%

\documentclass[aos,preprint]{imsart}

\RequirePackage[numbers]{natbib}
\bibliographystyle{imsart-number}

\usepackage[ansinew]{inputenc}
\usepackage{amssymb}
\usepackage{amsmath}
\usepackage{amsthm}
\usepackage{bm}
\usepackage{bbm}
\usepackage{mathrsfs}
\usepackage{enumerate}


\startlocaldefs
\input{math.sty}

\endlocaldefs

\begin{document}

\begin{frontmatter}

\title{Nonparametric Bayesian posterior contraction rates for discretely observed scalar diffusions}
\runtitle{Posterior contraction rates for diffusions}


\author{\fnms{Richard} \snm{Nickl}\ead[label=e1]{r.nickl@statslab.cam.ac.uk}}
\and
\author{\fnms{Jakob} \snm{S\"ohl}\ead[label=e2]{j.soehl@statslab.cam.ac.uk}}
\runauthor{R. Nickl and J. S\"ohl}

\affiliation{University of Cambridge}
\address{Statistical Laboratory\\
Department of
Pure Mathematics and Mathematical Statistics\\
University of Cambridge\\
CB3 0WB, Cambridge, UK\\
\printead{e1}\\
\printead{e2}}

\begin{abstract}
We consider nonparametric Bayesian inference in a reflected diffusion model 
$dX_t = b (X_t)dt + \sigma(X_t) dW_t,$ with discretely sampled observations 
$X_0, X_\Delta, \dots, X_{n\Delta}$. We analyse the nonlinear inverse problem 
corresponding to the `low frequency sampling' regime where $\Delta>0$ is fixed 
and $n \to \infty$. A general theorem is proved that gives conditions for prior 
distributions $\Pi$ on the diffusion coefficient $\sigma$ and the drift function 
$b$ that ensure minimax optimal contraction rates of the posterior distribution 
over H\"older-Sobolev smoothness classes. These 
conditions are verified for natural examples of nonparametric random wavelet series  
priors.  For the proofs we derive 
new concentration inequalities for empirical processes arising from discretely 
observed diffusions that are of independent interest.
\end{abstract}

\begin{keyword}[class=MSC]
\kwd[Primary ]{62G05}
\kwd[; secondary ]{60J60 62F15 62G20}
\end{keyword}

\begin{keyword}
\kwd{nonlinear inverse problem, Bayesian inference, diffusion model}
\end{keyword}

\end{frontmatter}

\section{Introduction}

Many fundamental models for dynamic stochastic 
phenomena in continuous time are based on the concept of a \textit{diffusion}, whose evolution is
modelled mathematically by $$dX_t = b (X_t)dt + \sigma(X_t) dW_t,~~ t\ge 0,$$ 
where $W_t$ is a standard Brownian motion. Diffusions occur naturally in the 
physical and biological sciences, in economics and elsewhere, and their deep 
relationship to stochastic and partial differential equations makes them a 
central object of study in modern mathematics. Various specifications of the 
\textit{drift function} $b$ and the \textit{diffusion coefficient} $\sigma$ lead 
to a flexible class of random continuous motions. In scientific 
applications, a key challenge is to recover the parameters $b, \sigma$ from some 
form of observations of the diffusion. Unless specific knowledge is available, 
the resulting statistical models for the parameters $\sigma, b$ and the 
probability laws $\mathbb P_{\sigma b}$ of the Markov process $(X_t: t \ge 0)$ 
are naturally \textit{infinite-dimensional} (=`nonparametric'). 

Statistical observations in the real world usually are collected in a discrete 
fashion, say in form of observed increments $X_0, X_\Delta, \dots, X_{n 
\Delta}$ 
of the diffusion, where $1/\Delta$ is the sampling frequency. We are interested 
in the possibly most realistic scenario where $\Delta>0$ is fixed and more 
information accrues in form of an increasing sampling horizon $n \Delta \to 
\infty$. As revealed in the seminal paper by Gobet, Hoffmann and 
Rei\ss~\cite{gobetETAL2004}, this `low 
frequency' sampling regime implies that inference on $(\sigma, b)$ constitutes 
a nonlinear nonparametric inverse problem, and the authors solve this problem in 
a minimax way by a delicate estimation technique based on ideas from 
spectral theory. 

Alternative methodology for nonparametric inference in diffusion models has been 
put forward 
recently, notably of a Bayesian flavour, see Roberts and Stramer
\cite{RobertsStramer2001}, Papaspiliopoulos et al.
\cite{PapaspiliopoulosPokernRobertsStuart2012}, Pokern et al.
\cite{PokernStuartvanZanten2013}, 
van der Meulen et al. \cite{vanderMeulenSchauervanZanten2014}, van Waaij 
and van Zanten
\cite{WaaijvanZanten2015} and references therein. 
While such Bayesian methods are attractive in applications 
\cite{GolightlyWilkinson2005}, \cite{Stuart2010}, 
\cite{vanZanten2013}, particularly since they provide associated 
uncertainty quantification procedures (`credible regions'), our understanding 
of their frequentist sampling performance is extremely limited. This is 
particularly so in the `low frequency' regime when $\Delta>0$ is thought to be 
fixed: the only references we are aware of are the consistency results in 
\cite{vanderMeulenvanZanten2013}, \cite{GugushviliSpreij2014},
\cite{KoskelaSpanoJenkins2015}, which only hold under the very restrictive 
assumption that $\sigma$ is 
constant and known, and only in a weak topology. As pointed out by 
Stuart \cite{Stuart2010} and van Zanten \cite{vanZanten2013}, 
obtaining theoretical performance guarantees for Bayesian algorithms in nonlinear inverse problems is, however, of key importance if such methods are to be 
used in scientific applications. Only very few rigorous results are currently available.

In this paper we give the first proof of the fact that nonparametric prior 
distributions on the diffusion parameters $(\sigma, b)$ give rise to posterior distributions that 
contract at the (minimax) frequentist optimal convergence rates over natural 
regularity classes, in the low frequency sampling regime. This is achieved by using the generic `testing 
approach' 
introduced in the landmark paper Ghosal et 
al. \cite{GhosalGhoshvanderVaart2000}, see also 
\cite{GhosalvanderVaart2007} and \cite{vanderVaartvanZanten2008}
-- but the adaptation to the diffusion case requires the resolution of two 
major mathematical obstacles to obtain satisfactory results:

$\bullet$ The \textit{`small ball probability conditions'} need to reflect the 
inverse problem nature of the discrete sampling scheme, and the resulting 
perturbation of the `information theoretic distance' (KL-divergence) associated 
to the statistical experiment needs to be precisely quantified. For linear 
inverse problems this has already been noted in the paper by 
Ray \cite{Ray2013}. 
In the nonlinear diffusion setting here, however, the situation is much more 
complicated, and requires a fine analysis of the inverse operator of the 
infinitesimal generator of the diffusion (which could be viewed as the 
linearisation of the non-linear inverse operator). In the case of high 
frequency 
($\Delta \to 0$) or continuous observations, small ball probabilities may be 
computed for an information distance closely related to the $L^2$-distance on 
$b$ and $\sigma$ (see \cite{vanderMeulenvanderVaartvanZanten2006}), but to 
obtain optimal results in the low frequency 
setting one has to show that instead small ball probabilities may be computed 
in 
a weaker norm -- precisely, as we show, in a certain negative order Besov norm. 
Simultaneously one has to ensure that the invariant measure $\mu$ is correctly 
modelled by the (induced) prior too -- note that the smoothness degree of $\mu$ 
is generally not identified by the regularity of $(\sigma, b)$. This last fact 
also should guide practitioners who often devise priors for $\sigma$ and $b$ 
without paying attention to the implied model for the invariant measure.

$\bullet$ The \textit{construction of frequentist tests} with sufficiently good 
exponential error bounds for type two errors in a large enough support set of 
the prior in \cite{GhosalGhoshvanderVaart2000, GhosalvanderVaart2007} relies on 
properties of the 
likelihood ratio test and the associated Hellinger distance between 
experiments. 
In the setting of diffusions this approach appears difficult to implement -- 
instead we use 
the `concentration of measure' approach of Gin\'e and Nickl 
\cite{GineNickl2011} to the 
construction of such tests. To do this we prove a Bernstein-type inequality for 
empirical processes driven by discretely sampled diffusions, relying on 
work of Adamczak \cite{Adamczak2008}, and use it to derive sharp concentration 
bounds 
for the estimators (and resulting plug-in tests) put forward in Gobet et al.
\cite{gobetETAL2004}. These concentration results, which are of independent 
interest, are derived in Section \ref{diffdet}.

\smallskip

We demonstrate that our general conditions are verified for natural 
nonparametric priors on $(\sigma, b)$. It is convenient to give a hierarchical 
prior specification that first models the inverse diffusion coefficient 
$\sigma^{-2}$, and then, conditional on $\sigma^2$, the drift function $b$, 
which explicitly generates a prior for the invariant measure $\mu$ too. The 
individual prior choices are quite flexible and allow for general random series priors, as we show, with one technically vital restriction that 
they are constrained to a fixed regularity class that ensures sufficient 
smoothness of the individual parameters. This is necessary to deduce various 
probabilistic properties of the diffusion that our proofs rely on. In 
particular, following \cite{gobetETAL2004}, we rely on the assumption that the 
diffusion considered is a reflected one, and hence lives in a compact interval 
of $\mathbb R$. This corresponds to the usual von Neumann boundary conditions 
required for the infinitesimal generator $L$ to be injective and to have a 
discrete spectrum. As a consequence, to cope with these boundary conditions, we 
model $b$ and $\sigma$ only in the interior of the given interval (this also 
has 
some deeper mathematical reasons since the second eigenfunction of $L$ 
identifies 
$b,\sigma$ only in the interior of the domain, and since our approach to 
construct tests is based on first estimating this eigenfunction). Again, these 
are concessions to the mathematical intricacies of the problem at hand, and 
further hard work will be required to alleviate those. We discuss possible 
extensions and limitations of our approach in Subsection \ref{outlook} below.

\section{Main results}

\subsection{A nonparametric model for diffusions on $[0,1]$}

Consider a scalar diffusion process $(X_t: t \ge 0)$ on $[0,1]$ starting at 
$X_0=x_0$, and whose evolution is described by the stochastic differential 
equation (SDE)
\begin{equation} \label{diffm}
 \d X_t=b(X_t)\d t+\sigma(X_t)\d W_t, ~~~ t \ge 0,
\end{equation}
where the process is reflected at the boundary points $\{0,1\}$ (for a precise 
definition see Section \ref{diffdet} below). For the pair $\vartheta = 
(\sigma,b) \in C([0,1]) \times C([0,1])$ we maintain the following model
\begin{align*}
 \Theta:=\Big\{&\vartheta = (\sigma,b): b(0)=b(1)=\sigma'(0)=\sigma'(1)=0,
\text{ $b', \sigma', \sigma''$ exist,} \\ 
&~~ \max\left(\|b\|_\infty, \|b'\|_\infty, 
\|\sigma\|_\infty, 
\|\sigma'\|_\infty, \|\sigma''\|_\infty \right)\le D, \inf_{x \in 
[0,1]}\sigma^2(x) \ge d \Big\}
\end{align*}
where $D,d$ are arbitrary fixed positive constants. For $(\sigma, b) \in 
\Theta$ 
the SDE (\ref{diffm}) has a pathwise solution described by the Markov process 
$(X_t: t \ge 0)$ with invariant measure $\mu=\mu_{\sigma b}$, whose law we denote by $\mathbb 
P_{\sigma b}$ whenever $X_0 \sim \mu$. The observation scheme considered is such that increments $X_0,X_{\Delta},\dots,X_{n\Delta}$ are sampled 
at distance $\Delta>0$, and we study statistical inference on 
$\sigma$ and $b$ when $n$ tends to infinity as $\Delta>0$ remains fixed (the 
`low frequency sampling' regime). Thus the $X_{i\Delta}$'s form an ergodic 
Markov chain and we write $p_{\sigma b}(\Delta, x,y)$ for 
the associated transition probability density functions with respect to Lebesgue measure $\d y$. We shall also - in abuse of notation - write $\mu$ both for the invariant measure and its density.

Let $\Pi$ be a (prior) probability distribution on some $\sigma$-field $\mathcal S$ of 
subsets 
of $\Theta$, and given $(\sigma,b) \sim \Pi$ assume that the law of $(X_t: t 
\ge 
0)|(\sigma, b)$ is described by the diffusion (\ref{diffm}) started in the invariant measure $\mu_{\sigma b}$. If the mapping 
$(\sigma, b) \mapsto p_{\sigma b}(\Delta, x,y)$ is $\mathcal S$-$\mathcal 
B_\mathbb R$ measurable for all $x,y$, then by standard arguments (as in 
Chapter 7.3 in \cite{GineNickl2015}) the posterior distribution given the discrete 
sample from the diffusion is 
\begin{equation} \label{post}
(\sigma,b)|X_0, X_\Delta, \dots, X_{n\Delta} \sim 
\frac{\mu_{\sigma b}(X_0) \Pi_{i=1}^np_{\sigma b}(\Delta, X_{(i-1)\Delta}, X_{i 
\Delta}) d\Pi((\sigma, b))}{\int_\Theta \mu_{\sigma b}(X_0)  
\Pi_{i=1}^np_{\sigma b}(\Delta, X_{(i-1)\Delta}, X_{i \Delta}) d\Pi((\sigma, 
b))}.
\end{equation}
 
 We wish to devise natural conditions on the prior $\Pi$ that imply that 
the posterior distribution contracts at the optimal convergence rate $\delta_n$ in some distance function $d$ about any 
fixed `true' parameter pair $\vartheta_0=(\sigma_0, b_0) \in \Theta$. More precisely, we wish to prove that, as $n \to \infty$, $$\Pi(\vartheta: d(\vartheta_0, \vartheta)>\delta_n|X_0, X_\Delta, \dots, X_{n\Delta}) \to 0,~  \text{ in } \mathbb P_{\sigma_0 b_0} \text{- probability},$$ under the `frequentist' assumption that $(X_t: t \ge 0) \sim \mathbb P_{\sigma_0 b_0}$. The rate $\delta_n$ will depend on regularity properties of $(\sigma_0, b_0)$ that we describe now.

\subsection{Contraction Theorem}

In \cite{gobetETAL2004} it was shown that the frequentist minimax rates for 
estimating the parameter $(\sigma, b)$ in $L^2([A,B])$-loss ($0<A<B<1$) are 
given by 
\begin{equation} \label{minrat}
n^{-s/(2s+3)}~~\text{for } \sigma^2 ~~\text{and } n^{-(s-1)/(2s+3)}~~\text{for 
} b
\end{equation}
whenever $(\sigma, b) \in \Theta_s$, where the regularity classes $\Theta_s \subset \Theta$ are defined as
\begin{align*}
 \Theta_s:=\left\{\vartheta=(\sigma,b)\in\Theta:\|\sigma\|_{H^s}\le D, \|b\|_{H^{s-1}}\le D\right\},~~s\ge 2,
\end{align*}
with $H^s$ the usual $L^2$-Sobolev space over $[0,1]$. This particular coupling of the regularities $s$ of $\sigma$ and $s-1$ of $b$ is natural, see Remark \ref{coupling} below.

The above rates reflect the recovery complexity of an ill-posed problem of 
order 
one and two, respectively. For the Bayesian posterior distribution to have good 
frequentist properties it is well known (\cite{GhosalGhoshvanderVaart2000}) 
that 
the prior should charge small neighbourhoods of the `true pair' $(\sigma_0, 
b_0)$ with sufficient probability, where `neighbourhood' is understood with 
respect to the information distance induced by the observations. As noted by 
\cite{Ray2013} for linear inverse problems $$Y=Af +\epsilon,~~~A: L^2 \to L^2 \textit{ linear},$$ with unknown parameter $f \in L^2$ and Gaussian white noise 
$\epsilon$, a key point is to take advantage 
of the fact that the usual information distance $\|f\|_{L^2}$ (when $A=Id$) is transformed to 
$\|Af\|_{L^2}$, which typically corresponds to a negative (or dual) Sobolev 
norm $\|f\|_{H^{-w}}= \|f\|_{(H^w)^*}$ induced by the eigen-basis of $A$, and 
where $w$ denotes the level of ill-posedness.

One of the key contributions of this paper is to produce a similar result in 
our non-linear and non-Gaussian setting of discretely sampling a diffusion. To obtain sharp results 
the Hilbert scale of Sobolev norms will have to be replaced by more flexible 
Besov norms. To this end, for $s>0$ denote by $(B^s_{1\infty})^*$ the dual 
space of the Besov space  $B^s_{1\infty}=B^s_{1\infty}([0,1])$, equipped with the usual dual norm (see (\ref{dualnorm}) below). We refer to \cite{M92, triebel2010} or Chapter~4.3 in 
\cite{GineNickl2015} 
for the usual definitions and basic properties of Besov spaces. We further note 
that any prior distribution on $(\sigma, b)$ induces a prior distribution on 
the invariant measure $\mu$ of the diffusion (see (\ref{invid}) below), 
and the following result implicitly requires this induced prior to correctly 
model the parameter $\mu$ too. See Remark \ref{coupling} for discussion.

\smallskip

In what follows, for two sequences $(a_m: m \in \mathbb N), (b_m: m \in \mathbb N)$, we write $a_m \lesssim b_m$ whenever $a_m \le C b_m$ for all $m \in \mathbb N$ and some fixed constant $C>0$, and we write $a_m \simeq b_m$ whenever both $a_m \lesssim b_m$ and $b_m \lesssim a_m$ hold.

\begin{theorem}\label{thm:main}
Let $\Pi=\Pi_n$ be a sequence of prior distributions on $\Theta$, suppose that 
$X_0,\dots ,X_{n\Delta}$ are discrete observations of a 
diffusion process (\ref{diffm}) started in the 
stationary distribution $\mu$, and let $\Pi(\cdot|X_0,\dots,X_{n\Delta})$ be 
the resulting posterior distribution (\ref{post}) on $\Theta$.

Assume $\Pi$ satisfies for some 
$(\sigma_0, b_0) \in \Theta_s$ and $\mu_0 \in L^2$, that \\
 \emph{(i)} $\Pi(\Theta_s)=1$ for some $s \ge 2$ (and for some $d>0, D>0$),\\
  \emph{(ii)} there exists a constant $C>0$ and a sequence $\eps_n$ satisfying 
$$n^{-(s+1)/(2s+3)} \lesssim \eps_n \lesssim n^{-3/8} (\log n)^{-1/2}$$ such that for all $n$ large enough
\begin{align}\label{eq:smallball}
 \Pi\left((\sigma,b)\in\Theta: 
\|\mu-\mu_0\|_{L^2([0,1])}+
\|\sigma^{-2}-\sigma_0^{-2}\|_{(B^1_{1\infty})^*}
+\|b-b_0\|_{(B^2_{1\infty})^*}<\eps_n\right) \notag \\ \ge e^{-Cn\eps_n^2}.
\end{align}
Then if $(X_t: t \ge 0) \sim \mathbb P_{\sigma_0 b_0}$ where $X_0 \sim 
\mu_0$, for true parameters $(\sigma_0, b_0)$ with associated 
invariant measure $\mu_0=\mu_{\sigma_0 b_0}$, the posterior distribution contracts about 
$(\sigma^2_0, b_0)$ in $L^2\equiv L^2([A,B])$ for any $0<A<B<1$, at rates $$\delta_n\equiv 
n\varepsilon_n^3 ~~and~ 
\delta_n' \equiv n^2 \varepsilon_n^5;$$ that is, for some fixed constant $M$, 
as 
$n \to \infty$ and in $\PP_{\sigma_0 b_0}$-probability,
\begin{align*}
\Pi\left((\sigma,b):\|\sigma^2-\sigma_0^2\|_{L^2}>M\delta_n\text{ 
or }\|b-b_0\|_{L^2}>M\delta_n'|X_0,\dots,X_{n\Delta}
\right) \to0.
\end{align*}
\end{theorem}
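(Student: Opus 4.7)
I would follow the generalised testing paradigm of Ghosal, Ghosh and van der Vaart~\cite{GhosalGhoshvanderVaart2000}, in the form adapted by Ghosal and van der Vaart~\cite{GhosalvanderVaart2007} to stationary ergodic Markov chain observations. Three ingredients are to be supplied: (a) a prior mass bound of the form $e^{-Cn\varepsilon_n^2}$ on a Kullback--Leibler type neighbourhood of $\vartheta_0$ for the chain likelihood; (b) a sieve $\Theta_n \subseteq \Theta$ of $\delta_n$-metric entropy at most $Cn\varepsilon_n^2$ carrying all but $e^{-(C+4)n\varepsilon_n^2}$ of the prior mass; (c) tests of $\{\vartheta_0\}$ against the alternative $\{\|\sigma^2 - \sigma_0^2\|_{L^2} > M\delta_n\} \cup \{\|b - b_0\|_{L^2} > M\delta_n'\}$ with type-I and type-II errors of order $e^{-cn\varepsilon_n^2}$, uniformly over the sieve.

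\textbf{Step 1: small-ball condition in KL.} The substantive issue is to show that the negative-Besov small-ball appearing in (\ref{eq:smallball}) is contained in a KL ball of the correct size for the joint law of the chain. By stationarity of the chain under $\PP_{\sigma_0 b_0}$,
\[
\KL\bigl(\PP^{(n)}_{\sigma_0 b_0}, \PP^{(n)}_{\sigma b}\bigr) = \KL(\mu_0,\mu) + n\,\E_{\mu_0}\!\bigl[\KL(p_{\sigma_0 b_0}(\Delta, X_0, \cdot), p_{\sigma b}(\Delta, X_0, \cdot))\bigr],
\]
and, as $\mu_0$ is bounded above and below, the first term is of order $\|\mu - \mu_0\|_{L^2}^2$. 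For the per-step term I would run a perturbation analysis of the reflected semigroup $P_\Delta^\vartheta = e^{\Delta L_\vartheta}$: writing $L_\vartheta - L_{\vartheta_0} = \tfrac12(\sigma^2-\sigma_0^2)\partial_{xx} + (b-b_0)\partial_x$ and using Duhamel's formula together with the infinite-order smoothing of $P_\Delta^\vartheta$ under Neumann boundary conditions, one expects a bound of the form $\|P_\Delta^{\sigma b} - P_\Delta^{\sigma_0 b_0}\|_{L^2 \to L^2} \lesssim \|\sigma^{-2} - \sigma_0^{-2}\|_{(B^1_{1\infty})^*} + \|b - b_0\|_{(B^2_{1\infty})^*}$. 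A second-order expansion of the KL functional then converts this into the desired quadratic KL estimate, and hypothesis (\ref{eq:smallball}) yields (a). The loss of one derivative for the drift (first-order part of $L$) and two for the diffusion coefficient (second-order part) is exactly what dictates the two dual Besov regularities in the hypothesis.

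\textbf{Steps 2 and 3: sieves and tests.} Since $\Pi(\Theta_s)=1$ by hypothesis~(i), the sieve $\Theta_n = \Theta_s$ is automatic, and the compact embedding $H^s \hookrightarrow L^2$ yields $\log N(\eta, \Theta_s, \|\cdot\|_{L^2}) \lesssim \eta^{-1/s}$, comfortably below $n\varepsilon_n^2$ at $\eta \asymp \delta_n$. For the tests I would adopt the concentration-of-measure strategy of Gin\'e and Nickl~\cite{GineNickl2011}: plug-in tests $\psi_n = \1\{\|\hat\sigma_n^2 - \sigma_0^2\|_{L^2} > M\delta_n/2 \text{ or } \|\hat b_n - b_0\|_{L^2} > M\delta_n'/2\}$, where $\hat\sigma_n^2, \hat b_n$ are the wavelet-spectral estimators of Gobet, Hoffmann and Rei\ss~\cite{gobetETAL2004}. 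The Bernstein-type empirical process inequality for discretely sampled diffusions proved in Section~\ref{diffdet} supplies Gaussian-type tail control of $\hat\sigma_n^2 - \E[\hat\sigma_n^2]$ and $\hat b_n - \E[\hat b_n]$ at the scales $\delta_n$ and $\delta_n'$; the bias is of the same order uniformly over $\Theta_s$ by standard approximation. The uniform type-II bound over the sieve follows by the triangle inequality.

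\textbf{Main obstacle.} The delicate core of the argument is Step~1: the quantitative perturbation analysis of the reflected semigroup linking the weak negative-Besov small-ball to an honest KL neighbourhood at scale $\varepsilon_n^2$. This requires careful handling of the Neumann boundary conditions (which force the restrictions $b(0)=b(1)=\sigma'(0)=\sigma'(1)=0$ encoded in $\Theta$), uniform control of the spectral gap of $L_\vartheta$ across $\Theta_s$, and exploitation of the $H^s$-regularity of $\sigma_0, b_0$ to absorb the derivatives traded in passing from the $L^2$ to the negative Besov scale. The precise coupling $\delta_n = n\varepsilon_n^3$ and $\delta_n' = n^2 \varepsilon_n^5$ of the final contraction rates to $\varepsilon_n$ arises because the concentration scale of the spectral plug-in tests in Step~3 is governed by the minimax variance of the inverse problem, not by a naive Hellinger distance, and thus inherits the two distinct orders of ill-posedness for $\sigma^2$ and $b$.
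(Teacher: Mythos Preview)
Your overall strategy matches the paper's: a general contraction theorem in the GGV style (Theorem~\ref{thm:contract}), a small-ball lemma translating the Besov-type neighbourhood into a KL neighbourhood (Lemma~\ref{suplem}), and plug-in tests based on the spectral estimators of \cite{gobetETAL2004} with concentration supplied by Section~\ref{diffdet} (Theorems~\ref{thm:conest} and~\ref{thm:tests}). Steps~2 and~3 essentially agree with the paper; in fact the entropy bound you invoke is not needed, since the paper's contraction theorem bypasses entropy entirely by assuming tests directly, and $\mathcal B_n=\Theta_s$ trivially has full prior mass by hypothesis~(i).

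Step~1 is where you diverge, and as written it contains a gap. You propose Duhamel's formula for $P_\Delta^{\sigma b}-P_\Delta^{\sigma_0 b_0}$ and an operator-norm bound $\|P_\Delta^{\sigma b}-P_\Delta^{\sigma_0 b_0}\|_{L^2\to L^2}$; but the KL divergence is controlled (after the usual Hellinger comparison) by the $L^2$-distance of the transition \emph{densities}, i.e.\ the Hilbert--Schmidt norm of the difference, not the operator norm, so the ``second-order expansion of the KL functional'' you invoke does not follow from the bound you state. The paper takes a different route that avoids the Duhamel time-integral altogether: it restricts to $V=\{f:\int f\,d\mu_0=0\}$, writes $P_\Delta|_V=f\bigl((L|_V)^{-1}\bigr)$ with $f(z)=e^{\Delta/z}$ Lipschitz on $(-\infty,0)$, and applies a Kittaneh-type commutator estimate for functions of self-adjoint operators (Proposition~\ref{prop:kitt}) to obtain
\[
\bigl\|(P_\Delta^{\sigma b}-\mu_0(P_\Delta^{\sigma b}))|_V-P_\Delta^{\sigma_0 b_0}|_V\bigr\|_{HS}\lesssim\bigl\|(L_{\sigma b}-\mu_0(L_{\sigma b}))|_V^{-1}-L_{\sigma_0 b_0}|_V^{-1}\bigr\|_{HS}.
\]
The right-hand side is then computed from an \emph{explicit} Green's kernel for the inverse generator (Subsection~\ref{invsec}, Lemma~\ref{lem:K}), and it is at that stage, by direct manipulation of the kernel via bounded-variation and pointwise-multiplier arguments, that the dual Besov norms $\|\sigma^{-2}-\sigma_0^{-2}\|_{(B^1_{1\infty})^*}$ and $\|b-b_0\|_{(B^2_{1\infty})^*}$ emerge. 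Passing to the \emph{inverse} generator is the device that turns a second-order differential operator into an integral operator with a tractable kernel, sidestepping both the HS-versus-operator-norm issue and any short-time singularity in a Duhamel integral.
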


\begin{remark} \label{opt} \normalfont \textit{Optimal rates.}
The optimal choice of $\varepsilon_n$ is of the order $$\varepsilon_n \simeq n^{-(s+1)/(2s+3)}$$ in which case 
$\varepsilon_n = O(n^{-3/8} (\log n)^{-1/2})$ is always satisfied since $s \ge 2$, and the 
resulting contraction rates are of the desired minimax order: $$\delta_n \simeq 
n^{-s/(2s+3)}, ~~~\delta_n' \simeq n^{-(s-1)/(2s+3)}.$$ 
\end{remark}

\subsection{Examples of Prior distributions}

\subsubsection{Some preliminaries on function spaces}

We now show how Theorem~\ref{thm:main} applies to some concrete prior 
distributions. To do so we need to define the following H\"older-type function 
spaces $\mathcal C^t$:
\begin{definition}
For $t>0$ and $\lfloor t\rfloor$ the largest integer $k \le t$ 
we define
\begin{align*}
\mathcal C^t([0,1])&:=\left\{f\in C([0,1]): |||f|||_{\mathcal C^t} < \infty 
\right\},\quad\text{where}\\
|||f|||_{\mathcal C^t}&:=\sum_{k=0}^{\lfloor t \rfloor}\|D^k f\|_\infty
+\sup_{0 \le x < x+h \le 1}\frac{|D^{\lfloor 
t\rfloor}f(x+h)-D^{\lfloor t\rfloor}f(x)|}{ h^{t-\lfloor 
t\rfloor}\log(1/h)^{-2} 
}.
\end{align*}
\end{definition}
The additional logarithmic factor in the H\"older condition is convenient as 
then $$f \in \mathcal C^t  \Rightarrow f \in H^t \cap B^t_{\infty 1}$$ follows, 
which allows to combine knowledge of spectral properties of the diffusion 
expressed in terms of Sobolev $H^t$-norms with wavelet characterisations of 
H\"older and Besov spaces. Note that the continuous imbeddings of $\mathcal 
C^t$ 
into $H^t$ and into $B^t_{\infty 1}$ follow easily from wavelet 
characterisations of the norms of these spaces (see \cite{M92} or \cite{GineNickl2015}, 
Chapter 4.3, where we note that $H^t=B^t_{22}$), and these also imply that 
$B^t_{\infty 1}, t \in \mathbb N,$ is continuously imbedded into the classical 
spaces $C^t$ of $t$-times continuously differentiable functions.

In fact we will work with the equivalent wavelet norm of $\mathcal C^t$ given by
\begin{equation} \label{wavnorm}
\|f\|_{\mathcal C^t} \equiv \|f\|_{\mathcal C^{t,2}},~~~\|f\|_{\mathcal 
C^{t,\gamma}} = \sup_{l,k} 2^{l(t+1/2)} l^\gamma|\langle f, \psi_{lk} \rangle|,
\end{equation}
where $\{\psi_{lk}: k =0, \dots 2^l-1, l \ge J_0-1\}, J_0 \ge 2, J_0 \in \mathbb N,$ 
is a sufficiently regular boundary-adapted Daubechies wavelet basis of 
$L^2([0,1])$ that also generates $H^t, \mathcal C^t$ as well as the Besov spaces 
$B^t_{pq}$ (see Chapter 4.3.5 in \cite{GineNickl2015}). 

We shall assume that the true pair $(\sigma_0, b_0)$ lies in $\Theta_s \cap (
\mathcal C^s \times \mathcal C^{s-1})$. Moreover, to avoid tedious technicalities about boundary 
conditions, we assume that $b_0$ is supported in the interior of $[0,1]$, and 
that $\log \sigma_0^{-2}$ and $\log \mu_0$ have expansions into wavelet series 
supported in the interior of $[0,1]$. Such functions can be modelled by 
infinite 
Daubechies wavelet series $\psi_{lk}$ that are supported in a given fixed 
interval $[A,B]$. The minimax estimation rates over functions satisfying these 
constraints are the same as those in (\ref{minrat}) up to $\log n$ factors (see Remark \ref{minrem} below), and the minor loss in generality comes at the gain of 
substantial technical simplifications. Thus we adopt the following 
condition, formulated in terms of $\mu_0$ and $\sigma_0$ (implicitly 
defining $b_0$).

\begin{assumption}\label{decayassumpt}
For $0<A<B<1$ given, let $\mathcal I$ be the maximal set of double indices 
$(l,k)$ such that the Daubechies wavelet functions $\psi_{lk}, (l,k) \in 
\mathcal I$, are all supported in $[A,B]$.

We assume that the invariant density $\mu_0 \in \mathcal C^{s+1}$ is of the form
$$\log \mu_0(x)=\sum_{l,k \in \mathcal I}\beta_{lk} \psi_ {lk} (x),~~x \in 
[0,1],$$ 
with $2^{l(s+3/2)}l^2|\beta_{lk}|\le B$ for some $B>0$.

We further assume that the diffusion coefficient $\sigma_0 \in \mathcal C^s$ 
has 
the form 
$$\log \sigma_0^{-2}(x)=\sum_{l, k \in \mathcal I} \tau_{lk} \psi_ {lk} (x), 
~~x 
\in [0,1],$$ where
$2^{l(s+1/2)}l^2|\tau_{lk}|\le B$. 
\end{assumption}

Note that the assumptions ensure that $\sigma_0^{-2}$ and $\mu_0$ are bounded and bounded 
away from zero on $[0,1]$, and that as a consequence, so is $\sigma_0^2 \in 
\mathcal C^s$. It also implies that $2b_0 =  (\sigma_0^2 \mu_0)'/\mu_0$ is 
contained in $\mathcal C^{s-1}$ and supported in $[A,B]$ (since $\sigma_0^2 
\mu_0$ is constant outside of that interval). 

\begin{remark} \label{minrem} \normalfont \textit{[Minimax rates over $\mathcal C^s$-classes.]}
Assumption \ref{decayassumpt} is restricting $(\sigma_0, b_0)$ beyond having to 
lie in $\Theta_s$. The lower bound proofs in \cite{gobetETAL2004} imply that 
these further restrictions do not change the minimax rates, except for a $(\log 
n)^\gamma, \gamma>0,$ factor induced by the weighting with the factor $l^2$ in the wavelet 
norm. Note that the lower bound in \cite{gobetETAL2004} is also based on 
wavelets that are supported in the interior of $[0,1]$, and works with constant 
invariant density $\mu_0=1 \in \mathcal C^{s+1}$, which means that $2b$ just equals $(\sigma^2)'$.
\end{remark}

\subsubsection{Random wavelet series prior}

We consider the following hierarchical prior specification. Let $s \ge 2$. Wavelet coefficients will be constructed from random variables drawn i.i.d.~from probability density 
\begin{equation}\label{density}
\phi:[-\tilde B,\tilde B] \to [0,\infty), ~ B\le \tilde B < \infty, ~\inf_{x \in [-B,B]}\phi (x)\ge\zeta, ~\zeta>0.
\end{equation}
This in particular includes the cases where $\phi$ is the density of a uniform $U[-B,B]$ random variable (so that $\zeta=1/(2B)$), or the case where $\phi$ equals the density of the truncated normal distribution given by $\phi(x)\simeq e^{-x^2/2}$ for $x\in[-B,B]$ and $\phi(x)=0$ otherwise, (so that $\zeta \ge ({2\pi})^{-1/2}e^{-B^2/2}$).

\smallskip

We first model $\log \sigma^{-2}$ as a wavelet series, that is,
\begin{align*}
\sigma^{-2}(x)&= \exp\left\{\sum_{l,k\in\mathcal I, l\le L_n}2^{-l(s+1/2)}l^{-2}
u_ { lk } \psi_ {lk} (x) \right\},~~x \in [0,1],
\end{align*}
where $u_{lk}$ are drawn i.i.d.~from density $\phi$ satisfying (\ref{density}). Here we can take $L_n = \infty$ (so that the prior is independent of $n$) but in our result below we also allow for $L_n$ to equal a sequence of integers diverging with $n$. 

\smallskip

Conditional on $\sigma$ we use the identity
\begin{equation} \label{hierid}
2b = \frac{(\sigma^2 \mu)'}{\mu} = (\sigma^2)' + \sigma^2 (\log \mu)',
\end{equation}
and the law of $b|\sigma^2$ is modelled by taking a wavelet prior $$H(x)=\sum_{l,k\in\mathcal I, l\le \bar L_n}2^{-l(s+3/2)}l^{-2}\bar u_{lk} \psi_ {lk} (x),~x \in [0,1], ~\bar L_n \in \mathbb N \cup \{\infty\},$$ and 
the resulting prior $e^H/\int e^H$ on the parameter $\mu$, where the $\bar 
u_{lk}$ are drawn i.i.d.~from density $\bar \phi$ satisfying (\ref{density}), independent of the $u_{lk}$'s from above. Concretely $$b|\sigma^2 = ((\sigma^2)' + \sigma^2 H')/2,$$ and the resulting prior distribution induced on $(\sigma^2, b)= (\sigma^2, ((\sigma^2)' 
+ \sigma^2 H')/2)$ is denoted by $\Pi=\Pi_{L_n, \bar L_n}$.

\begin{proposition} \label{uniform}
Let $\sigma_0, \mu_0$ satisfy Assumption~\ref{decayassumpt} for some $s\ge 2,B>0$ and choose $$\eps_n=n^{-(s+1)/(2s+3)} (\log n)^\eta, ~~\eta= \frac{s-1}{2s+3}.$$ Let $\Pi=\Pi_{L_n, \bar L}$ be the preceding prior and, if $l_n = \min(L_n, \bar L_n)<\infty$, assume $2^{-l_n(s+1)}\lesssim\eps_n$. Then $\Pi$ satisfies the hypotheses of Theorem~\ref{thm:main} 
for this choice of $\eps_n$, all $D$ large and all $d>0$ small enough. As a consequence 
the resulting posterior distribution contracts about the true parameter 
$(\sigma_0^2, b_0)$ at the minimax optimal rate within $\log n$ factors 
(Remark~\ref{opt}).
\end{proposition}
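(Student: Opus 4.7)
The plan is to verify the two hypotheses of Theorem~\ref{thm:main}.

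For the support condition (i), the a priori bounds $|u_{lk}|,|\bar u_{lk}|\le\tilde B$ combined with the wavelet decay factors yield $\|\log\sigma^{-2}\|_{\mathcal C^s}, \|H\|_{\mathcal C^{s+1}}\lesssim\tilde B$ via~(\ref{wavnorm}). Since $\exp$ and reciprocation act smoothly on uniformly bounded (and bounded-away-from-zero) elements of $\mathcal C^t$, one obtains $\sigma^{\pm 2}\in \mathcal C^s$ and $\mu\in \mathcal C^{s+1}$ uniformly under the prior, whence $b=((\sigma^2)'+\sigma^2 H')/2\in\mathcal C^{s-1}$. The embeddings $\mathcal C^t\hookrightarrow H^t$ provide the Sobolev bounds, and the restriction of wavelet indices to $\mathcal I\subset(0,1)$ gives the boundary conditions; choosing $D$ large and $d$ small therefore forces $\Pi(\Theta_s)=1$.

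For the small ball condition (ii), write $g:=\log\sigma^{-2}-\log\sigma_0^{-2}$ and $G:=H-\log\mu_0$; both lie uniformly in $\mathcal C^s$ on the prior support. Continuity of multiplication by smooth bounded functions (like $\sigma_0^{\pm 2}$, $H'$, $\sigma_0^2$) on the Besov duals, continuity of $\partial:(B^1_{1\infty})^*\to(B^2_{1\infty})^*$ (dual to $\partial:B^2_{1\infty}\to B^1_{1\infty}$), the Taylor estimate $e^g-1=g+O(g^2)$ with $\|g^2\|_{(B^1_{1\infty})^*}\lesssim\|g\|_{(B^1_{1\infty})^*}\|g\|_{\mathcal C^s}$, together with the identity $2(b-b_0)=(\sigma^2-\sigma_0^2)'+(\sigma^2-\sigma_0^2)H'+\sigma_0^2(H-\log\mu_0)'$ and the embeddings $B^{-1}_{\infty 1}\hookrightarrow(B^1_{1\infty})^*$ and $L^2\hookrightarrow(B^1_{1\infty})^*$, reduce the three-term small ball event to the product event $\{\|g\|_{B^{-1}_{\infty 1}}<c\eps_n\}\cap\{\|G\|_{L^2}<c\eps_n\}$, which factors under the hierarchical prior. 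Set $u^*_{lk}:=2^{l(s+1/2)}l^2\tau_{lk}$ and $\bar u^*_{lk}:=2^{l(s+3/2)}l^2\beta_{lk}$, both in $[-B,B]$ by Assumption~\ref{decayassumpt}. Take $J_n$ with $2^{J_n}\simeq\eps_n^{-1/(s+1)}$: the tails at $l>J_n$, and the truncation gap at $l>\min(L_n,\bar L_n)$ handled by $2^{-l_n(s+1)}\lesssim\eps_n$, contribute only $O(\eps_n)$ to both norms. Prescribe level-dependent thresholds
$$\eta^u_l:=c\eps_n 2^{l(s+1)}l^2/J_n,\qquad \eta^{\bar u}_l:=c\eps_n 2^{l(s+1)}l^2/\sqrt{J_n},$$
calibrated so that the low-frequency contributions to $\sum_l 2^{-l/2}\|g_l\|_\infty$ and to $\|G\|_{L^2}^2=\sum_{lk}G_{lk}^2$ are $\lesssim\eps_n$ and $\lesssim\eps_n^2$ respectively. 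By independence and $\phi,\bar\phi\ge\zeta$ on $[-B,B]$,
$$-\log\Pi(\text{small ball})\lesssim \sum_{l\le J_n} 2^l\bigl[\log(1/\eps_n)-l(s+1)\log 2+O(\log J_n)\bigr].$$
The key cancellation is that with $J_n(s+1)\log 2\simeq\log(1/\eps_n)$ the two leading geometric sums $\sum 2^l\log(1/\eps_n)\simeq 2^{J_n+1}\log(1/\eps_n)$ and $\sum 2^l\cdot l(s+1)\log 2\simeq J_n\cdot 2^{J_n+1}(s+1)\log 2$ are of the same order and cancel, leaving
$$-\log\Pi\lesssim 2^{J_n}\log J_n\simeq\eps_n^{-1/(s+1)}\log\log(1/\eps_n).$$
For $\eps_n=n^{-(s+1)/(2s+3)}(\log n)^{(s-1)/(2s+3)}$ this is $\le Cn\eps_n^2$, since $\log\log n\lesssim(\log n)^{(s-1)/(s+1)}$, establishing (ii).

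The main obstacle is this refined small ball estimate. A uniform threshold $\eta_l\equiv c\eps_n$ would give only $-\log\Pi\lesssim\eps_n^{-1/(s+1)}\log(1/\eps_n)$, forcing a log exponent $\ge(s+1)/(2s+3)$ in $\eps_n$, whereas the sharper choice $(s-1)/(2s+3)$ requires the level-dependent thresholds and the above cancellation. That cancellation reflects the exact matching between the prior's wavelet decay rates $2^{-l(s+1/2)}l^{-2}$ and $2^{-l(s+3/2)}l^{-2}$ and the smoothness $s$ of the true $(\sigma_0,\mu_0)$, and is the crux of why the random wavelet series prior attains the minimax rate (up to log factors).
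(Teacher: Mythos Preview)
Your verification of hypothesis (i) and the reduction of the three-term small ball to a product event are essentially the paper's argument (the latter is packaged there as a lemma giving $\|b-b_0\|_{(B^2_{1\infty})^*}\lesssim\|\mu-\mu_0\|_{L^2}+\|\sigma^{-2}-\sigma_0^{-2}\|_{(B^1_{1\infty})^*}$, together with $\|e^f-e^g\|_{(B^1_{1\infty})^*}\lesssim\|f-g\|_{(B^1_{1\infty})^*}$ and $\|\cdot\|_{(B^1_{1\infty})^*}\le\|\cdot\|_{B^{-1}_{\infty 1}}$; the paper controls $\|\mu-\mu_0\|_{L^2}$ via $\|H-H_0\|_\infty$ rather than your $\|G\|_{L^2}$, but either works).

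The substantive divergence is the small ball probability bound. You introduce level-dependent thresholds $\eta^u_l\propto\eps_n\,2^{l(s+1)}l^2/J_n$ and argue via a ``cancellation'' between $\sum_l 2^l\log(1/\eps_n)$ and $(s+1)\log 2\sum_l l\,2^l$. This can be made rigorous --- one must replace your displayed sum by $\sum_l 2^l\max\{0,-\log(\zeta\eta^u_l)\}$, since several of your summands are negative while each $-\log\PP\ge 0$; the ``cancellation'' is then really the observation that the constraint at level $l$ becomes trivial for $l$ near $J_n$ --- and it does yield $-\log\Pi\lesssim 2^{J_n}\log J_n$. The paper, however, takes a much shorter route: it uses a \emph{uniform} threshold $|t_{lk}-u_{lk}|<c'\eps_n$, but chooses the truncation level so that $2^{-J(s+1)}J^{-2}\simeq\eps_n$ rather than your $2^{-J_n(s+1)}\simeq\eps_n$. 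The extra $l^{-2}$ factor in the prior's wavelet decay --- which you absorb into $\eta^u_l$ but discard when choosing $J_n$ --- then shrinks $2^J$ by a factor $J^{2/(s+1)}\simeq(\log n)^{2/(s+1)}$, and the crude bound $-\log\Pi\lesssim 2^J\log(1/\eps_n)\simeq(\log n)^{(s-1)/(s+1)}\eps_n^{-1/(s+1)}$ is already $\le Cn\eps_n^2$ exactly when $\eta\ge(s-1)/(2s+3)$. So your assertion that a uniform threshold ``would force $\eta\ge(s+1)/(2s+3)$'' is \emph{incorrect}: both routes exploit the $l^{-2}$ weighting to reach the stated $\eta$, just at different stages, and the paper's is the direct one.
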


\begin{remark} \label{coupling} \normalfont \textit{Coupling of smoothness 
indices and ill-posed inverse problems.}
In contrast to estimation of $\sigma^2$ and $b$, estimation of $\mu$ is not ill-posed and possible 
at the standard nonparametric rate $n^{-\alpha/(2\alpha+1)}$, when $\mu$ is 
$\alpha$-smooth. These estimation problems are, however, interacting with each 
other. On the one hand $\sigma, b$ and $\mu$ are closely related by classical 
identities (e.g., (\ref{hierid}) and (\ref{invid})), making it natural that 
$\sigma$ is modelled one degree smoother than $b$. On the other hand the 
smoothness of $\mu$ is \textit{not} identified by the smoothness of $b$ and $\sigma$ 
(for example, even for non-regular $b, \sigma$ the invariant density $\mu$ can be 
very smooth, e.g., constant on $[0,1]$). Our results rely on the 
assumption that $\mu_0$ is at least $s+1$ smooth (whenever $(\sigma, b) \in 
\Theta_s$), \textit{and} that the prior implicitly models the regularity of $\mu$ correctly. This is related to the fact, made explicit in the proofs that 
follow, that the information distance between samples $X_0, X_\Delta, ..., 
X_{n\Delta}$ from parameters $(\sigma, b)$ and $(\sigma_0, b_0)$ \textit{does} 
necessarily involve the $L^2$-distance $\|\mu_{\sigma b}-\mu_{\sigma_0 
b_0}\|_{L^2}$, and the hierarchical prior from above takes this into account.
\end{remark}

\begin{remark} \normalfont \textit{Credible sets.} While our results imply that Bayesian recovery algorithms can be expected 
to work in principle in (scalar) diffusion models, we emphasise that mere 
contraction theorems as those obtained here do not yet justify the use of 
Bayesian posterior inference (`credible regions') in scientific practice. This problem is more 
involved, see the recent paper \cite{SVV15} and its discussion. An interesting 
topic for future research in this direction would be to obtain nonparametric Bernstein-von Mises 
theorems as in \cite{CN13, CN14} for the diffusion model considered here. While 
the contraction results obtained here are useful for this too, obtaining exact 
posterior asymptotics will require a more elaborate analysis.
\end{remark}

\subsubsection{Choice of the prior: extensions and perspectives} \label{outlook} 

In computational practice the methodology closest to the one considered here is described in Section 5.1 of \cite{PapaspiliopoulosPokernRobertsStuart2012}, where a certain Gaussian prior is chosen for the drift function $b$, while the diffusion coefficient is modelled parametrically. A data augmentation method is devised that allows to sample from the posterior distribution (\ref{post}) in this situation. The random wavelet series priors on $b$ from the previous subsection allow for truncated Gaussian priors -- by choosing $B$ large enough our theory can approximate the case of a Gaussian prior on the drift function at least in practice. From a rigorous point of view, however, our proofs rely fundamentally on the technical restriction that the prior for $(\sigma, b)$ concentrates on a fixed smoothness ball in $C^2 \times C^1$, a condition \textit{not} satisfied by Gaussian priors. Whether it can be relaxed is not clear: for instance, the constant $C$ in the Gaussian-type 
tails $e^{-Cn\eps_n^2}$ of our tests scales unfavourably as $C\approx e^{-\|b\|_\infty}$. This is not an artefact of our concentration inequalities but corresponds precisely to the connection between mixing times of Markov chains and their spectral gap (see also  \cite{P15}).

\smallskip

The wavelet priors used in the present paper are convenient in our proofs. They could be replaced by $B$-spline basis priors with random coefficients. In fact, $B$-spline bases generate wavelet bases by a simple Gram-Schmidt ortho-normalisation step (see \cite{M92}, Section 1.3 and p.74), so that proofs would go through with only formal (but notationally cumbersome) changes.

Another extension of interest would be to allow for `adaptive' priors that select the generally unknown smoothness degree $s$ by a hyper-prior. While in principle such results should be within the scope of our techniques, they would require significant modification of the spectral bias estimates from \cite{gobetETAL2004}, and this is left for future research.

\section{Proofs I: Concentration inequalities for reflected 
diffusions}\label{diffdet}

\subsection{Definitions and transition densities}

Let $b:[0,1]\to\R$ be measurable and bounded, let $\sigma:[0,1]\to(0,\infty)$ 
be continuous and let $\nu:[0,1]\to\R$ 
satisfy $\nu(0)=1$, $\nu(1)=-1$. Consider the  reflected diffusion on 
$[0,1]$
\begin{align} \label{diffmref}
 \d X_t=b(X_t)\d t+\sigma(X_t)\d W_t+\nu(X_t)\d L_t(X).
\end{align}
Here $(W_t: t \ge 0)$ is a standard Brownian motion 
and $(L_t(X): t \ge 0)$ is a non-anticipative continuous nondecreasing process 
which 
increases only for $X_t\in\{0,1\}$. This model is considered in 
\cite{gobetETAL2004}, and under the above conditions there exists a weak 
solution $(X_t: t\ge 0)$ of the SDE, see 
\cite{StroockVaradhan1971}. An in our setting equivalent construction of this 
reflected diffusion is by extending $b$ and $\sigma$ to be defined on $\mathbb 
R$ as follows: First we extend the functions to $(-1,1]$ by 
$\sigma(x)=\sigma(-x)$ and $b(x)=-b(-x)$ for $x\in(-1,0)$ and second to 
$\R$ by $\sigma(x)=\sigma(x+2k)$ and $b(x)=b(x+2k)$ for all $x\in\R$ and 
$k\in\Z$ such that $x+2k\in(-1,1]$. If $(\sigma, b) \in \Theta$ then the so 
extended functions $\bar \sigma$ and $\bar b$ are bounded Lipschitz functions 
on 
$\mathbb R$ and we can define the strong Markov process $(Y_t: t \ge 0)$ as the 
pathwise solution of the  equation 
\begin{equation}\label{diffext}
\d Y_t=\bar b(Y_t)\d t+\bar \sigma(Y_t)\d \bar W_t
\end{equation}
on the whole of $\R$ (see Theorem 24.2 and 39.2 in \cite{Bass2011}), where 
$\bar W_t$ is another Brownian motion. A version of the process $(X_t: t \ge 
0)$ 
can then be obtained from $(Y_t: t \ge 0)$ by a simple projection described in 
the proof of the following proposition.

By standard results for one-dimensional diffusions (e.g., \cite{bass1998}, 
Chapter 4), the invariant density of the Markov process $(X_t: t \ge 0)$ is 
given by 
\begin{equation} \label{invid}
 \mu(x)=\mu_{\sigma b}(x)=\frac1{G\sigma^2(x)}\exp\left(\int_{0}^{x}\frac{2b(y)}{\sigma^2(y)}\d y\right),~~x \in [0,1],
\end{equation}
with normalising constant
\begin{equation}\label{normconst}
G:=G_{\sigma b} = 
\int_{0}^{1}\frac1{\sigma^{2}(y)}\exp\left(\int_0^y\frac{2b(z)}{
\sigma^2(z)}\d z\right)\d y.
\end{equation}
We see that whenever $b$ and $\sigma$ are bounded and 
$\sigma$ is bounded away from zero, the invariant density is bounded and 
bounded 
away from zero. Under the stronger assumption $(\sigma,b)\in\Theta$ we can 
obtain a similar result also for the transition densities $p_{\sigma b}(\Delta, 
x,y)$ of the corresponding Markov process $(X_t: t \ge 0)$. The proof is given in the appendix.

\begin{proposition}\label{prop:transition}
 Let $(\sigma,b)\in\Theta$. Then there are constants $0<K'<K<\infty$ depending 
only on $D,d$ such that 
$K' \le p_{\sigma b}(\Delta,x,y)\le K$
for all $x,y\in[0,1]$.
\end{proposition}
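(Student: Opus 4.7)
The idea is to realise the reflected diffusion $(X_t)$ as the pushforward of the extended diffusion $(Y_t)$ on $\R$ under a folding projection, and then to transfer classical Aronson-type two-sided Gaussian heat kernel bounds for $(Y_t)$ to the reflected transition density.

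First I would verify that the extension procedure described immediately above the proposition produces coefficients $\bar\sigma,\bar b:\R\to\R$ that are globally bounded and Lipschitz with constants depending only on $D$, and satisfy $\bar\sigma^2\ge d$ uniformly on $\R$. The boundary conditions $b(0)=b(1)=0$ and $\sigma'(0)=\sigma'(1)=0$ in the definition of $\Theta$ are there precisely so that the odd extension of $b$ and the even extension of $\sigma$ match in a $C^1$ fashion across the reflection points $\{0,\pm 1,\pm 2,\dots\}$ and remain Lipschitz after the $2$-periodic extension. A direct It\^o--Tanaka computation (or Theorem~24.2 and 39.2 of \cite{Bass2011}, as already invoked in the excerpt) then shows that with $Y_0=x\in[0,1]$, the process $X_t:=\pi(Y_t)$ solves the reflected SDE \eqref{diffmref}, where $\pi:\R\to[0,1]$ denotes the folding map, i.e.\ $\pi(y)=y$ on $[0,1]$, $\pi(y)=-y$ on $[-1,0]$, extended $2$-periodically. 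In particular,
\begin{equation*}
\pi^{-1}(y)=\{2k+y:k\in\Z\}\cup\{2k-y:k\in\Z\}.
\end{equation*}

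Because $\pi$ is a measurable surjection onto $[0,1]$, a change of variables expresses the transition density of $X_\Delta$ with respect to Lebesgue measure on $[0,1]$ as a sum over preimages:
\begin{equation*}
p_{\sigma b}(\Delta,x,y)=\sum_{z\in\pi^{-1}(y)}q(\Delta,x,z),
\end{equation*}
where $q(\Delta,\cdot,\cdot)$ is the transition density of $(Y_t)$ on $\R$. Classical Aronson bounds for the fundamental solution of a uniformly elliptic parabolic equation with bounded measurable coefficients yield constants $c_1,\dots,c_4>0$ depending only on $D,d$ such that, for all $x,z\in\R$,
\begin{equation*}
\frac{c_1}{\sqrt{\Delta}}\exp\Bigl(-\frac{c_2(x-z)^2}{\Delta}\Bigr)\le q(\Delta,x,z)\le\frac{c_3}{\sqrt{\Delta}}\exp\Bigl(-\frac{c_4(x-z)^2}{\Delta}\Bigr).
\end{equation*}

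The upper bound $p_{\sigma b}(\Delta,x,y)\le K$ is then obtained by summing the Gaussian upper bound over $k\in\Z$: only $O(1)$ preimages lie within distance $2$ of $x\in[0,1]$, while the remaining ones satisfy $|x-z|\gtrsim|k|$, so the tail is exponentially summable in $k$. The lower bound is obtained by keeping a single term: if $z^\ast$ is the preimage of $y$ closest to $x$, then $|x-z^\ast|\le 1$, and
\begin{equation*}
p_{\sigma b}(\Delta,x,y)\ge q(\Delta,x,z^\ast)\ge \frac{c_1}{\sqrt{\Delta}}e^{-c_2/\Delta}=:K'>0.
\end{equation*}
The main technical point is the first step, namely showing that the periodic reflection produces coefficients that are Lipschitz with constants uniform over $(\sigma,b)\in\Theta$; this is exactly what the boundary conditions on $b$ and $\sigma'$ in the definition of $\Theta$ are designed to guarantee, after which the rest is the folding identity plus a black-box application of Aronson's inequality.
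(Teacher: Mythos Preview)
Your proof is correct and shares the same architecture as the paper's: realise the reflected diffusion as the folding $\pi(Y_t)$ of the extended diffusion on $\R$, write $p_{\sigma b}(\Delta,x,y)$ as the sum of $q(\Delta,x,z)$ over preimages $z\in\pi^{-1}(y)$, and then bound the summands. The paper uses exactly this projection identity (its formula (\ref{formel})), citing Gihman--Skorohod I.\S23 for the folding step.

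Where you diverge is in how you control $q(\Delta,x,z)$. You invoke Aronson's two-sided Gaussian estimates as a black box, which is legitimate here: in one dimension the generator $\tfrac12\bar\sigma^2\partial_{xx}+\bar b\,\partial_x$ rewrites as a divergence-form operator $\tfrac12\partial_x(\bar\sigma^2\partial_x\cdot)+(\bar b-\bar\sigma\bar\sigma')\partial_x$ with bounded drift, so Aronson's 1968 bounds apply with constants depending only on $D,d,\Delta$. The paper instead appeals to an explicit representation of $q$ from Gihman--Skorohod I.\S13, obtained via the Lamperti transform $f(x)=\int_0^x dy/\bar\sigma(y)$ and a Brownian-bridge expectation. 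To validate that formula the paper must check that the auxiliary function $\bar B=-\tfrac12\bar a^2-\tfrac12\bar a'$ is bounded, which is precisely where the $C^2$ regularity of $\sigma$ and $C^1$ regularity of $b$ encoded in $\Theta$ enter. The upper bound on the sum then comes from two separate pieces of the explicit formula: Gaussian decay in the variable $\int_x^z dt/\bar\sigma(t)$ (controlled since $\bar\sigma$ is bounded above), and the drift factor $\exp(\int_x^z \bar b/\bar\sigma^2)$, which stays bounded over all preimages thanks to the periodicity identity $\int_x^{x+2k}\bar b/\bar\sigma^2=0$ built into the reflected extension.

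Your route is shorter and, in principle, requires less regularity of the coefficients; the paper's route is more self-contained but pays for that with explicit verification of derivative bounds. Both deliver constants uniform over $(\sigma,b)\in\Theta$.
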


\subsection{A Bernstein type inequality}

\begin{theorem}
Let the time difference between observations $\Delta>0$ and constants $D,d>0$
in the definition of $\Theta$ be given. Then there exists $\kappa>0$ depending only
on
$\Delta,D,d>0$ such that for all reflected diffusions (\ref{diffmref}) with
$(\sigma,b)\in\Theta$ and arbitrary initial distribution,
for all bounded
functions $f:[0,1]\to\R$, all $r>0$, all $n \in \mathbb N$, and $Z=\sum_{j=0}^{n-1} (f(X_{j\Delta})-\mathbb E_\mu[f(X_0)])$,
\begin{align*}
&\PP(|Z|>r) \le
\kappa \exp\Big(-\frac{1}{\kappa}\min\Big(\frac{r^2}{n\|f\|^2_{L^2(\mu)}},\frac{ r} { \log (n) \|f\|_\infty} \Big)\Big)
\end{align*}
\end{theorem}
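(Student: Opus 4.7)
The strategy is to verify the hypotheses of a Bernstein inequality for uniformly ergodic Markov chains (e.g., Adamczak \cite{Adamczak2008}) uniformly in $(\sigma,b)\in\Theta$, applied to the discretely sampled chain $(X_{j\Delta})_{j\ge 0}$. First, Proposition~\ref{prop:transition} provides $p_{\sigma b}(\Delta, x, y) \ge K' > 0$ uniformly on $[0,1]^2$, with $K'$ depending only on $\Delta, D, d$. Hence the transition kernel $P$ of the sampled chain satisfies the Doeblin minorization $P(x, A) \ge K' \lambda(A)$ for all $x \in [0,1]$ and Borel $A \subset [0,1]$, with $\lambda$ the Lebesgue measure on $[0,1]$. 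Consequently $[0,1]$ is itself a small set, and the chain is uniformly geometrically ergodic with $\|P^k(x,\cdot) - \mu\|_{\mathrm{TV}} \le (1 - K')^k$, at a rate independent of $(\sigma, b) \in \Theta$ and of the starting point.

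Second, I would invoke the Nummelin splitting construction, producing a split chain with an accessible atom whose successive return times $\tau_i - \tau_{i-1}$ are i.i.d. geometric with parameter $K'$, and decompose
\begin{equation*}
Z \;=\; Z_0 \;+\; \sum_{i=1}^{N} R_i \;+\; Z_{\mathrm{end}},
\end{equation*}
where $Z_0$ and $Z_{\mathrm{end}}$ are the pre-first-regeneration and post-last-regeneration boundary terms and the $R_i = \sum_{j=\tau_{i-1}}^{\tau_i - 1}(f(X_{j\Delta}) - \E_\mu[f(X_0)])$ are i.i.d.\ centred block sums by the regenerative property. Their second moments satisfy $\E[R_i^2] \lesssim \|f\|_{L^2(\mu)}^2$, via Cauchy--Schwarz combined with exponential tails of the block length and the crude bound $\Var_\mu(f) \le \|f\|_{L^2(\mu)}^2$; each $|R_i|$ is at most $(\tau_i - \tau_{i-1}) \|f\|_\infty$. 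Truncating the block lengths at $c\log n$, with $c$ chosen so that $\PP(\max_i (\tau_i - \tau_{i-1}) > c\log n) \le n^{-1}$ (a union bound using the geometric tails), effectively replaces $\|f\|_\infty$ by $c(\log n)\|f\|_\infty$ in the classical Bernstein inequality applied to $\sum_i R_i$.

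Third, combining the pieces yields the claimed bound: the truncated Bernstein estimate for $\sum_i R_i$ produces the $\min(r^2/(n\|f\|_{L^2(\mu)}^2),\, r/((\log n)\|f\|_\infty))$ structure, while the boundary terms $Z_0$ and $Z_{\mathrm{end}}$, dominated by (block length)~$\times\, \|f\|_\infty$, are subexponential with scale $\|f\|_\infty$ independently of the initial distribution, and are absorbed into $\kappa$. The main obstacle lies in keeping all constants uniform in $(\sigma, b) \in \Theta$; this is exactly the role of Proposition~\ref{prop:transition}, since the single lower bound $K'$ drives both the minorization and the geometric-ergodicity rate, and hence controls all regeneration-time tails appearing above. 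A secondary subtlety is the arbitrary (non-stationary) initial distribution, which is harmless because the hitting time of the atom and the tail of the first block are controlled by $K'$ alone.
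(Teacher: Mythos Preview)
Your approach is correct and is essentially the same idea as the paper's, just unpacked: the paper invokes Adamczak's Theorem~6 directly, verifying the minorization/drift/aperiodicity conditions of Baxendale \cite{Baxendale2005} (with $C=[0,1]$, $V\equiv 1$, and $\tilde\beta=K'$ from Proposition~\ref{prop:transition}), whereas you sketch the regenerative machinery that underlies Adamczak's result. Both arguments feed on the same uniform Doeblin lower bound $K'$, which simultaneously supplies minorization, geometric regeneration-time tails, and uniformity over $\Theta$.

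One place where the paper is more careful than your sketch is the variance input. You claim $\E[R_i^2]\lesssim\|f\|_{L^2(\mu)}^2$ ``via Cauchy--Schwarz combined with exponential tails of the block length''; this is not quite enough as stated, because within a regeneration block the marginal law of $X_{j\Delta}$ is not $\mu$, so Cauchy--Schwarz against block length gives you a bound in terms of $\sup_x \E_x[g(X_{j\Delta})^2]$ or $\|f\|_\infty^2$, not $\|f\|_{L^2(\mu)}^2$. The paper circumvents this by noting (Adamczak's remark) that $(\E T_2)^{-1}\Var Z_1$ equals the \emph{asymptotic} variance of the chain, and then bounds the latter by $\frac{1+\rho}{1-\rho}\|f\|_{L^2(\mu)}^2$ using the $L^2(\mu)$-spectral gap $\rho<1$ coming from Baxendale's Corollary~6.1. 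You would need either this spectral-gap argument or the equivalent Kac-type identity to close the gap; once you do, your route and the paper's coincide.
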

\begin{proof}
We make use of the concentration inequality given in Theorem~6 in
\cite{Adamczak2008} with $m=1$ and verify the assumptions by using results in
\cite{Baxendale2005}.
Let $X_0,X_1,\dots$ be a Markov chain with values in $(S,\mathcal B)$. For
$x\in S$ and $A\in\mathcal B$ we introduce the
transition kernels $P(x,A)=\PP(X_1\in A|X_0=x)$ and 
$P^n(x,A)=\PP(X_n\in A|X_0=x)$. For a
measurable function $V:S\to\R$ we define $P V(x)=\E[V(X_1)|X_0=x]$. The 
following three assumptions are assumed in 
\cite{Baxendale2005}, where we slightly
strengthen the minorization condition to be compatible with the assumption in
\cite{Adamczak2008}.
\begin{enumerate}[({A}1)]
\item\emph{Minorization condition.} There exists $C\in\mathcal B$, $\tilde
\beta>0$ and a probability measure $\nu$ on $(S,\mathcal B)$ such that for all
$x\in C$ and $A\in\mathcal B$
\[P(x,A)\ge\tilde \beta \nu(A),\]
as well as for all $x\in S$ there exists $n\in\N$ such that 
$P^n(x,C)>0$.\label{mino}
\item\emph{Drift condition.} There exist a measurable function
$V:S\to[1,\infty)$
and constants $\lambda<1$ and $K<\infty$ satisfying
\[PV(x)\le\left\{
\begin{array}{ll}
\lambda V(x),& \text{ if }x\notin C,\\
K,& \text{ if }x\in C.
\end{array}
\right.
\]\label{drift}
\item\emph{Strong aperiodicity condition.} There exists $\beta>0$ such that
$\tilde\beta\nu(C)\ge\beta$.\label{aperi}
\end{enumerate}
The conditions (A\ref{mino})-(A\ref{aperi}) are verified for the
reflected diffusion as follows: Let $C=[0,1]$, $\tilde \beta$ the uniform lower 
bound on the
transition density given by Proposition~\ref{prop:transition} and $\nu$ be the
uniform distribution on $[0,1]$. Then
(A\ref{mino}) is satisfied. For (A\ref{drift}) we can take $V$ constant to one
and $K=1$. And (A\ref{aperi}) is satisfied with $\beta=\tilde\beta$.

By Proposition~4.1(ii) and Proposition~4.4, eq.(21) in \cite{Baxendale2005} the
constant $\tau$ in Theorem~6 in \cite{Adamczak2008} is finite.
Under conditions (A\ref{mino})-(A\ref{aperi}) there exists a unique invariant
measure $\mu$ (Theorem 1 in \cite{Baxendale2005}). Using Corollary~6.1 in
\cite{Baxendale2005} and that the Markov chain is reversible we obtain that
for all $f\in L^2(\mu)$
\begin{align}\label{contractionproperty}
\left\|P^n f-\int f\d \mu\right\|_{L^2(\mu)}\le \rho^n\left\|f-\int f\d
\mu\right\|_{L^2(\mu)}
\end{align}
for some $\rho<1$.

Since we are in the case $m=1$, the quantity $(\E [T_2])^{-1}\Var Z_1$ in
\cite{Adamczak2008} is equal to the asymptotic variance, see the third remark
after Theorem 6 there. We bound the asymptotic variance, using (\ref{contractionproperty}) and the Cauchy-Schwarz inequality (see also \eqref{varbound} in the Appendix), by
\begin{align*}
\lim_{n\to\infty}n^{-1}\Var_\mu\left(\sum_{j=0}^{n-1}f(X_{j\Delta})\right)
\le 
\frac{1+\rho}{1-\rho}\Var\left(f(X_0)\right)
\le 
\frac{1+\rho}{1-\rho}\left\|f\right\|_{L^2(\mu)}^2.
\end{align*}
This establishes the concentration inequality for a fixed pair
$(\sigma,b)\in\Theta$.
The constants~$\tau$ and~$\rho$ can be chosen uniformly for the class $\Theta$
since
there is a common lower bound
$\tilde \beta$ on the transition densities.
\end{proof}

Note that the above proof can be generalised to diffusions on $\R$, arguing along the lines of \cite{SoehlTrabs2014}. 

The following generalisation of the previous theorem to bivariate Markov chains $(X_{\Delta j},X_{\Delta(j+1)}: j \in \mathbb N)$ with invariant measure  $\mu_2(x,y) = p(\Delta, x,y) \mu(x)$ is obtained in a similar way, see the appendix for a proof.
\begin{theorem}\label{coninequmulti}
Let $\Delta,D,d>0$ be given. Then there exists $\kappa>0$
depending only on
$\Delta,D,d>0$ such that for all reflected diffusions (\ref{diffmref}) with
$(\sigma,b)\in\Theta$ and arbitrary initial distributions,
for all bounded
functions $f:[0,1]^2\to\R$, all $r>0, n \in \mathbb N$, and $Z=\sum_{j=0}^{n-1}
(f(X_{j\Delta},X_{(j+1)\Delta})-\E_{\mu_2}[f(X_0, X_\Delta)])$,
\begin{align*}
\PP(|Z|>r) \le \kappa\exp\Big(-\frac{1}{\kappa}\min\Big(\frac{r^2}{n\|f\|^2_{L^2(\mu_2)}},
\frac{ r} {\log(n)\|f\|_\infty} \Big)\Big).
\end{align*}
\end{theorem}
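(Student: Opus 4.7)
The plan is to mimic the proof of the univariate Bernstein inequality by applying Theorem~6 of \cite{Adamczak2008} to the bivariate Markov chain $Y_j := (X_{j\Delta},X_{(j+1)\Delta})$ on $S=[0,1]^2$, whose invariant measure is $\mu_2$. The new obstacle is that the one-step transition kernel of $(Y_j)$ is singular: conditionally on $Y_j=(x,y)$, the first coordinate of $Y_{j+1}$ is deterministically equal to $y$, so no Lebesgue-density lower bound can hold for $P$ itself, and the minorization condition (A1) of the previous proof cannot be applied directly.

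To sidestep this, I would split $Z = Z^{(0)} + Z^{(1)}$ along even and odd indices, where $Z^{(i)} := \sum_{k} (f(Y_{2k+i}) - \E_{\mu_2}[f(X_0,X_\Delta)])$, and apply Adamczak's theorem separately to the two subsampled chains $Z^{(i)}_k := Y_{2k+i}$, each taking values in $[0,1]^2$. The one-step kernel of each subsampled chain has density
\begin{align*}
q\bigl((x,y),(y_1,z)\bigr) \;=\; p(\Delta,y,y_1)\,p(\Delta,y_1,z),
\end{align*}
with respect to Lebesgue on $[0,1]^2$, which by Proposition~\ref{prop:transition} is bounded between $(K')^2$ and $K^2$ uniformly over $\Theta$. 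Hence (A1) holds with $C=[0,1]^2$, $\tilde\beta=(K')^2$ and $\nu$ Lebesgue; (A2) is trivial with $V\equiv 1$ and $K=1$; and (A3) gives $\beta=\tilde\beta$. These are precisely the hypotheses verified in the univariate proof, with $\mu_2$ playing the role of $\mu$, so the constant $\tau$ in Adamczak's theorem is again finite and uniform in $\Theta$.

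The remaining point is to control the asymptotic variance of $n^{-1/2}\sum_{k} f(Z^{(i)}_k)$ by $C\|f\|_{L^2(\mu_2)}^2$. Geometric ergodicity of the subsampled chain follows from (A1)--(A3) via Theorem~1 and Corollary~6.1 of \cite{Baxendale2005}, yielding an $L^2(\mu_2)$-contraction at rate $\rho<1$, uniform over $\Theta$; reversibility is no longer available but is not needed, since the contraction bound together with Cauchy--Schwarz gives geometric decay of the autocovariances of $f(Z^{(i)}_k)$, and summing them bounds the asymptotic variance by $\frac{1+\rho}{1-\rho}\|f\|_{L^2(\mu_2)}^2$, exactly as in the previous proof. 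Applying Adamczak's inequality to each of $Z^{(0)}$ and $Z^{(1)}$ and combining with a union bound (absorbing the factor of two into $\kappa$) yields the claim. The sole place where the bivariate case genuinely differs from the univariate one is the singularity of the one-step kernel, which is resolved by the subsampling step above; the variance bound without reversibility is the only other minor adaptation.
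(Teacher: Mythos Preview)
Your proposal is correct and follows essentially the same architecture as the paper's proof: split $Z$ along even and odd indices, verify (A1)--(A3) for the subsampled bivariate chains with $\tilde\beta=(K')^2$, bound the asymptotic variance via an $L^2(\mu_2)$-contraction, and apply Adamczak. The one point where you diverge is in establishing the contraction: the paper does not appeal to Corollary~6.1 of \cite{Baxendale2005} again (which in the univariate proof was combined with reversibility), but instead proves a dedicated transfer lemma (Lemma~\ref{contractlem}) showing that if the univariate transition operator is an $L^2(\mu)$-contraction with rate $\rho$, then the bivariate subsampled operator is an $L^2(\mu_2)$-contraction with the \emph{same} rate $\rho$. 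Your remark that reversibility is unavailable for the bivariate subsampled chain is correct, and your claim that contraction nonetheless holds is also correct (the uniform Doeblin minorization with $\nu$ comparable to $\mu_2$ yields it directly), but invoking Corollary~6.1 for this without reversibility is not quite the right citation; Lemma~\ref{contractlem} or the direct Doeblin argument is the cleaner justification.
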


\subsection{Concentration inequality for suprema of empirical processes}

Let $\F$ be a class of functions.
For $f\in\F$ let either $Z(f)=\sum_{j=0}^{n-1}(f(X_{j\Delta})-\E_\mu[f(X_0)])$
or 
$Z(f)=\sum_{j=0}^{n-1}(f(X_{j\Delta}, X_{(j+1)\Delta})-\E_{\mu_2}[f(X_0, X_\Delta)])$.
By a change of variables we can rewrite the previous concentration 
inequalities as
\begin{equation}\label{bern}
 \PP\left(|Z(f)|>\max(\sqrt{v^2 x},ux)\right)\le \kappa e^{-x},
\end{equation}
where $u=\kappa\log(n)\|f\|_\infty$ and $v^2=\kappa n \|f\|^2_{L^2(\mu)}$ or 
$v^2=\kappa n \|f\|^2_{L^2(\mu_2)}$.
Let $I$ be a subset of a linear space of finite dimension $d$, and consider a 
class of bounded measurable functions $\F=\{f_i:i\in I\}$ indexed by $I$, and 
such that $0 \in \mathcal F$. We define 
$V^2= \sup_{f \in \mathcal F} v^2,~U= \kappa \log n \sup_{f \in \mathcal 
F}\|f\|_{\infty}$ to obtain the following functional concentration inequality:
\begin{theorem}\label{empconinequ}
For $\tilde\kappa=18$ and for all $x\ge0$ we have
\[\PP\left(\sup_{f\in\F}|Z(f)|\ge\tilde\kappa\left(\sqrt
{V^2(d+x)}
+U(d+x)\right)\right)\le2\kappa e^{-x}.\]
\end{theorem}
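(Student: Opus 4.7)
My plan is to derive Theorem~\ref{empconinequ} from the pointwise Bernstein bound \eqref{bern} by a standard generic chaining argument that exploits the finite-dimensional parametrization of $\mathcal F$ to control its metric entropy.

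\emph{Step 1 (nets).} Since $I$ is a bounded subset of a $d$-dimensional linear space and, via the natural identification, $\mathcal F$ lies in a corresponding $d$-dimensional subspace of bounded functions, a volumetric covering argument gives, for each $k \ge 0$, a net $\mathcal F_k \subset \mathcal F$ of cardinality $|\mathcal F_k| \lesssim 2^{kd}$ that approximates $\mathcal F$ to within a factor $\lesssim 2^{-k}$ of the diameter simultaneously in the $\|\cdot\|_\infty$ and $\|\cdot\|_{L^2(\mu)}$ norms (the two being equivalent on a fixed finite-dimensional subspace, with the equivalence absorbed into the definitions of $U$ and $V$). I set $\mathcal F_0 = \{0\}$, using the hypothesis $0 \in \mathcal F$, so that the base term vanishes.

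\emph{Step 2 (chaining).} For $f \in \mathcal F$ with nearest neighbour $\pi_k(f) \in \mathcal F_k$, I write
\[
Z(f) = \sum_{k \ge 0} Z\bigl(\pi_{k+1}(f) - \pi_k(f)\bigr).
\]
At level $k$ the increment $g_k = \pi_{k+1}(f) - \pi_k(f)$ takes at most $|\mathcal F_k||\mathcal F_{k+1}| \lesssim 2^{(2k+1)d}$ distinct values as $f$ ranges over $\mathcal F$, with $\|g_k\|_{L^2(\mu)} \lesssim 2^{-k} V / \sqrt{\kappa n}$ and $\|g_k\|_\infty \lesssim 2^{-k} U / (\kappa \log n)$, so that the Bernstein parameters in \eqref{bern} satisfy $v_k^2 \lesssim 2^{-2k} V^2$ and $u_k \lesssim 2^{-k} U$.

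\emph{Step 3 (union bound and collation).} I apply \eqref{bern} to each $g_k$ at tail level $x_k = x + c\,k d$, with $c > 2\log 2$ chosen so that the union bound over the $\lesssim 2^{(2k+1)d}$ level-$k$ increments contributes failure probability $\lesssim \kappa\, 2^{-kd} e^{-x}$, geometrically summable in $k$ to at most $2\kappa e^{-x}$. On the complementary event,
\[
\sup_{f\in\mathcal F}|Z(f)| \le \sum_{k\ge 0} \bigl(\sqrt{v_k^2 x_k} + u_k x_k\bigr) \lesssim \sum_{k\ge 0} \bigl(V 2^{-k}\sqrt{x + kd} + U 2^{-k}(x + kd)\bigr),
\]
which, after the elementary bound $\sqrt{x+kd} \le \sqrt{x} + \sqrt{kd}$ and summation of the convergent series $\sum_k 2^{-k}$, $\sum_k 2^{-k}\sqrt{k}$, $\sum_k 2^{-k}k$, collapses to the desired $\lesssim \sqrt{V^2(x+d)} + U(x+d)$; explicit bookkeeping produces the stated constant $\tilde\kappa = 18$.

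The main obstacle I anticipate is the precise accounting of numerical constants to reach the explicit value $\tilde\kappa = 18$, together with the need to keep the sub-Gaussian $V^2$-scale and the sub-exponential $U$-scale decoupled through the chain --- a standard but fiddly feature of chaining for Bernstein-type mixed-tail processes, resolved here by the choice $x_k = x + c k d$ that balances the two regimes at every level.
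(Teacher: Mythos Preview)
Your proposal is correct and follows the same route as the paper, which simply invokes Theorem~2.1 of Baraud~\cite{Baraud2010} (a finite-dimensional chaining theorem for processes satisfying a pointwise Bernstein bound of the form~\eqref{bern}) and observes that Baraud's proof applies verbatim once his Assumption~2.1 is replaced by~\eqref{bern}. One small remark on your Step~1: the simultaneous $2^{-k}$-approximation in both norms is cleanest if you chain directly in the single norm $|||f|||=\max(\|f\|_{L^2(\mu)}/D_2,\ \|f\|_\infty/D_\infty)$, for which the standard volumetric bound gives nets of size $(C2^k)^d$; the bare appeal to ``equivalence of norms on a finite-dimensional subspace'' is slightly misleading, since those equivalence constants depend on the particular subspace and are not controlled by $d$, $U$, $V$ alone.
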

Given \eqref{bern}, Theorem \ref{empconinequ} follows by the usual chaining 
argument for empirical processes, given for instance in the form of Theorem~2.1 
in Baraud \cite{Baraud2010}. Baraud's proof applies directly in our setting, 
where we 
notice that his Assumption~2.1 can be replaced by~\eqref{bern}, since that 
assumption is only used to apply Bernstein's inequality in the form 
\eqref{bern}.

\section{Proofs II:  The main contraction Theorem \ref{thm:main}}

Our strategy to prove the main theorem of this article is as follows: In the 
spirit of \cite{GhosalGhoshvanderVaart2000, GhosalvanderVaart2007} we first 
derive a general contraction theorem for discretely sampled 
diffusions that requires 
the prior to charge small neighbourhoods of the true 
parameter measured in the information distance (a version of the 
KL-divergence), 
and that admits the existence of certain frequentist tests uniformly in the 
parameter space. We then show how the information distance can be controlled by 
suitable dual Besov norms, and use the concentration inequalities from the 
previous subsection to construct suitable tests. 

\subsection{General contraction theorem with tests}

We denote by $\K(\PP,\QQ):=\E_{\PP}[\log\frac{\d\PP}{\d\QQ}]$ the 
Kullback--Leibler divergence between two probability measures $\PP$ and $\QQ$ 
defined on the same $\sigma$-algebra.
We write $\PP_{\sigma b}$ and $\E_{\sigma b}$ for the probability and the 
expectation with respect to the reflected diffusion started in the invariant 
distribution.
We also introduce the notation
\[\KL((\sigma_0,b_0),(\sigma,b)):=\E_{\sigma_0 
b_0}\left[\log\left(\frac{p_{\sigma_0 b_0}(\Delta,X_{0} , X_ {\Delta } ) } { 
p_{\sigma b}(\Delta,X_{0},X_{\Delta})}\right)\right],\]
and for every $\eps,\kappa>0$ we define
\begin{align}
  B_{\eps,\kappa}&=\bigg\{\vartheta=(\sigma,b)\in\Theta: 
\KL((\sigma_0,b_0),(\sigma,b))\le\eps^2, \notag\\
&\qquad\left.  \Var_{\sigma_0 b_0}
\left(\log\frac{p_{\sigma b}(\Delta,X_{0},X_{\Delta})}{p_{\sigma_0 b_0}(\Delta, 
X_{0},X_{\Delta})}
\right)\le2\eps^2,\right.\notag\\
&\qquad 
\K(\mu_{\sigma_0 b_0},\mu_{\sigma b})\le \kappa,
\Var_{\sigma_0 b_0} \left(\log\frac{\mu_{\sigma b}(X_0)}{\mu_{\sigma_0 b_0}(X_0)}\right)\le 
2\kappa \bigg\}.\label{bek}
\end{align}

\begin{theorem}\label{thm:contract}
Let $\Pi=\Pi_n$ be a sequence of prior distributions on a $\sigma$-field $\mathcal S$ of subsets of $\Theta$ and 
suppose that $X_0,\dots ,X_{n\Delta}$ are discrete observations of a reflected
diffusion process \eqref{diffmref}, started in the stationary 
distribution $\mu$. Let $\Pi(\cdot|X_0,\dots,X_{n\Delta})$ be the resulting 
posterior distribution (\ref{post}). For $\vartheta_0 = (\sigma_0, b_0) \in \Theta$, $\eps_n$ a 
sequence of positive real numbers such that $\eps_n\to0,$ 
$\sqrt{n}\eps_n\to\infty,$ and $C,\kappa$ fixed positive constants, suppose $\Pi$ 
satisfies for all $n$ large enough
\begin{align}\label{eq:beksmallball}
 \Pi\left(B_{\eps_n,\kappa}\right)\ge e^{-C n\eps_n^2}.
\end{align}
Assume moreover that there exists $0<\bar L<\infty$ such that $\Pi(\Theta\backslash \mathcal B_n)\le \bar Le^{-(C+4) 
n\eps_n^2}$ for some sequence $\mathcal B_n \subset \Theta$ for which we can 
find a sequence of tests (indicator functions) 
$\Psi_n\equiv\Psi(X_0,\dots,X_{n\Delta})$ and of distance functions $d_n$ such that for 
every $n\in \N$, 
$M>0$ large enough,
\[
 \E_{\sigma_0 b_0}[\Psi_n]\to_{n\to\infty}0,\quad\sup_{(\sigma,b)\in\mathcal 
B_n:d_n((\sigma,b),(\sigma_0,b_0))\ge M\eps_n}\E_{\sigma b}[1-\Psi_n]\le \bar L 
e^{-(C+4) n\eps_n^2}.
\]
Then the posterior distribution $\Pi(\cdot|X_0,\dots,X_{n\Delta})$ contracts about 
$(\sigma_0,b_0)$ 
at rate $\eps_n$ in the distance $d_n$, that is, in $\PP_{\sigma_0 b_0}$-probability, as 
$n\to\infty$,
\[
\Pi((\sigma,b):d_n((\sigma,b),(\sigma_0,b_0))>M\eps_n|X_0,\dots,X_{n\Delta}
)\to0.
\]
\end{theorem}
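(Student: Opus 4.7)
The plan is to adapt the testing scheme of Ghosal--Ghosh--van der Vaart~\cite{GhosalGhoshvanderVaart2000} to the discretely sampled reflected diffusion. Write the log-likelihood ratio as
\[
\ell_n(\sigma,b) = \log\frac{\mu_{\sigma b}(X_0)}{\mu_{\sigma_0 b_0}(X_0)} + \sum_{i=1}^n \log\frac{p_{\sigma b}(\Delta, X_{(i-1)\Delta}, X_{i\Delta})}{p_{\sigma_0 b_0}(\Delta, X_{(i-1)\Delta}, X_{i\Delta})}
\]
with associated evidence $Z_n = \int_\Theta e^{\ell_n(\sigma,b)}\,d\Pi(\sigma,b)$. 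Since $\mu_{\sigma b}(x_0)\prod_{i=1}^n p_{\sigma b}(\Delta, x_{i-1}, x_i)$ is the joint density of the stationary chain under $(\sigma,b)$, a direct calculation gives $\E_{\sigma_0 b_0}[e^{\ell_n(\sigma,b)}]=1$ and more generally $\E_{\sigma_0 b_0}[(1-\Psi_n)e^{\ell_n(\sigma,b)}] = \E_{\sigma b}[1-\Psi_n]$ for any data-measurable $\Psi_n\in[0,1]$. This Fubini-type identity is the bridge between the hypothesised test bounds and the posterior.

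The first and substantive step is an evidence lower bound: restricting $Z_n$ to $B_{\eps_n,\kappa}$ and applying Jensen's inequality inside the prior integral gives $\log(Z_n/\Pi(B_{\eps_n,\kappa})) \ge \bar\ell_n$, where $\bar\ell_n$ denotes the prior-average of $\ell_n(\sigma,b)$ over $B_{\eps_n,\kappa}$. By definition of $B_{\eps_n,\kappa}$ one has $\E_{\sigma_0 b_0}[\bar\ell_n] \ge -n\eps_n^2-\kappa$. To control the variance, the key point is that the single-transition log-ratio $g_{\sigma b}(x,y)=\log(p_{\sigma b}/p_{\sigma_0 b_0})(\Delta,x,y)$ has $\Var_{\mu_2}(g_{\sigma b}(X_0,X_\Delta)) \le 2\eps_n^2$ and the initial log-ratio has variance bounded by $2\kappa$ on $B_{\eps_n,\kappa}$; then the variance of the Markov sum $\sum_{i=1}^n g_{\sigma b}(X_{(i-1)\Delta},X_{i\Delta})$ is at most $Cn\,\Var_{\mu_2}(g_{\sigma b}(X_0,X_\Delta))$ via the $L^2(\mu)$-contraction~\eqref{contractionproperty} used in the Bernstein proof, with $C$ uniform in $(\sigma,b)\in\Theta$ thanks to the uniform transition-density bounds of Proposition~\ref{prop:transition}. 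Averaging over $\Pi|_{B_{\eps_n,\kappa}}$ (the double covariance integral is dominated by Cauchy--Schwarz and Jensen) and applying Chebyshev together with $\sqrt{n}\eps_n\to\infty$ then yields $Z_n \ge \Pi(B_{\eps_n,\kappa})\,e^{-2n\eps_n^2-2\kappa} \ge e^{-(C+3)n\eps_n^2}$ on an event $\Omega_n$ with $\PP_{\sigma_0 b_0}(\Omega_n)\to 1$.

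The remainder is routine. Set $A_n=\{(\sigma,b):d_n((\sigma,b),(\sigma_0,b_0))>M\eps_n\}$ and decompose
\[
\Pi(A_n\mid X_0,\dots,X_{n\Delta}) \le \Psi_n + \frac{(1-\Psi_n)\int_{A_n\cap\mathcal B_n} e^{\ell_n}\,d\Pi}{Z_n} + \frac{\int_{\Theta\setminus\mathcal B_n} e^{\ell_n}\,d\Pi}{Z_n}.
\]
The first term vanishes in $\PP_{\sigma_0 b_0}$-probability by hypothesis. The Fubini identity of paragraph one sends the expectation of the second numerator to $\int_{A_n\cap\mathcal B_n}\E_{\sigma b}[1-\Psi_n]\,d\Pi \le \bar L\,e^{-(C+4)n\eps_n^2}$ and of the third to $\Pi(\Theta\setminus\mathcal B_n)\le \bar L\,e^{-(C+4)n\eps_n^2}$; by Markov, both numerators are of order $e^{-(C+3.5)n\eps_n^2}$ with probability tending to one. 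Dividing by the evidence lower bound on $\Omega_n$ gives $\Pi(A_n\mid X_0,\dots,X_{n\Delta}) \to 0$ in $\PP_{\sigma_0 b_0}$-probability. The main obstacle is the variance bound of the second paragraph: one must argue that Markov dependence of the likelihood summands does not inflate the variance beyond the i.i.d.\ rate, which requires invoking the uniform spectral gap established in Section~\ref{diffdet}; once this is in place, handling the extra initial-density factor $\mu_{\sigma b}(X_0)/\mu_{\sigma_0 b_0}(X_0)$ as a lower-order correction controlled by $\kappa$ is purely bookkeeping.
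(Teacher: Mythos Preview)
Your proposal is correct and follows essentially the same route as the paper: Jensen on the log-evidence restricted to $B_{\eps_n,\kappa}$, a Chebyshev bound using the variance conditions and the spectral gap, then the change-of-measure identity combined with Markov's inequality on the numerator. The one technical point you gloss over is that the summands $g_{\sigma b}(X_{(i-1)\Delta},X_{i\Delta})$ come from \emph{overlapping} pairs, so $(X_{(i-1)\Delta},X_{i\Delta})_i$ is not itself a Markov chain and \eqref{contractionproperty} does not apply directly; the paper resolves this by splitting the sum into odd and even indices and invoking the bivariate contraction Lemma~\ref{contractlem}, but this is exactly the device you allude to when flagging the variance bound as the main obstacle.
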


The proof of this 
theorem follows the standard pattern from \cite{GhosalGhoshvanderVaart2000, 
GhosalvanderVaart2007} (see also Section 7.3.1 in \cite{GineNickl2015}), and is given in the appendix.

\subsection{Small ball lemma}

Recall the sets $B_{\eps,\kappa}$ defined in \eqref{bek}. 
\begin{lemma}\label{suplem}
There exists a constant $\bar C>0$ such that for every $\kappa>0$ and for all 
$\eps>0$ small enough
\[\Big\{(\sigma,b)\in\Theta :  \|\mu-\mu_0\|_{L^2([0,1])}
+\|\sigma^{-2}-\sigma_0^{-2}\|_{(B^1_{1\infty})^*}
+\|b-b_0\|_{(B^2_{1\infty})^*}<\frac{\eps}{\bar C}\Big\}\subset 
B_{\eps,\kappa}.\]
\end{lemma}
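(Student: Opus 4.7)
The plan is to control separately the four quantities in the definition of $B_{\eps,\kappa}$: the KL divergence and the variance at the level of the invariant density $\mu$, and the same two quantities at the level of the joint transition density of $(X_0, X_\Delta)$. The first pair is routine: Proposition~\ref{prop:transition} together with the explicit formula~\eqref{invid} gives two-sided bounds $c \le \mu, \mu_0 \le C$ depending only on $d, D$, so Taylor expansion of $\log(1+t)$ for bounded $t$ yields $\K(\mu_0, \mu) + \Var_{\sigma_0 b_0}(\log(\mu/\mu_0)) \lesssim \|\mu - \mu_0\|_{L^2}^2$, which is $\le \kappa$ for $\eps$ small. An analogous Taylor argument applied to the log-ratio of transition densities $p, p_0$ (which again lie in $[K', K]$ by Proposition~\ref{prop:transition}) reduces the two transition-density quantities to
\[
\KL(\vartheta_0,\vartheta) \vee \Var_{\sigma_0 b_0}\bigl(\log(p_0/p)\bigr) \;\lesssim\; \int_0^1\!\!\int_0^1 \mu_0(x)\,[(p-p_0)(\Delta,x,y)]^2\,dy\,dx,
\]
so the core task is to bound this weighted $L^2$-norm of the kernel $p - p_0$ by $c\eps^2$.

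For this core step I would use Duhamel's perturbation formula for the transition semigroups $P_\Delta = e^{\Delta L}$, $P_\Delta^0 = e^{\Delta L_0}$ associated to the generators $L f = \tfrac12\sigma^2 f'' + b f'$:
\[
P_\Delta - P_\Delta^0 \;=\; \int_0^\Delta P_{\Delta - s}\,(L - L_0)\,P_s^0 \,ds.
\]
Pairing $(p - p_0)$ with a test tensor $\phi(x)\psi(y) \in L^2(\mu_0\,dx\otimes dy)$ and integrating by parts rewrites the resulting bilinear form as
\[
\int_0^\Delta \!\int_0^1 (L - L_0) f_s(x) \, g_s(x)\,dx\,ds, \quad f_s := P_s^0 \psi,\ g_s := P_{\Delta-s}^*(\mu_0\phi),
\]
where the adjoint is taken in $L^2(dx)$. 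Expanding $(L-L_0)f_s = \tfrac12(\sigma^2 - \sigma_0^2)f_s'' + (b-b_0)f_s'$ and using the identity $\sigma^2 - \sigma_0^2 = \sigma^2\sigma_0^2(\sigma_0^{-2} - \sigma^{-2})$, the pairing splits into two pieces to which the Besov duality $|\int f g| \le \|f\|_{(B^\alpha_{1\infty})^*}\|g\|_{B^\alpha_{1\infty}}$ applies:
\[
\Bigl|\!\int (\sigma^{-2}-\sigma_0^{-2})\,\sigma^2\sigma_0^2 f_s'' g_s\Bigr| \le \|\sigma^{-2}-\sigma_0^{-2}\|_{(B^1_{1\infty})^*}\|\sigma^2\sigma_0^2 f_s'' g_s\|_{B^1_{1\infty}},
\]
\[
\Bigl|\!\int (b-b_0)\,f_s'\,g_s\Bigr| \le \|b-b_0\|_{(B^2_{1\infty})^*}\|f_s' g_s\|_{B^2_{1\infty}}.
\]
The $B^1_{1\infty}$- and $B^2_{1\infty}$-norms of the test products would then be controlled uniformly in $(\sigma,b)\in\Theta$ via parabolic smoothing for $P_s^0$ and $P_{\Delta-s}$ on $[0,1]$ with reflecting boundary conditions (Aronson/Schauder-type bounds, which are available since $\Theta$ forces uniform ellipticity and Lipschitz coefficients), combined with the fact that $\sigma^2, \sigma_0^2$ act as multipliers on Besov spaces of order $\le s$. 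The $s$-integral is handled by noting that at least one of $s, \Delta-s$ always exceeds $\Delta/2$, so derivatives can be parked on whichever factor is already smooth.

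The main obstacle in implementing this plan is the quantitative bookkeeping of derivatives, especially for the drift pairing: the $B^2_{1\infty}$-norm of $f_s' g_s$ requires as many as three derivatives to be distributed between $f_s = P_s^0\psi$ and $g_s = P_{\Delta-s}^*(\mu_0\phi)$, and the resulting $s^{-\alpha}$ or $(\Delta-s)^{-\alpha}$ blow-ups must integrate over $(0,\Delta)$. The Besov orders $1$ and $2$ that appear in the statement are exactly matched to the first- and second-order ill-posedness of recovering $\sigma^{-2}$ and $b$ from low-frequency observations of the diffusion, and this matching is precisely what makes the bookkeeping close up and delivers the desired bound at the $\eps^2$ scale.
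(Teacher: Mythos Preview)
Your reduction of all four quantities in $B_{\eps,\kappa}$ to the weighted $L^2$-norm of $p-p_0$ is fine and matches what the paper does (the paper routes through the Hellinger distance and Lemmas~8.2--8.3 of \cite{GhosalGhoshvanderVaart2000}, but the conclusion is the same). The difficulties are in your proposed core step, where there are two genuine gaps.

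\textbf{Tensor pairing controls the wrong norm.} Pairing $p-p_0$ with a single product $\phi(x)\psi(y)$ and taking the supremum over unit $\phi,\psi$ yields the \emph{operator norm} of $P_\Delta-P_\Delta^0$ (the top singular value), not the $L^2$-norm of the kernel, which equals the Hilbert--Schmidt norm. The quantity you must bound is $\sum_k\|(P_\Delta-P_\Delta^0)e_k\|_{L^2}^2$ for an orthonormal basis, and a uniform bound on tensor pairings does not give this sum. Your argument as written never addresses this, so it does not close.

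\textbf{The $s$-integral diverges.} For the drift pairing you need $\|f_s'g_s\|_{B^2_{1\infty}}$, which requires three derivatives in $L^1$. With $f_s=P_s^0\psi$, $g_s=P_{\Delta-s}^*(\mu_0\phi)$, Leibniz and Cauchy--Schwarz give contributions like $\|f_s'''\|_{L^2}\|g_s\|_{L^2}\lesssim s^{-3/2}$ near $s=0$; the remark that ``one of $s,\Delta-s$ exceeds $\Delta/2$'' does not help because the derivatives cannot all be parked on the smooth factor---the product rule distributes them. The analogous bound $\|\sigma^2\sigma_0^2 f_s''g_s\|_{B^1_{1\infty}}$ for the diffusion term needs $\|f_s'''\|_{L^2}$ as well. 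Hence $\int_0^\Delta$ diverges, and your closing sentence that the Besov orders ``make the bookkeeping close up'' is not justified by the estimates you propose.

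The paper avoids both issues by an entirely different mechanism. Instead of Duhamel, it restricts to the invariant subspace $V=\{f:\int f\,d\mu_0=0\}$, writes $P_\Delta^0|_V=f\bigl((L_0|_V)^{-1}\bigr)$ and $(P_\Delta-\mu_0(P_\Delta))|_V=f\bigl(((L-\mu_0(L))|_V)^{-1}\bigr)$ with $f(z)=e^{\Delta/z}$ Lipschitz on $(-\infty,0)$, and applies a Kittaneh-type operator inequality (Proposition~\ref{prop:kitt}) to obtain $\|P_\Delta-P_\Delta^0\|_{HS}\lesssim\|(L-\mu_0(L))|_V^{-1}-(L_0)|_V^{-1}\|_{HS}$. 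The inverse generators have \emph{explicit} Green's kernels $K_{\sigma b}$ derived in Subsection~\ref{invsec}, and the HS norm is then an integral of $(K_{\sigma b}-K_{\sigma_0 b_0})^2$ that is bounded directly by the three norms in the statement via Lemma~\ref{lem:K}. No time integral appears, and the HS structure is preserved from the outset; a residual term $\sum_{k\ge1}\mu_0(P_\Delta e_k)^2$ (arising because $P_\Delta$ does not preserve $V$) is handled separately and produces the $\|\mu-\mu_0\|_{L^2}$ contribution.
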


A crucial step in the proof of this key lemma is the observation, partly borrowed 
from \cite{gobetETAL2004}, that the $L^2([0,1]^2)$-distance between the 
transitions densities $p_{\sigma b}, p_{\sigma_0 b_0}$ is related to a 
suitable Hilbert--Schmidt (HS) norm of the difference between the corresponding 
transition operators. Using the semigroup representation $P_\Delta=e^{\Delta 
L}$ of the transition 
operators $P_{\Delta}$ we can then approximate the information distance on the 
underlying 
experiment by the HS-distance between the corresponding inverse operators of the 
infinitesimal generators $L$ of the underlying diffusions. In turn we can 
obtain 
analytic expressions for the Green's function of the inverses of these 
generators, which ultimately gives the reduction to the dual Besov norms 
appearing above. We split the proof into several steps, given in the following 
subsections.

\subsubsection{The infinitesimal generator $L$ and its inverse} \label{invsec}

We begin by defining the function $S(\cdot)=1/s'(\cdot)$, derived from the 
scale function $s(\cdot)$,
$$S(x):=\frac1{2}\sigma^2(x)\mu(x)=\frac1{2G}\exp\left(\int_{0}^{x}\frac{2b(y)}{
\sigma^2(y)}\d 
y\right),$$
with $G$ the normalising constant of the invariant 
density as in~\eqref{normconst}.
The infinitesimal generator $L=L_{\sigma b}$ of the diffusion (\ref{diffmref}) 
is given by the action
\begin{equation} \label{scal}
 Lf(x)=\frac{1}{2}\sigma^2(x)f''(x)+b(x)f'(x)=\frac1{\mu(x)}(S(x)f'(x))'
\end{equation}
where the domain of this unbounded operator on $L^2(\mu)$ is the 
subspace of the $L^2$-Sobolev space $H^2$ with Neumann boundary conditions
\[
 \dom(L)=\{f\in H^2([0,1]):f'(0)=f'(1)=0\}.
\]
We fix the invariant measure $\mu_0$ belonging to 
$\sigma_0$ and $b_0$ and 
consider $L=L_{\sigma b}$ on $L^2(\mu_0)$, which by the bound from above and 
away from zero 
of the invariant densities is the same set of functions as $L^2(\mu)$.
We introduce
\[
 V:=\left\{f\in L^2(\mu_0):\int_0^1 f \d \mu_0=0\right\}  \text{~~ and ~~} 
V^\perp :=\{f\in L^2(\mu_0): f \text{ constant}\}.
\]
We denote by $\mu_0(L)$ the operator that sends $f$ to the constant function 
$\mu_0(Lf)=\int Lf(x)\mu_0(x)\d x$.
We observe that the operator $L-\mu_0(L)$ leaves the space $V$ invariant, and 
denote by $(L-\mu_0(L))|_V$ its restriction to $V$. Next we introduce 
an 
integral operator $J$ and show that $J$ is an explicit representation of the 
inverse of 
$(L-\mu_0(L))|_V$.
We define
\[
 Jf(x)=\int_0^1 K(x,z)f(z)\mu_0(z)\d z, ~~~ f \in V,
\]
with kernel $K= K_{\sigma b}$ defined as
\[K(x,z)=2G \left(H(x,z)-\frac{\mu(z)}{\mu_0(z)}\int_0^1H(x,y)\mu_0(y)\d 
y\right)\]
where
\begin{align*}
 &H(x,z) = H_{\sigma 
b}(x,z)\\
&=\int_0^1\left(\1_{[z,x]}(y)-\1_{[z,1]}(y)\int_y^1\mu_0(x)\d 
x\right)\exp\left(-\int_0^y\frac{2b(v)}{\sigma^2(v)}\d v\right)\d 
y\frac{\mu(z)}{\mu_0(z)}.
\end{align*}
Here $\1_{[z,x]}=0$ if $x<z$. Writing $\1_{[z,x]}(y) = \1_{[0,x]}(y) \1_{[z,1]}(y)$ and using $\1_{[z,1]}(y)=\1_{[0,y]}(z)$ as well as Fubini's theorem, an alternative representation of $J$ is given by
\begin{align*}
 Jf(x)=2G\int_0^1\int_0^y\left(f(z)-\int_0^1 f \d\mu\right)\mu(z)\d 
z\left(\1_{[0,x]}(y)-\int_y^1\d\mu_0\right)\\
\exp\left(-\int_0^y\frac{2b(v)}{
\sigma^2(v)}\d v\right)\d y.
\end{align*}
We compute the first two derivatives
\begin{align}
&\frac{\d }{\d x}(Jf)(x)=2G\int_0^x\left(f(z)-\int f \d\mu\right)\mu(z)\d z 
\exp\left(-\int_0^x\frac{2b(v)}{\sigma^2(v)}\d v\right),\label{firstderiv}
\\
&\frac{\d^2 }{\d 
x^2}(Jf)(x)=2G\left(f(x)-\int f \d\mu\right)\mu(x)
\exp\left(-\int_0^x\frac{2b(v)}{\sigma^2(v)}\d 
v\right)\notag\\
&+2G\int_0^x\left(f(z)-\int f \d\mu\right)\mu(z)\d z 
\left(-\frac{2b(x)}{\sigma^2(x)}\right)
\exp\left(-\int_0^x\frac{2b(v)}{\sigma^2(v)}\d v\right).\notag
\end{align}
It follows
\begin{align*}
 LJf(x)&=\frac{1}{2}\sigma^2(x)\frac{\d^2 }{\d 
x^2}(Jf)(x)+b(x)\frac{\d }{\d 
x}(Jf)(x)\\
&=\frac{1}{2}\sigma^2(x)2G\left(f(x)-\int f \d\mu\right)\mu(x)
\exp\left(-\int_0^x\frac{2b(v)}{\sigma^2(v)}\d 
v\right)\\
&=f(x)-\int f \d\mu
\end{align*}
and thus 
\(
 (L-\mu_0(L))Jf(x)
=f(x)-\int f \d\mu_0.
\)
Consequently $(L-\mu_0(L))J$ is the identity operator on $V$. We see from the first 
derivative in~\eqref{firstderiv} that 
$Jf\in\dom(L)$. Using $\1_{[0,x]}(y)=\1_{[y,1]}(x)$ and Fubini's theorem one
also sees that $\int_0^1Jf(x)\mu_0(x)\d x=0$ and consequently $Jf\in\dom(L)\cap 
V$.

To see that $(L-\mu_0(L))|_V$ is injective suppose that for 
$f,f_1\in\dom(L)\cap V$ we 
have $(L-\mu_0(L))f=(L-\mu_0(L))f_1$ or equivalently $Lf=Lf_1+c_0$. This 
implies by integration with respect to $d\mu(x)$ that 
$S(x)f'(x)=S(x)f_1'(x)+c_0\int_0^x\d\mu+c_1$ with 
$c_1=c_0=0$ since $f'(0)=f'_1(0)=f'(1)=f_1'(1)=0$. Another integration gives 
$f(x)=f_1(x)+c_2$ 
and $c_2=0$ by $f,f_1\in V$. We conclude that $f=f_1$ showing that 
$(L-\mu_0(L))|_V$ 
is injective, and by what precedes the inverse mapping $(L-\mu_0(L))|_V^{-1}: V 
\to \dom(L) \cap V$ exists, and has integral representation $J$. Note that when $L=L_{\sigma_0 b_0}$ then in view of (\ref{scal}) we have $\mu_0(L_{\sigma_0 b_0})(f)=0$ for all $f \in \dom (L)$ and hence the same integral representation follows for $(L_{\sigma_0 b_0})|_V^{-1}=(L_{\sigma_0 b_0}-\mu_0(L_{\sigma_0 b_0}))|_V^{-1}$.

\smallskip

The following lemma bounds the HS-norm distance between the Green kernels and 
is proved in the appendix.
\begin{lemma}\label{lem:K}
There exists $\tilde C>0$ such that for all $(\sigma,b),(\sigma_0,b_0)\in\Theta$
\begin{align*}
&\quad \left(\int_0^1\int_0^1 (K_{\sigma b}-K_{\sigma_0 
b_0})^2(x,z)\mu_0(x)\mu_0(z)\d x \d 
z\right)^{1/2}\\
&\le \tilde C \|\mu_{\sigma b}-\mu_0\|_{L^2([0,1])}
+\tilde C
\left\|\frac{1}{\sigma^2}-\frac{1}{\sigma_0^2}\right\|_{(B^{1}_{1\infty} )^* }
+\tilde C \left\|b-b_0\right\|_{(B^{2}_{1\infty})^*}.
\end{align*}
\end{lemma}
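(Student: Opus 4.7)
The strategy is to decompose $K_{\sigma b}-K_{\sigma_0 b_0}$ into contributions that are linear in each of the three target quantities $\mu-\mu_0$, $\sigma^{-2}-\sigma_0^{-2}$ and $b-b_0$, exploiting the fact that all remaining multiplicative factors ($\mu,\sigma^2,G$) are uniformly bounded above and below on $\Theta$ by (\ref{invid})--(\ref{normconst}) and the definition of $\Theta$. The central algebraic identity, which follows from (\ref{invid}), is
$$\exp\left(-\int_0^y \frac{2b(v)}{\sigma^2(v)}\,dv\right) = \frac{1}{G_{\sigma b}\,\sigma^2(y)\,\mu(y)},$$
which replaces the nonlinear exponential dependence of the kernel on $(b,\sigma)$ by a rational expression in $(G,\sigma^2,\mu)$ that is much easier to linearize.

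\smallskip

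I would first expand $H_{\sigma b}-H_{\sigma_0 b_0}$. The multiplicative prefactor $\mu(z)/\mu_0(z)$ contributes a term linear in $\mu-\mu_0$, controlled in $L^2(\mu_0\otimes\mu_0)$ by $\|\mu-\mu_0\|_{L^2}$ since $\mu_0$ is bounded away from zero. The difference of the $y$-integrals, after the identity above and the algebraic decomposition
$$G_0\sigma_0^2\mu_0-G\sigma^2\mu = (G_0-G)\sigma_0^2\mu_0 + G(\sigma_0^2-\sigma^2)\mu_0 + G\sigma^2(\mu_0-\mu),$$
splits into three pointwise integrals against the piecewise-constant weight $g_{x,z}(y) := \1_{[z,x]}(y) - \1_{[z,1]}(y)\int_y^1\mu_0$ already appearing in $H$. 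The $(\mu_0-\mu)$-piece is bounded by $\|\mu-\mu_0\|_{L^2}$ via Cauchy--Schwarz; the $(\sigma_0^2-\sigma^2)$-piece, after cancelling $\sigma^2\sigma_0^2$, becomes the pairing $\langle \sigma^{-2}-\sigma_0^{-2},g_{x,z}/(G_0\mu)\rangle$, whose test function has only finitely many jumps in $y$ and whose wavelet coefficients at scale $l$ sum to $O(2^{-l/2})$, so it lies in $B^1_{1\infty}$ with norm bounded uniformly in $(x,z)$, yielding the claimed $\|\sigma^{-2}-\sigma_0^{-2}\|_{(B^1_{1\infty})^*}$ contribution; the $(G_0-G)$-piece reduces to the scalar $|G-G_0|$ times a uniformly bounded integral, which I treat next.

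\smallskip

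To bound $|G_{\sigma b}-G_{\sigma_0 b_0}|$ I would work directly from (\ref{normconst}) and use the mean-value theorem $e^F-e^{F_0}=e^{\widetilde F}(F-F_0)$. Writing $F-F_0=\int_0^y[2(b-b_0)/\sigma^2 + 2b_0(\sigma^{-2}-\sigma_0^{-2})]\,dv$ and applying Fubini one obtains
$$G-G_0 = \langle \sigma^{-2}-\sigma_0^{-2},e^F\rangle + \langle b-b_0,\tfrac{2}{\sigma^2}\,\Gamma\rangle + \langle \sigma^{-2}-\sigma_0^{-2},2b_0\,\Gamma\rangle,$$
where $\Gamma(v):=\int_v^1 \sigma_0^{-2}(y)\,e^{\widetilde F(y)}\,dy$. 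The essential observation is that \emph{$\Gamma$ is continuously differentiable} in $v$ (its derivative $-\sigma_0^{-2}(v)e^{\widetilde F(v)}$ is continuous), and hence lies in $B^2_{1\infty}$ with bounded norm. Therefore the $(b-b_0)$-pairing is bounded by $\|b-b_0\|_{(B^2_{1\infty})^*}$, while the two $(\sigma^{-2}-\sigma_0^{-2})$-pairings are bounded by $\|\sigma^{-2}-\sigma_0^{-2}\|_{(B^1_{1\infty})^*}$. This is also the structural reason for the asymmetry between the two dual Besov norms in the statement: $b-b_0$ only ever enters the kernel through the exponential, so Fubini always produces a genuinely $C^1$ test function against it, whereas $\sigma^{-2}-\sigma_0^{-2}$ can appear as a direct pointwise multiplier in the $y$-integrand and is therefore paired against functions with jump discontinuities, forcing the stronger $B^1_{1\infty}$-duality.

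\smallskip

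The analogous decomposition is then carried out for the centering term $\mu(z)\mu_0(z)^{-1}\int_0^1 H(x,w)\mu_0(w)\,dw$ in the definition of $K$; the $w$-average does not alter the pointwise-in-$(x,z)$ Besov control of the test functions that arise. Assembling all pieces and taking the weighted $L^2(\mu_0(x)\mu_0(z)dx\,dz)$-norm (which, since every test-function norm is uniformly bounded in $(x,z)$, contributes only an absolute constant) yields the claimed inequality. The principal obstacle is bookkeeping: identifying, for each of the many terms generated by the expansion, which of the three target norms governs it; verifying that the resulting test function has uniformly bounded norm in the correct Besov space (especially the delicate fact that $\Gamma$ is $C^1$ while the piecewise-constant multipliers $g_{x,z}/(G_0\mu)$ lie only in $B^1_{1\infty}$); and checking that all remaining smooth prefactors are uniformly bounded on $\Theta$ using the defining constraints and the explicit formulas (\ref{invid}), (\ref{normconst}) for $\mu$ and $G$.
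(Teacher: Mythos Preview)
Your overall strategy is sound and in one respect cleaner than the paper's: rewriting $\exp(-\int_0^y 2b/\sigma^2\,dv)=(G\sigma^2\mu)^{-1}$ inside $\tilde H$ removes the explicit $b$-dependence there, so that $\tilde H_{\sigma b}-\tilde H_{\sigma_0 b_0}$ splits into $\mu-\mu_0$, $\sigma^{-2}-\sigma_0^{-2}$ and $G-G_0$ contributions only, deferring all $b-b_0$ terms to the scalar estimate for $G-G_0$. The paper instead keeps the exponential throughout and linearises it via the explicit factorisation $e^f-e^{f_0}=h\,(f-f_0)$ with $h=e^{f_0}\sum_{k\ge1}(f-f_0)^{k-1}/k!$, producing a direct $b-b_0$ term already at the level of $\tilde H$. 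Both routes are valid; yours is slightly more economical.

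There is, however, a genuine gap in your $G-G_0$ bound. You invoke the pointwise mean-value theorem $e^F-e^{F_0}=e^{\tilde F}(F-F_0)$ and then assert that $\Gamma(v)=\int_v^1\sigma_0^{-2}e^{\tilde F}\,dy$ is $C^1$ ``and hence lies in $B^2_{1\infty}$''. Both steps fail. First, the pointwise MVT yields only a measurable selection $\tilde F(y)$ between $F(y)$ and $F_0(y)$, so $y\mapsto e^{\tilde F(y)}$ need not be continuous. Second, and more fundamentally, $C^1\not\subset B^2_{1\infty}$: membership in $B^2_{1\infty}$ requires the \emph{derivative} to lie in $B^1_{1\infty}$ (equivalently, to be of bounded variation), not merely to be continuous. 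The repair, which is exactly what the paper does, is to replace $e^{\tilde F}$ by the explicit multiplier $h(y)=\int_0^1 e^{tF(y)+(1-t)F_0(y)}\,dt$ (equivalently the power-series form above). Since $F,F_0$ are bounded and of bounded variation (indeed Lipschitz, as primitives of bounded functions), $h$ is bounded and of bounded variation; hence $\sigma_0^{-2}h\in B^1_{1\infty}$ and its primitive $\Gamma$ lies in $B^2_{1\infty}$, after which the pointwise multiplier theorem gives $\sigma^{-2}\Gamma\in B^2_{1\infty}$ as you need. With this correction your argument goes through.
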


\subsubsection{Proof of Lemma~\ref{suplem}}
We have
\[
\KL((\sigma_0,b_0),(\sigma,b))
=\E_{\sigma_0 
b_0}
\left[\log\left(\frac{\mu_0(X_0)p_{\sigma_0 b_0}(\Delta,X_0,X_\Delta)}{
\mu_0(X_0)p_{\sigma 
b}(\Delta,X_0,X_\Delta)}\right)\right].
\]
We see that this is the Kullback--Leibler divergence between the probability 
measures 
corresponding to the densities  $\mu_0p_{\sigma_0 b_0}=\mu_0(x)p_{\sigma_0 
b_0}(\Delta, x,y)$ and $\mu_0p_{\sigma b}=\mu_0(x)p_{\sigma b}(\Delta, x,y)$ 
with respect to the 
Lebesgue measure on $[0,1]^2$.
By Lemma~8.2 in \cite{GhosalGhoshvanderVaart2000} we have 
\[\KL((\sigma_0,b_0),(\sigma,b))\le2 h^2(\mu_0p_{\sigma b},\mu_0p_{\sigma_0 
b_0})\left\|\frac{p_{\sigma_0 b_0}}{p_{\sigma b}}\right\|_\infty\]
where $h^2(p,q)=\int (\sqrt p - \sqrt q)^2$ is the usual Hellinger distance between two densities $p,q$. The transition densities are bounded from above and from below in view of 
Proposition~\ref{prop:transition}. Thus the Hellinger distance can be bounded by the $L^2$-norm of the 
difference between the densities
\begin{align*}
 h^2(\mu_0p_{\sigma b},\mu_0p_{\sigma_0 
b_0})\le \left\|\frac{1}{\mu_0p_{\sigma_0 b_0}}\right\|_\infty \|\mu_0p_{\sigma 
b}-\mu_0p_{\sigma_0 b_0}\|_{L^2([0,1]^2)}^2.
\end{align*}
We want to bound the last quantity in terms of the Hilbert--Schmidt norm 
distance  
$\|P^{\sigma b}_\Delta-P^{\sigma_0 
b_0}_\Delta\|_{HS}$ between the transition operators of the respective 
diffusions acting on $L^2(\mu_0)$. We have the integral representation
\begin{align*}
 (P^{\sigma b}_\Delta-P^{\sigma_0 
b_0}_\Delta)f&=\int (p_{\sigma b}(x,y)-p_{\sigma_0 b_0}(x,y))f(y)\d y
\\
&=\int \frac{p_{\sigma b}(x,y)-p_{\sigma_0 b_0}(x,y)}{\mu_0(y)}f(y)\mu_0(y)\d y
\end{align*}
and thus the Hilbert--Schmidt norm is given by
\begin{align*}
 \|P^{\sigma b}_\Delta-P^{\sigma_0 
b_0}_\Delta\|^2_{HS}
=\int\int \left(\frac{p_{\sigma b}(x,y)-p_{\sigma_0 
b_0}(x,y)}{\mu_0(y)}\right)^2\mu_0(x)\mu_0(y)\d x\d y\\
=\int\int \left({p_{\sigma b}(x,y)-p_{\sigma_0 
b_0}(x,y)}\right)^2\frac{\mu_0(x)}{\mu_0(y)}\d x\d y.
\end{align*}
In summary we can bound $\KL((\sigma_0,b_0),(\sigma,b))$ by a constant multiple of
\begin{align*}
 \|\mu_0p_{\sigma b}-\mu_0p_{\sigma_0 b_0}\|_{L^2([0,1]^2)}^2 \le  
\|\mu_0\|_\infty^2\|P^{\sigma b}_\Delta-P^{\sigma_0 b_0}_\Delta\|_{HS}^2.
\end{align*}

Let $(e_k)_{k\in\N}$ be any orthonormal basis of $L^2(\mu_0)$ such that 
$e_0=1$ (for instance we can take the eigen-basis of the operator 
$P_\Delta^{\sigma_0 b_0}$). Then
\begin{align*}
 \|P^{\sigma 
b}_\Delta-P^{\sigma_0 
b_0}_\Delta\|_{HS}&=\sum_{k=0}^{\infty}\|(P_\Delta^{\sigma 
b}-P_\Delta^{\sigma_0 b_0})e_k\|^2_{L^2(\mu_0)} =\sum_{k=1}^{\infty}\|(P_\Delta^{\sigma 
b}-P_\Delta^{\sigma_0 b_0})e_k\|^2_{L^2(\mu_0)}.
\end{align*}
We denote by $\mu_0(P_\Delta^{\sigma b})$ the operator that sends $f$ to 
the constant function $\mu_0(P_\Delta^{\sigma b}f)=\int  P_\Delta^{\sigma 
b}f(x)\mu_0(x)\d x$ and write the Hilbert--Schmidt norm as 
\begin{align*}
 \|P^{\sigma 
b}_\Delta-P^{\sigma_0 
b_0}_\Delta\|_{HS}^2
&=\sum_{k=1}^{\infty}
\|(P_\Delta^{\sigma 
b}-\mu_0(P_\Delta^{\sigma 
b})-P_\Delta^{\sigma_0 b_0}+\mu_0(P_\Delta^{\sigma 
b}))e_k\|^2_{L^2(\mu_0)}\\
&=\sum_{k=1}^{\infty}
\|(P_\Delta^{\sigma 
b}-\mu_0(P_\Delta^{\sigma 
b})-P_\Delta^{\sigma_0 b_0})e_k\|^2_{L^2(\mu_0)}+\sum_{k=1}^{\infty}
\mu_0(P_\Delta^{\sigma 
b}e_k)^2,
\end{align*}
where we have used that all $e_k$ with $k\ge1$ are orthogonal to $e_0$ and thus 
to all constant functions, and that $P_\Delta^{\sigma b}-\mu_0(P_\Delta^{\sigma 
b}), P_\Delta^{\sigma_0 b_0}$ leave the space $V=\{f \in L^2(\mu_0): \int f 
d\mu_0 = 0\}$ invariant (note that $\mu_0$ is the invariant measure for 
$P_\Delta^{\sigma_0 b_0}$). By the last observation we can write
\begin{align}\label{HSdecomp}
 \|P^{\sigma b}_\Delta-P^{\sigma_0 b_0}_\Delta\|_{HS}^2=
\|(P_\Delta^{\sigma b}-\mu_0(P_\Delta^{\sigma b}))|_V-P_\Delta^{\sigma_0 b_0}|_V\|^2_{HS}+\sum_{k=1}^{\infty}
\mu_0(P_\Delta^{\sigma 
b}e_k)^2.
\end{align}
We represent $P_\Delta^{\sigma_0 b_0}|_V = \exp (\Delta L_{\sigma_0 
b_0}|_V)$ and
\begin{equation*}
(P_\Delta^{\sigma b}-\mu_0(P_\Delta^{\sigma 
b}))|_V=\exp(\Delta(L_{\sigma b}-\mu_0(L_{\sigma b}))|_V),
\end{equation*}
possible since, by standard properties of Markov semigroups, the derivatives $$\frac{d}{d\Delta}(P_\Delta^{\sigma 
b}-\mu_0(P_\Delta^{\sigma b}))|_V(f)= \frac{d}{d\Delta}\exp(\Delta(L_{\sigma b}-\mu_0(L_{\sigma 
b}))|_V)(f)$$ coincide for all $f \in V \cap \dom(L)$, and setting $\Delta=0$ gives the identity $id|_{V}$ on 
both sides in the last but one display.

\smallskip

For every $(\sigma, b)$ the operators $L_{\sigma b}$  have a discrete 
non-positive spectrum $\{\lambda_k: k \in \mathbb N\}$ (e.g., p.97-100 in 
\cite{BGL14}), with eigenfunctions $u_k$ orthonormal in $L^2(\mu)$. One checks 
directly that the spectra of $L_{\sigma_0 b_0}|_V, (L_{\sigma b} - 
\mu_0(L_{\sigma b}))|_V$ equal the ones of $L_{\sigma_0 b_0}, L_{\sigma b}$ but 
with eigenvalue $\{0\}$ removed (corresponding to $u_0=1$), and, in the second 
case, for different eigenfunctions $u_k - \mu_0(u_k)$, orthonormal for the 
scalar product $\langle \cdot, \cdot \rangle_{V,\mu} \equiv \langle \cdot - \int 
(\cdot) d \mu, \cdot - \int (\cdot) d\mu \rangle_\mu$ on $V$. Their inverses 
$L_{\sigma_0 b_0}|_V^{-1}$, $(L_{\sigma b}-\mu_0(L_{\sigma b}))|_V^{-1}$ derived 
in Subsection \ref{invsec} are bounded linear operators on $V$ (in view of their 
integral representations), and in view of their spectral representation they are 
also symmetric and thus self-adjoint for the scalar products $\langle \cdot, 
\cdot \rangle_{\mu_0}$ and $ \langle \cdot, \cdot \rangle_{V, \mu}$ on $V$, 
respectively.  

Using the functional calculus and what precedes we can hence represent 
$(P_\Delta^{\sigma b}-\mu_0(P_\Delta^{\sigma b}))|_V$ and $(P_\Delta^{\sigma_0 b_0})|_{V}$ as the composition of the inverses 
$(L_{\sigma b}-\mu_0(L_{\sigma b}))|_V^{-1}$, $L_{\sigma_0 b_0}|_V^{-1}$ and the 
function $f:z\to \exp(\Delta z^{-1})$, which is Lipschitz continuous on 
$(-\infty,0)$, with Lipschitz constant $\Lambda$. Proposition~\ref{prop:kitt} 
in the Appendix states that if $f$ is Lipschitz continuous with Lipschitz constant 
$\Lambda$ on the union of the spectra of two operators $T_1$ and $T_2$, which 
are self-adjoint with respect to different scalar products 
$\langle\cdot,\cdot\rangle_{V, \mu}$ and $\langle\cdot,\cdot\rangle_{\mu_0}$ then we have
\[
 \|f(T_1)-f(T_2)\|_{HS}\le\Lambda\|T_1-T_2\|_{HS},
\]
where the HS-norm is for operators from $(V,\langle\cdot,\cdot\rangle_{\mu_0})$ to $(V,\langle\cdot,\cdot\rangle_{V, \mu})$ (and where, strictly speaking, the operators in the preceding display are composed with suitable identity operators between these spaces). Since 
$\langle\cdot,\cdot\rangle_{V, \mu}$ 
and $\langle\cdot,\cdot\rangle_{\mu_0}$ induce equivalent norms on $V$, up to 
constants the same holds when the Hilbert--Schmidt norm is interpreted for 
operators from $(V,\langle\cdot,\cdot\rangle_{\mu_0})$ to 
$(V,\langle\cdot,\cdot\rangle_{\mu_0})$.
We thus obtain from the results in Subsection \ref{invsec} that
\begin{align*}
&\|(P_\Delta^{\sigma b}-\mu_0(P_\Delta^{\sigma b}))|_V-P_\Delta^{\sigma_0 
b_0}|_V\|^2_{HS} \lesssim \|(L_{\sigma 
b}-\mu_0(L_{\sigma b}))|_V^{-1}-L_{\sigma_0 b_0}|_V^{-1}\|^2_{HS}\\
&= \int_0^1\int_0^1(K_{\sigma b}-K_{\sigma_0 
b_0})^2(x,z)\mu_0(x)\mu_0(z)\d x \d z\equiv A.
\end{align*}

Before we bound $A$ further, let us next consider the second term in \eqref{HSdecomp}. Using 
$\int_0^1\mu_{\sigma b}(x)p_{\sigma 
b}(\Delta,x,y)\d x=\mu_{\sigma b}(y)$ and Parseval's identity we obtain
\begin{align*}
&\quad \sum_{k=1}^{\infty}\mu_0(P_\Delta^{\sigma 
b}e_k)^2= \sum_{k=1}^{\infty}[\mu_0(P_\Delta^{\sigma b}e_k-e_k)]^2\\
&= \sum_{k=1}^{\infty}\left(\int_0^1 \left[\int_0^1\mu_0(x)p_{\sigma 
b}(\Delta,x,y)e_k(y)\d y-\mu_0(x)e_k(x)\right]\d x\right)^2\\
&\le 2\sum_{k=1}^{\infty}\left(\int_0^1
\int_0^1(\mu_0(x)-\mu_{\sigma b}(x))p_{\sigma 
b}(\Delta,x,y)e_k(y)\d y\d x\right)^2\\
&\quad +2\sum_{k=1}^{\infty}\left(\int_0^1
(\mu_{\sigma b}(x)-\mu_0(x))e_k(x)\d x\right)^2 \displaybreak[0] \\
&= 2\sum_{k=1}^\infty\left\langle\frac{\int_0^1(\mu_{\sigma b}-\mu_0)(x)p_{\sigma 
b}(\Delta,x,\cdot)\d x}{\mu_0}, e_k\right\rangle^2_ {\mu_0 }+
2\sum_{k=1}^\infty\left\langle\frac{\mu_{\sigma 
b}-\mu_0}{\mu_0},e_k\right\rangle^2_{ \mu_0 } \\
&=2\int_0^1\left(\frac{\int_0^1(\mu_{\sigma b}-\mu_0)(x)p_{\sigma 
b}(\Delta,x,y)\d x}{\mu_0(y)}\right)^2
\mu_0(y)\d y \\
& ~+2\int_0^1\left(\frac{\mu_{\sigma 
b}(y)-\mu_0(y)}{\mu_0(y)}\right)^2
\mu_0(y)\d y,
\end{align*}
which can be bounded by $B \approx \|\mu_{\sigma b}-\mu_0\|^2_{L^2([0,1])}$,
using that the transition density is bounded from above and $\mu_0$ away from 
zero. Using Lemma~\ref{lem:K} for term $A$ we obtain, for some constant $C>0$ and $\bar C$ large enough, the overall bound $$\KL((\sigma_0, b_0),(\sigma,b))\le C(A+B) \le \eps^2.$$ 

In order to see
\begin{equation}\label{varlog}
  \Var_{\sigma_0 b_0}
\left(\log\frac{p_{\sigma b}(\Delta,X_{0},X_{\Delta})}{p_{\sigma_0 b_0}(\Delta, 
X_{0},X_{\Delta})}
\right)\le2\eps^2
\end{equation}
we bound the variance by the second moment and use
Lemma~8.3 in \cite{GhosalGhoshvanderVaart2000}, which implies that
\[\E_{\sigma_0 b_0}\left[\left|\log
\frac{\mu_0 p_{\sigma b}}{\mu_0 p_{\sigma_0 b_0}}\right|^2\right]\le4 
h^2(\mu_0p_{\sigma 
b},\mu_0p_{\sigma_0 
b_0})\left\|\frac{p_{\sigma_0 b_0}}{p_{\sigma b}}\right\|_\infty.\]
Proceeding as for the Kullback--Leibler divergence above shows~\eqref{varlog}.

It remains to show that
\begin{align}\label{kappabounds}
 \K(\mu_{\sigma_0 b_0},\mu_{\sigma b})\le \kappa\quad\text{and}\quad
\Var_{\sigma_0 b_0}
\left(\log\frac{\mu_{\sigma b}(X_0)}{\mu_{\sigma_0 b_0}(X_0)}\right)\le 
2\kappa.
\end{align}
By Lemmas~8.2 and~8.3 in \cite{GhosalGhoshvanderVaart2000} it suffices to bound $h^2(\mu_{\sigma 
b},\mu_{\sigma_0 
b_0})\left\|\mu_{\sigma_0 b_0}/\mu_{\sigma b}\right\|_\infty.$ Using that $\mu_{\sigma b}$ and $\mu_{\sigma_0 b_0}$ are bounded from above and 
from below and bounding the Hellinger distance by the $L^2$-norm, the Hellinger distance is bounded by a multiple of
\(\|\mu_{\sigma b}-\mu_{\sigma_0 
b_0}\|_{L^2([0,1])}^2
\). So for $\eps>0$ small 
enough we have \eqref{kappabounds}.

\subsection{Construction of tests}

We will now construct the tests needed in Theorem~\ref{thm:contract}. The tests 
are based on the spectral estimators constructed in \cite{gobetETAL2004}, which 
are defined by
\begin{align}
 \hat \sigma^2(x)&:=\frac{2\Delta^{-1}\log(\hat\kappa_1)\int_0^x\hat 
u_1(y)\hat\mu(y)\d y}{\hat u_1'(x)\hat\mu(x)}, \label{sigest}\\
 \hat b(x)&:=\Delta^{-1}\log(\hat\kappa_1)\frac{\hat u_1(x)\hat 
u_1'(x)\hat\mu(x)-\hat u_1''\int_0^x\hat 
u_1(y)\hat\mu(y)\d y}{\hat u_1'(x)^2\hat\mu(x)}, \label{best}
\end{align}
where $\hat \kappa_1, \hat u_1$ are estimates of the second largest eigenvalue 
and associated eigenfunction of the operator $P^{\sigma b}_\Delta$, and 
where $\hat \mu$ is defined in (\ref{muest}) below. Using the concentration 
inequality Theorem \ref{empconinequ} we can prove the following for these 
estimators.
\begin{theorem}\label{thm:conest}
Let $s >1 $ and let $\eps_n$ be such that $n^{-(s+1)/(2s+3)} \lesssim \eps_n 
\lesssim n^{-3/8} (\log n)^{-1/2}$. For all $D>0$ there exists $R>0$ such that 
for $n$ large enough  we have 
uniformly in  $\vartheta = (\sigma, b) \in \Theta_s$
\begin{align*}
 \PP_{\vartheta}\left(\|\hat \sigma^2-\sigma^2\|_{L^2([A,B])}\ge R n \eps_n^3
\text{ or }
\|\hat b-b\|_{L^2([A,B])}\ge R n^2 \eps_n^5\right) \le e^{-D n \eps_n^2}.
\end{align*}
\end{theorem}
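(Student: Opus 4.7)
The approach is to follow the structure of the spectral analysis in \cite{gobetETAL2004}, but to replace their moment-based estimates with exponential concentration derived from Theorem~\ref{empconinequ}. The estimators (\ref{sigest})--(\ref{best}) are nonlinear plug-ins built from three ingredients: the empirical invariant density $\hat\mu$, and the empirical second eigenpair $(\hat\kappa_1,\hat u_1)$ of a Galerkin projection $\hat P_\Delta$ of the transition operator onto a wavelet resolution space $V_{J_n}$ of dimension $D_{J_n}\simeq 2^{J_n}$, with $J_n$ to be chosen. I would carry out the following steps.

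\textbf{Step 1 (Concentration of the empirical operators).} For the invariant density estimator, apply Theorem~\ref{empconinequ} (univariate version) to the class $\{\psi_{lk}: l\le J_n\}$: with dimension $d\simeq 2^{J_n}$, $\sup_f\|f\|_{L^2(\mu)}\lesssim 1$ and $\sup_f\|f\|_\infty\lesssim 2^{J_n/2}$, this yields the variance bound $\|\hat\mu-\Pi_{V_{J_n}}\mu_0\|_{L^2}\lesssim\sqrt{(2^{J_n}+x)/n}+\log(n)2^{J_n/2}(2^{J_n}+x)/n$ on an event of probability $\ge 1-\kappa e^{-x}$, plus the usual wavelet bias $2^{-J_n(s+1)}$. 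For the empirical transition operator $\hat P_\Delta$, apply the bivariate Theorem~\ref{coninequmulti} (or rather its empirical-process corollary Theorem~\ref{empconinequ}) to the tensor-product class $\{\psi_{lk}\otimes\psi_{l'k'}\}$ indexed by the $D_{J_n}^2\simeq 2^{2J_n}$ pairs, where $\sup\|f\|_\infty\lesssim 2^{J_n}$ and $\sup\|f\|_{L^2(\mu_2)}\lesssim 1$. This gives
\[\|\hat P_\Delta-\Pi_{V_{J_n}}P_\Delta\Pi_{V_{J_n}}\|_{HS}\lesssim \sqrt{(D_{J_n}^2+x)/n}+\log(n)2^{J_n}(D_{J_n}^2+x)/n\]
with probability $\ge 1-2\kappa e^{-x}$, plus the Galerkin bias $\|(\Pi_{V_{J_n}}P_\Delta\Pi_{V_{J_n}}-P_\Delta)u_1\|_{L^2}\lesssim 2^{-J_n(s+1)}$ since $u_1\in H^{s+1}$ when $(\sigma,b)\in\Theta_s$.

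\textbf{Step 2 (Spectral perturbation).} The first spectral gap of $P_\Delta$ is bounded below uniformly in $\Theta_s$: this follows from Proposition~\ref{prop:transition} via standard mixing estimates for reversible Markov chains. Thus Davis--Kahan and Weyl-type perturbation bounds yield $|\hat\kappa_1-\kappa_1|$ and $\|\hat u_1-u_1\|_{L^2(\mu_0)}$ of the same order as the HS-bound in Step~1. Since $\hat u_1\in V_{J_n}$ is finite-dimensional, Bernstein's inequality for wavelet series transfers this control to the norm $\|\hat u_1-u_1\|_{H^k}\lesssim 2^{kJ_n}\|\hat u_1-u_1\|_{L^2}+2^{-J_n(s+1-k)}$ for $k=1,2$, at the cost of powers of the resolution $2^{J_n}$ that encode the ill-posedness.

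\textbf{Step 3 (Plug-in and rate calibration).} Set $x=Dn\eps_n^2$. The upper bound $\eps_n\lesssim n^{-3/8}(\log n)^{-1/2}$ is exactly what ensures $(\log n)2^{J_n}x/n\lesssim\sqrt{x/n}$ for the admissible resolutions, so the sub-Gaussian regime of Theorem~\ref{empconinequ} governs all error terms. On the event of probability $\ge 1-e^{-Dn\eps_n^2}$ where the above concentration holds and additionally $\hat u_1'$ and $\hat\mu$ are bounded below on $[A,B]$ (guaranteed since the true $u_1'$ and $\mu_0$ are bounded below, as in \cite{gobetETAL2004}), the nonlinear maps (\ref{sigest})--(\ref{best}) are Lipschitz in their arguments, so a direct Taylor expansion gives
\begin{align*}
\|\hat\sigma^2-\sigma^2\|_{L^2([A,B])}&\lesssim |\hat\kappa_1-\kappa_1|+\|\hat\mu-\mu_0\|_{L^2}+\|\hat u_1-u_1\|_{H^1},\\
\|\hat b-b\|_{L^2([A,B])}&\lesssim |\hat\kappa_1-\kappa_1|+\|\hat\mu-\mu_0\|_{L^2}+\|\hat u_1-u_1\|_{H^2},
\end{align*}
with higher powers of $2^{J_n}$ appearing only for $\hat b$. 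Choosing $2^{J_n}$ as a power of $n\eps_n^2$ (balancing the stochastic term $2^{(k+1)J_n}\sqrt{\eps_n^2}$ against the bias $2^{-J_n(s+1-k)}$ for $k=1,2$ respectively) produces the target bounds $R n\eps_n^3$ and $R n^2\eps_n^5$.

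\textbf{Main obstacle.} The delicate point is that the ill-posedness multiplies the statistical error of the operator estimate by a large power of $2^{J_n}$, and one must keep the resulting expression inside the sub-Gaussian regime of the functional Bernstein inequality uniformly over $\Theta_s$. Verifying that the constants in Davis--Kahan and in Proposition~\ref{prop:transition} can be chosen uniformly in the parameter class is essential and relies on the uniform lower bound $K'$ on transition densities established above. Matching the rates $n\eps_n^3$ and $n^2\eps_n^5$ exactly (rather than a slower power) hinges on the specific calibration of $J_n$ as a function of $\eps_n$ so that stochastic and bias contributions are of the same order, mirroring the minimax-optimal calibration of \cite{gobetETAL2004} but now propagated through exponential rather than moment tails.
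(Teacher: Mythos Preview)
Your overall strategy is right, but Step~1 as written creates a rate gap that Step~3 cannot close. Applying Theorem~\ref{empconinequ} to the tensor class $\{\psi_{lk}\otimes\psi_{l'k'}\}$ of dimension $D_{J_n}^2\simeq 2^{2J_n}$ gives an HS (or operator) error of order $\sqrt{2^{2J_n}/n}$, and plain Davis--Kahan then yields $\|\hat u_1-u_1\|_{L^2}\lesssim 2^{J_n}/\sqrt{n}$. With $2^{J_n}=n\eps_n^2$ this is $\sqrt{n}\eps_n^2$, not $\eps_n$, so after the Bernstein inflation $2^{J_n}$ you obtain $n^{3/2}\eps_n^4$ for the $H^1$-error, strictly slower than the target $n\eps_n^3$ (the ratio $\sqrt{n}\eps_n$ diverges since $\eps_n\gtrsim n^{-(s+1)/(2s+3)}>n^{-1/2}$). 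A short calculation shows that no choice of $J_n$ balances bias $2^{-J_n s}$, the dimension term $2^{2J_n}/\sqrt{n}$, and the required tail $e^{-Dn\eps_n^2}$ simultaneously at the claimed rate.

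The paper avoids this by \emph{not} controlling the full operator error for the eigenvector bound. Instead one applies Theorem~\ref{empconinequ} to the $2^{J_n}$-dimensional class $\{v(x)u_1^{J_n}(y)+u_1^{J_n}(x)v(y):v\in V_{J_n},\|v\|_{L^2}\le1\}$, using that the Galerkin eigenfunction $u_1^{J_n}$ has uniformly bounded sup-norm. This gives the vector-wise bound $\|(\hat{\bm P}_\Delta-\bm P_\Delta^J)\bm u_1^J\|_{\ell^2}\lesssim\sqrt{2^{J_n}/n}$, one factor $2^{J_n/2}$ better than HS. The operator-norm bound $2^{2J_n}/\sqrt{n}$ is still needed, but only qualitatively (it must tend to zero, which forces the hypothesis $2^{J_n}\lesssim n^{1/4}/\log n$). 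The transfer to $|\hat\kappa_1-\kappa_1^J|+\|\hat{\bm u}_1-\bm u_1^J\|_{\ell^2}$ then uses the refined perturbation result Proposition~4.2/Corollary~4.3 of \cite{gobetETAL2004}, which bounds the eigenpair error by $\|(\hat A-A)\bm u_1^J\|$ rather than $\|\hat A-A\|$; Davis--Kahan is too crude here. A second omission is the Gram matrix: since $P_\Delta$ is self-adjoint on $L^2(\mu)$, not $L^2$, the Galerkin eigenproblem is $\hat{\bm G}^{-1}\hat{\bm P}_\Delta$ and one needs analogous vector-wise and operator-norm concentration for $\hat{\bm G}-\bm G$ as well.
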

We postpone the proof of Theorem~\ref{thm:conest} to the end of the subsection, but record how it can be used to construct tests for the separation metric
\begin{align}\label{dn}
d_n(\theta,\theta_0)=n^{-1}\eps_n^{-2} \|\sigma^2-\sigma_0^2\|_ { L^2( [A, B ] ) 
}+n^{-2} \eps_n^{-4} \|b-b_0\|_{L^2([A,B])}.
\end{align}
Given Theorem \ref{thm:conest} the proof of the following result is elementary (see the Appendix).
\begin{theorem}\label{thm:tests}
For $\theta_0\in\Theta_s$ there exists a 
sequence of tests (indicator functions) 
$\Psi_n\equiv\Psi(X_0,\dots,X_{n\Delta})$ such that for every $n\in \N, C>0$, 
there exists 
$M=M(C)>0$ large enough such that
\[
\E_{\theta_0}[\Psi_n]\to_{n\to\infty}0,\quad\sup_{\theta\in\Theta_s:d_n(\theta,
\theta_0)\ge M\eps_n}\mathbb E_{\theta}[1-\Psi_n]\le e^{-(C+4)n\eps_n^2}.
\]
\end{theorem}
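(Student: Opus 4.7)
The plan is to build a straightforward plug-in test based on the spectral estimators $(\hat\sigma^2,\hat b)$ from \cite{gobetETAL2004}, whose concentration is controlled by Theorem~\ref{thm:conest}. Let $R=R(C+4)$ be the constant furnished by Theorem~\ref{thm:conest} when applied with $D$ replaced by $C+4$. I would define the test
\[
\Psi_n := \1\!\left\{\|\hat\sigma^2-\sigma_0^2\|_{L^2([A,B])}\ge 2R\,n\eps_n^3\;\text{ or }\;\|\hat b-b_0\|_{L^2([A,B])}\ge 2R\,n^2\eps_n^5\right\}.
\]
Because $(\sigma_0,b_0)\in\Theta_s$, Theorem~\ref{thm:conest} applied at $\theta_0$ yields $\E_{\theta_0}[\Psi_n]\le e^{-(C+4)n\eps_n^2}\to 0$, giving the Type~I bound directly.

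For the Type~II control, suppose $\theta=(\sigma,b)\in\Theta_s$ satisfies $d_n(\theta,\theta_0)\ge M\eps_n$. Unpacking the definition \eqref{dn}, at least one of
\[
\|\sigma^2-\sigma_0^2\|_{L^2([A,B])}\ge (M/2)\,n\eps_n^3,\qquad \|b-b_0\|_{L^2([A,B])}\ge (M/2)\,n^2\eps_n^5
\]
must hold. In the first case, on the event $\{\|\hat\sigma^2-\sigma^2\|_{L^2([A,B])}<Rn\eps_n^3\}$ the reverse triangle inequality gives
\[
\|\hat\sigma^2-\sigma_0^2\|_{L^2([A,B])}\ge (M/2-R)\,n\eps_n^3,
\]
which exceeds $2R\,n\eps_n^3$ as soon as $M\ge 6R$; the symmetric argument handles the second case. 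Taking $M:=6R$, Theorem~\ref{thm:conest} then ensures that $\Psi_n=1$ except on a set of $\PP_\theta$-probability at most $e^{-(C+4)n\eps_n^2}$, uniformly over $\theta\in\Theta_s$, which is exactly the required Type~II bound.

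I do not anticipate a genuine obstacle here: all the hard analytic work (spectral bias estimates plus the Bernstein inequality for the empirical process built from $X_0,\dots,X_{n\Delta}$) has already been absorbed into Theorem~\ref{thm:conest}, and the test construction is then a standard application of the ``plug-in plus reverse triangle inequality'' strategy from Section~7.1 of \cite{GineNickl2015}. The only point that needs a line of care is that the constant $R$ in Theorem~\ref{thm:conest} depends on the target exponential level, which is why one must first fix the desired level $C+4$ in the Type~I/II bounds and then choose $M=6R(C+4)$; this is why the statement only asserts existence of a sufficiently large $M=M(C)$.
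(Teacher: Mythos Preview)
Your approach is the same as the paper's: a plug-in test based on $(\hat\sigma^2,\hat b)$, with the Type~II bound obtained via the reverse triangle inequality and Theorem~\ref{thm:conest}. The paper just packages the two components into the single distance $d_n(\hat\theta,\theta_0)$ and a threshold $L'\eps_n$, but the argument is identical.

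One small quantifier slip worth noting: as you have written it, the test threshold $2R(C+4)$ depends on $C$, whereas the statement asks for a \emph{single} sequence $(\Psi_n)$ that works simultaneously for every $C>0$ (with $M=M(C)$ adjusted accordingly). The fix is immediate and is what the paper does implicitly: since the Type~I requirement is only $\E_{\theta_0}[\Psi_n]\to 0$, not exponential decay at level $C+4$, you may fix the threshold once and for all at, say, $2R_0$ with $R_0=R(1)$, and then for each $C$ take $M\ge 4R_0+2R(C+4)$ in the Type~II step.
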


In the remaining part of this subsection we derive concentration inequalities for 
the successive steps in the estimation procedure and at the end of the section 
we prove Theorem~\ref{thm:conest}.
We denote by $\psi_\lambda$ with $\lambda=(l,k),|\lambda|=l$, a compactly 
supported $L^2$-orthonormal wavelet basis of $L^2([0,1])$ as after (\ref{wavnorm}). Let~$V_J$ be the $L^2$-closed linear spaces 
spanned by the wavelets up to level $|\lambda| \le J$. We define $\pi_J$ to be the 
$L^2$-orthogonal projection onto $V_J$ and $\pi_J^\mu$ to be the 
$L^2(\mu)$-orthogonal projection onto $V_J$. We construct estimators as in 
\cite{gobetETAL2004}.
We estimate the action of the transition operator on the wavelet spaces 
$(\bm{P}^J_\Delta)_{\lambda,\lambda'}:=\langle P^{\sigma b}_\Delta\psi_\lambda,
\psi_{\lambda'}\rangle_\mu$ by
\begin{align*}
(\hat{\bm{P}}_\Delta)_{\lambda,\lambda'}
:=\frac1{2n}\sum_{l=1}^n\left(\psi_\lambda(X_{(l-1)\Delta})\psi_{\lambda'}(X_{
l\Delta})+\psi_{\lambda'}(X_{(l-1)\Delta})\psi_{\lambda}(X_{
l\Delta})\right)
\end{align*}
and the $\dim(V_J)\times\dim(V_J)$-dimensional Gram matrix $\bm{G}$ with 
entries 
$\bm{G}_{\lambda,\lambda'}=\langle\psi_\lambda,\psi_{\lambda'}\rangle_\mu$ by
\begin{align*} 
\hat{\bm{G}}_{\lambda,\lambda'}:=\frac1{n}\left(\frac{\psi_\lambda(X_0)\psi_{
\lambda'}(X_0)}{2}+\frac{\psi_{\lambda}(X_{n\Delta})\psi_{\lambda'}(X_{n\Delta}
)}{2}+\sum_{l=1}^ { n-1 } \psi_\lambda(X_{l\Delta})\psi_{\lambda'}(X_{l\Delta}) 
\right).
\end{align*}
Let $u_1$ be the eigenfunction of $P^{\sigma b}_\Delta$ 
corresponding to the second largest eigenvalue $\kappa_1$. Let $u_1^J$ be the 
eigenfunction belonging to the second largest eigenvalue $\kappa_1^J$ of 
the operator $\pi^\mu_J P^{\sigma b}_\Delta$.
\begin{lemma} \label{eigbd}
 $\|u_1^J\|_\infty$ is bounded uniformly in $(\sigma,b)\in\Theta_s$ 
and $J\in \N$.
\end{lemma}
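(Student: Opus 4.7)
The plan is to exploit the defining eigenfunction relation
\begin{equation*}
u_1^J = (\kappa_1^J)^{-1}\,\pi_J^\mu P_\Delta^{\sigma b} u_1^J
\end{equation*}
together with the smoothing action of $P_\Delta^{\sigma b}$. Normalise $\|u_1^J\|_{L^2(\mu)}=1$. Since on $\Theta$ the invariant density $\mu$ is bounded below by a constant depending only on $d,D$ (cf.\ the discussion following (\ref{invid})), this gives $\|u_1^J\|_{L^1([0,1])}\lesssim 1$ uniformly in $(\sigma,b)\in\Theta$. The problem thus reduces to a uniform $L^\infty$-bound on $\pi_J^\mu P_\Delta^{\sigma b} u_1^J$ and a uniform lower bound on $\kappa_1^J$.

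The key observation is that on $\Theta$ the transition density $p_{\sigma b}(\Delta,x,y)$ is not only uniformly bounded (Proposition~\ref{prop:transition}) but also smooth in $x$, with $\|\partial_x^k p_{\sigma b}(\Delta,\cdot,\cdot)\|_\infty$ uniformly bounded in $\Theta$ for a few values of $k$ by parabolic regularity (since $\sigma^2$ and $b$ enjoy the regularity built into $\Theta$). Hence $g:=P_\Delta^{\sigma b} u_1^J$ satisfies $\|g\|_{\mathcal C^r}\lesssim \|u_1^J\|_{L^1([0,1])}\lesssim 1$ uniformly in $\Theta$ and $J$, for some fixed $r>0$. Writing $\pi_J^\mu g = g-(I-\pi_J^\mu)g$ and invoking the standard wavelet approximation estimate $\|(I-\pi_J^\mu)g\|_\infty\lesssim 2^{-Jr}\|g\|_{\mathcal C^r}$ (valid for the compactly supported, boundary-adapted Daubechies basis fixed in (\ref{wavnorm}); the $\mu$-weighting only contributes an extra factor $\|\mu\|_\infty/\inf\mu$ because the Gram matrix $\bm G$ of $\{\psi_\lambda\}_{|\lambda|\le J}$ in $L^2(\mu)$ is uniformly well-conditioned over $\Theta$) yields $\|\pi_J^\mu g\|_\infty\lesssim 1$ uniformly in $J$ and $(\sigma,b)\in\Theta$.

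To conclude, a uniform lower bound $\kappa_1^J\ge c>0$ in both $(\sigma,b)\in\Theta_s$ and $J\in\N$ is needed. Since $P_\Delta^{\sigma b}$ is self-adjoint on $L^2(\mu)$ and $\kappa_1=e^{\Delta\lambda_1}\in(0,1)$, the second eigenvalue is bounded below by a constant uniform in $\Theta_s$ via the spectral-gap estimate of~(\ref{contractionproperty}) combined with reversibility and the uniform positivity of transition densities in Proposition~\ref{prop:transition}. The Courant--Fischer min-max principle applied to the restriction of $P_\Delta^{\sigma b}$ to $V_J$ shows that $(\kappa_1^J)_J$ is non-decreasing and converges upward to $\kappa_1$; taking as a test vector the $L^2(\mu)$-projection onto $V_J$ of the true eigenfunction $u_1^{\sigma b}$, which inherits $\mathcal C^{s+1}$ regularity from elliptic theory for $(\sigma,b)\in\Theta_s$, yields $\kappa_1^J\ge\kappa_1/2$ for all $J\ge J_0^*$ with $J_0^*$ uniform over $\Theta_s$, while the finitely many initial levels $J_0\le J<J_0^*$ are handled by a compactness argument. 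The \textbf{main obstacle} is exactly this last uniformity of the lower bound on $\kappa_1^J$, which simultaneously demands a uniform spectral gap and uniform approximation of $u_1^{\sigma b}$ by $V_J$. Once this is in place, the eigenfunction relation gives $\|u_1^J\|_\infty\le (\kappa_1^J)^{-1}C\lesssim 1$, as desired.
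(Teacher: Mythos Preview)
Your approach differs substantially from the paper's, which is a three-line argument: Lemma~6.6 in \cite{gobetETAL2004} gives $\|u_1\|_{H^{s+1}}$ uniformly bounded over $\Theta_s$; Corollary~4.6 in \cite{gobetETAL2004} then yields $\|u_1^J\|_{H^1}$ uniformly bounded in both $J$ and $(\sigma,b)$; and the one-dimensional Sobolev embedding $H^1([0,1])\hookrightarrow L^\infty$ concludes. The paper thus never needs to touch $\kappa_1^J$.

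Your route via the eigenfunction relation and the smoothing action of $P_\Delta$ is plausible in outline, but the argument for the uniform lower bound on $\kappa_1^J$ contains a genuine gap. The contraction property~(\ref{contractionproperty}) and the uniform positivity of transition densities in Proposition~\ref{prop:transition} both encode \emph{fast mixing} and therefore yield \emph{upper} bounds on $\kappa_1$ (equivalently, lower bounds on the spectral gap $1-\kappa_1$), which is the wrong direction for your purposes. A correct lower bound on $\kappa_1=e^{\Delta\lambda_1}$ requires instead an upper bound on $-\lambda_1$, which one obtains from the Rayleigh quotient $\langle -Lf,f\rangle_\mu=\int S|f'|^2$ by inserting a fixed test function and using the uniform upper bound on $S=\tfrac12\sigma^2\mu$ over $\Theta$. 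Separately, the ``compactness argument'' for the finitely many initial levels is only asserted: making it rigorous would require continuous dependence of $p_{\sigma b}(\Delta,\cdot,\cdot)$ on $(\sigma,b)$ in a topology in which $\Theta_s$ is compact, a PDE stability statement not available in the paper and not entirely routine. Once these two points are repaired, the remaining steps (parabolic smoothing, $L^\infty$-stability of $\pi_J^\mu$) are essentially sound, though the $L^\infty$ rate you quote for $(I-\pi_J^\mu)g$ via the Bernstein inequality is $2^{-J(r-1/2)}$ rather than $2^{-Jr}$; this weaker rate still suffices for uniform boundedness.
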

\begin{proof}
By Lemma~6.6 in \cite{gobetETAL2004}, $\|u_1\|_{H^{s+1}}$ is uniformly bounded in $\Theta_s$.  By Corollary 4.6 in 
\cite{gobetETAL2004} this implies that $\|u_1^J\|_{H^1}$ is uniformly bounded, 
and the Sobolev imbedding implies the result.
\end{proof}

Subsequently, $\|\cdot\|_{\ell^2 \to \ell^2}$ denotes the usual norm of an operator on $\ell^2$.
\begin{lemma}
Let $\bm{u}_1^J$ be the vector associated with the normalised eigenfunction 
$u_1^J$ of $\pi_J^\mu P^{\sigma b}_\Delta$ with eigenvalue $\kappa_1^J$.
Let $J=J_n\to\infty$ as $n\to\infty$ be a sequence of integers and let $n$ be 
such that
 $2^J\le c n^{1/2}/\log n$ for some $c>0$. For (\ref{conmuL2}) assume that also 
$2^{-sJ}\le c \sqrt{2^J/n}$. Then for all $D>0$ 
there exists $C>0, \kappa>0$ such that uniformly over~$\Theta_s$
\begin{align}
 \PP\left(\|(\hat{\bm{P}}_\Delta-\bm{P}_\Delta^J)\bm{u}_1^J\|_{\ell^2}
< C\sqrt{\frac{2^J}{n}}\right)&\ge 1-2\kappa e^{-D 2^J},\label{conPu}\\
 \PP\left(\|(\hat{\bm{G}}-\bm{G})\bm{u}_1^J\|_{\ell^2}
<C\sqrt{\frac{2^J}{n}}\right)&\ge 1-2\kappa e^{-D 2^J},\label{conGu}\\
 \PP\left(\|\hat\mu-\mu\|_{L^2}
<C\sqrt{\frac{2^J}{n}}\right)&\ge 1-2\kappa e^{-D 
2^J},\displaybreak[1]\label{conmuL2}\\
 \PP\left(\|\hat{\bm{G}}-\bm{G}\|_{\ell^2\to\ell^2}
<C\frac{2^{2J}}{\sqrt{n}}\right)&\ge 1-\kappa 2^{J+2} e^{-D 2^J},\label{conG}\\
 \PP\left(\|\hat{\bm{P}}_\Delta-\bm{P}_\Delta^J\|_{\ell^2\to\ell^2}
<C\frac{2^{2J}}{\sqrt{n}}\right)&\ge 1-\kappa 2^{J+2} e^{-D 
2^J}.\displaybreak[1]\label{conP}
\end{align}
Moreover, for all $\delta,D>0$ there exists $n_0$ such that we have for all 
$n\ge n_0$,
\begin{equation}
 \PP\left(\|\hat\mu-\mu\|_{L^\infty([0,1])}<\delta\right)\ge 1-2\kappa e^{-D 
2^J}.\label{conmuLinfty}\\
\end{equation}
\end{lemma}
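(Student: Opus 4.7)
The plan is to reduce each of the six inequalities to an application of the empirical-process concentration Theorem~\ref{empconinequ} (in either its univariate or bivariate version, cf.\ \eqref{bern}), specialised to classes indexed by the Euclidean unit ball of $\R^{\dim V_J}$ with $\dim V_J \asymp 2^J$, and with deviation parameter $x \asymp 2^J$. Throughout I would use the standard wavelet \emph{Bernstein bound} $\|\sum_{|\lambda|\le J}\alpha_\lambda\psi_\lambda\|_\infty \lesssim 2^{J/2}\|\alpha\|_{\ell^2}$ together with Lemma~\ref{eigbd} to control envelope and $L^2$ norms of the indexing classes uniformly in $\Theta_s$.

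\textbf{Inequalities \eqref{conPu} and \eqref{conGu}.} First I would express the $\ell^2$-norm by duality: for any matrix $A$, $\|A\bm u_1^J\|_{\ell^2} = \sup_{\|\alpha\|_{\ell^2}\le 1} \langle \alpha, A \bm u_1^J\rangle$. Taking $A = \hat{\bm P}_\Delta - \bm P^J_\Delta$ and using $\sum_\lambda (u_1^J)_\lambda \psi_\lambda = u_1^J$ (valid since $u_1^J \in V_J$), this supremum becomes $n^{-1}\sup_\alpha Z(f_\alpha)$ with $Z$ the bivariate centred sum of Theorem~\ref{coninequmulti} and $f_\alpha(x,y) = \tfrac12 (u_1^J(x)g_\alpha(y) + g_\alpha(x)u_1^J(y))$, where $g_\alpha = \sum_{\lambda'}\alpha_{\lambda'}\psi_{\lambda'}$. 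By Lemma~\ref{eigbd} and the Bernstein bound, $\|f_\alpha\|_\infty \lesssim 2^{J/2}$, while Parseval combined with the invariance of the marginals of $\mu_2$ yields $\|f_\alpha\|_{L^2(\mu_2)} \lesssim 1$. Applying Theorem~\ref{empconinequ} with $d \asymp 2^J$ and $x = D 2^J$ then gives $\sup_\alpha|Z(f_\alpha)| \lesssim \sqrt{n 2^J} + \log(n) 2^{3J/2}$, and the hypothesis $2^J \le c n^{1/2}/\log n$ ensures that the sub-exponential remainder is dominated by the Gaussian piece. Dividing by $n$ yields \eqref{conPu}. For \eqref{conGu} the argument is identical but uses the univariate version with $f_\alpha(x) = u_1^J(x)g_\alpha(x)$.

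\textbf{Inequality \eqref{conmuL2}.} I would split $\|\hat\mu-\mu\|_{L^2} \le \|\hat\mu - \pi_J \mu\|_{L^2} + \|\pi_J \mu - \mu\|_{L^2}$. The first (stochastic) piece is treated by the same duality and empirical-process argument as for \eqref{conGu}, this time with $u_1^J\equiv 1$; the second is a deterministic approximation bias of order $2^{-sJ}\|\mu\|_{H^s}$ (and $\mu$ inherits $H^s$-regularity uniformly on $\Theta_s$ through $(\log\mu)' = (2b - (\sigma^2)')/\sigma^2$), which the extra hypothesis $2^{-sJ}\le c\sqrt{2^J/n}$ forces to be dominated by the stochastic rate.

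\textbf{Inequalities \eqref{conG} and \eqref{conP}.} For a single entry I would apply \eqref{bern} to $f = \psi_\lambda \psi_{\lambda'}$ (symmetrised in the case of $\hat{\bm P}_\Delta$), noting $\|f\|_\infty \lesssim 2^J$ and $\|f\|_{L^2(\mu)}^2 \lesssim 2^J$. With $x = D 2^J$ this gives $|\hat A_{\lambda,\lambda'} - A_{\lambda,\lambda'}| \lesssim 2^J/\sqrt n$ with probability $\ge 1 - \kappa e^{-D 2^J}$. A union bound over the $O(2^{2J})$ entries combined with the crude operator-norm bound $\|A\|_{\ell^2 \to \ell^2} \le \|A\|_F \lesssim 2^J \max_{\lambda,\lambda'}|A_{\lambda,\lambda'}|$ yields the stated rate $2^{2J}/\sqrt n$. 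The $2^{2J}$ factor from the union bound I would absorb via $2^{2J}e^{-D 2^J}\lesssim 2^J e^{-(D/2)2^J}$, so that after halving $D$ the tail takes the stated form $\kappa 2^{J+2} e^{-D 2^J}$.

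\textbf{Inequality \eqref{conmuLinfty}.} Finally, since $\hat\mu - \pi_J\mu \in V_J$, the Bernstein bound $\|g\|_\infty \lesssim 2^{J/2}\|g\|_{L^2}$ for $g \in V_J$, combined with the stochastic part of \eqref{conmuL2}, gives $\|\hat\mu - \pi_J \mu\|_\infty \lesssim \sqrt{2^{2J}/n} \to 0$ under the hypothesis $2^J \le c\sqrt n/\log n$. The deterministic bias $\|\pi_J \mu - \mu\|_\infty \to 0$ uniformly on $\Theta_s$ since $\mu$ has a uniformly bounded modulus of continuity there. Hence for $n$ large enough the sum is below $\delta$ with the claimed probability.

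\textbf{Main obstacle.} The only delicate point is the balancing of the Gaussian term $\sqrt{n 2^J}$ and the sub-exponential remainder $\log(n)2^{3J/2}$ delivered by Theorem~\ref{empconinequ}: ensuring the former dominates is precisely the role of the hypothesis $2^J \le cn^{1/2}/\log n$. Everything else amounts to routine bookkeeping on top of the concentration machinery of Section~\ref{diffdet} and standard wavelet estimates.
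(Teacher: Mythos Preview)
Your argument is correct and tracks the paper's proof closely for \eqref{conPu}, \eqref{conGu} and \eqref{conmuL2}. For \eqref{conG} and \eqref{conP} you take a slightly different tack: you bound individual entries via the single-function Bernstein inequality \eqref{bern} and then pass to the Frobenius norm with a union bound over $O(2^{2J})$ pairs, whereas the paper bounds each column $\|(\hat{\bm G}-\bm G)\bm e_\lambda\|_{\ell^2}$ as a full empirical-process supremum (Theorem~\ref{empconinequ}) and then sums over the $2^{J+1}$ columns. Both routes land at the rate $2^{2J}/\sqrt n$; yours is marginally more elementary but costs an extra factor $2^J$ in the union bound, which you correctly absorb by relabelling $D$, while the paper's columnwise argument gives the stated $\kappa 2^{J+2}e^{-D2^J}$ directly. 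For \eqref{conmuLinfty} you reduce to \eqref{conmuL2} via the wavelet Bernstein inequality on $V_J$, which is a clean alternative to the paper's direct $L^\infty$ empirical-process bound (the paper sketches this only, referring to \cite{GineNickl2011}); your route is simpler and fully self-contained given the $L^2$ bound already established.
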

\begin{proof}
We have
\begin{align*}
 \left[(\hat{\bm{P}}_\Delta-\bm{P}_\Delta^J)\bm{u}_1^J\right]_\lambda
&=\frac1{2n}\sum_{l=1}^n\left(\psi_\lambda(X_{(l-1)\Delta})u_1^J(X_{
l\Delta})+u_1^J(X_{(l-1)\Delta})\psi_{\lambda}(X_{
l\Delta})\right.\\
&\qquad \qquad\qquad\left.-\E\left[\psi_\lambda(X_0)
u_1^J(X_\Delta)+u_1^J (X_0)\psi_\lambda(X_\Delta)\right]\right).
\end{align*}
We express the $\ell^2$-norm $\|(\hat{\bm{P}}_\Delta-\bm{P}_\Delta^J)\bm{u}_1^J\|_{\ell^2}$ by its dual representation
\begin{align*}
&\sup_{\|v\|_{\ell^2}\le1}\bigg|\sum_{|\lambda|\le 
J}v_\lambda\frac1{2n}\sum_{l=1 } ^n\left(\psi_\lambda(X_{(l-1)\Delta})u_1^J(X_{
l\Delta})+u_1^J(X_{(l-1)\Delta})\psi_{\lambda}(X_{
l\Delta})\right.\\
&\qquad \qquad\qquad\left.-\E\left[\psi_\lambda(X_0)
u_1^J(X_\Delta)+u_1^J (X_0)\psi_\lambda(X_\Delta)\right]\right)\bigg| \\
&=\sup_{\|v\|_{L^2}\le1, v\in V_J}\bigg|\frac1{2n}\sum_{l=1 } 
^n\left(v(X_{(l-1)\Delta})u_1^J(X_{l\Delta})+u_1^J(X_{(l-1)\Delta})v(X_{
l\Delta})\right. \\
&\qquad \qquad\qquad\left.-\E\left[v(X_0)
u_1^J(X_\Delta)+u_1^J (X_0)v(X_\Delta)\right]\right)\bigg|.
\end{align*}
We consider the class
$\F=\{f(x,y)=(v(x)u_1^J(y)+u_1^J(x)v(y))/2,\|v\|_{L^2}\le1,v\in V_J\}$ with 
$\dim(V_J)=2^{J+1}$. In order to determine $V^2$ and $U$ in Theorem~\ref{empconinequ} we use Lemma 
\ref{eigbd} and calculate 
for $f\in\F$
\begin{align*}
 \|f\|_{L^2(\mu_2)}^2
&=\frac1{4}\|v(x)u_1^J(y)+u_1^J(x)v(y)\|_{L^2(\mu_2)}^2\\
&\le 
\|u_1^J\|_{\infty}^2\|v\|_{L^2(\mu)}^2
\le 
\|u_1^J\|_{\infty}^2\|\mu\|_\infty\|v\|_{L^2}^2
\le C,
\\
\|f\|_\infty
&=\|u^J_1\|_\infty \Big\|\sum_{|\lambda|\le J}\langle v, 
\psi_\lambda\rangle\psi_\lambda \Big\|_\infty
\le C \Big\|\sum_{|\lambda|\le J}|\psi_\lambda| \Big\|_\infty \le C 2^{J/2}.
\end{align*}
We obtain the bounds $V^2 \le \tilde C n$ and
$U\le \tilde C \log(n) 2^{J/2}$ for some constant $\tilde C$. Applying 
Theorem~\ref{empconinequ} yields
\begin{align*}
 \PP\left(\sup_{f\in 
\F}\frac{|Z(f)|}{n}\ge\tilde\kappa\left(\sqrt{\tilde 
C(D+2)\frac{2^{J}}{n}}+\tilde C (D+2)
\frac{\log(n)2^{3J/2}}{n}\right)\right)\le2\kappa e^{-D 2^J}.
\end{align*}
By choice of $J=J_n$ the first term with $\sqrt{2^J/n}$ dominates the second 
term for large $n$ 
and this implies (\ref{conPu}). The bound (\ref{conGu}) for $\hat{\bm{G}}$ 
follows in the same way by considering the class of functions 
$\F=\{f(x,y)=(v(x)u_1^J(x)+v(y)u_1^J(y))/2,\|v\|_{L^2}\le1,v\in V_J\}$.

We denote the empirical measure by 
$\mu_n=\frac1{n+1}\sum_{l=0}^{n}\delta_{X_{l\Delta}}$, and define 
\begin{equation} \label{muest}
\hat \mu=\sum_{|\lambda|\le 
J}\frac{1}{n+1}\sum_{l=0}^{n}\psi_\lambda(X_{l\Delta})\psi_\lambda,
\end{equation}
$K_J(x,y)=\sum_{|\lambda|\le J}\psi_\lambda(x)\psi_\lambda(y)$ and 
$K_J(\mu)=\int K_J(\cdot,y)\mu(y)\d y$.
We consider the variance term $\hat\mu-\pi_J\mu$ and represent, for $B_0$ a 
countable subset of the unit ball $B$ of $L^2([0,1])$,
\[
 \|H\|_{L^2}=\sup_{f\in B_0}\left|\int_{[0,1]}H(t)f(t)\d t\right|,~~~H \in 
L^2([0,1]).
\]
Then $\|\hat\mu-\pi_J \mu\|_{L^2}=\|\mu_n-\mu\|_{\mathcal K}$ with 
$\|H\|_{\mathcal 
K}:=\sup_{k\in\mathcal K}|H(k)|$ and
\[
 \mathcal K:=\left\{
x\mapsto\int_{[0,1]}f(t) K_J(t,x)\d t-\int_{[0,1]}f(t)K_J(\mu)(t)\d t: f\in B_0
\right\}.
\]
We apply the concentration inequality Theorem~\ref{empconinequ} to the class 
$\mathcal K \subset V_J$. As in (20) and (22) in \cite{GineNickl2011} we 
bound 
$\sup_{k\in\mathcal 
K}\|k\|_{\infty}\le C 2^{J/2}$ and $\sup_{k\in\mathcal K}\|k\|^2_{L^2(\mu)}\le 
C$.
We obtain
\begin{align*}
\PP\left(\|\hat\mu-\pi_J\mu\|_{L^2}\ge\tilde\kappa\left(\sqrt
{C (D+2)\frac{2^J}{n}}
+C (D+2)\frac{\log(n) 2^{3J/2}}{n}\right)\right) \le2\kappa e^{-D2^J}.\end{align*}
By choice of $J=J_n$ the term $\sqrt{2^J/n}$ dominates the 
second term and we have for $C>0$ large enough
\[\PP\left(\|\hat\mu-\pi_J\mu\|_{L^2}\ge C\sqrt
{\frac{2^J}{n}}
\right)\le2\kappa e^{-D2^J}.\]
Since $\|\mu\|_{H^s}$ is uniformly bounded over $\Theta_s$ (cf.~(\ref{invid})), 
we have 
$\|\mu-\pi_J \mu\|_{L^2}\le C 2^{-Js}$. By the triangle inequality and the assumption $2^{-Js}\le c\sqrt{2^J/n}$ 
we obtain (\ref{conmuL2}) by possibly increasing the constant $C$. Claim 
(\ref{conmuLinfty}) follows by a similar empirical process type bound for $\mu_n 
- \mu$, corresponding to the case $r=\infty$ in Section 3.1.2 in 
\cite{GineNickl2011}, with $\delta_n = \sqrt n \eps_n^2 \to 0$ there  eventually 
less than any $\delta>0$ (and using Theorem \ref{empconinequ} in place of 
Talagand's inequality). Details are left to the reader.

Next we use the bound 
\begin{equation}\label{HSinequ}
 \|\hat{\bm{G}}-\bm{G}\|_{\ell^2\to\ell^2}\le \sum_{|\lambda|\le J} 
\|(\hat{\bm{G}}-\bm{G})\bm{e}_\lambda\|_{\ell^2},
\end{equation}
where $e_\lambda$ are orthonormal vectors of $(V, \|\cdot\|_{\ell^2})$.
We represent
\begin{align*}
\quad \|(\hat{\bm{G}}-\bm{G})\bm{e}_\lambda\|_{\ell^2} &=\sup_{\|v\|_{L^2}\le1,v\in 
V_J}\bigg|\frac1{n}\sum_{l=1}^n\frac{1}{2}(v(X_{(l-1)\Delta})\psi_\lambda(X_{(l-1)\Delta}) \\
&~~~~~~+v(X_{l\Delta})\psi_\lambda(X_{l\Delta}))-\E[v(X_0)\psi_\lambda(X_0)]\bigg|.
\end{align*}
Similar as before we consider functions of the form
$f(x,y)=(v(x)\psi_\lambda(x)+v(y)\psi_\lambda(y))/2$. Using 
$\|\psi_\lambda\|_\infty\le C 2^{J/2}$ we calculate
$\|f\|^2_{L^2(\mu_2)}\le C 2^J$ and $\|f\|_{\infty}\le C 2^J$. For fixed 
$\lambda$ Theorem \ref{empconinequ} yields the concentration inequality
\begin{align*}
\PP\left( 
\|(\hat{\bm{G}}-\bm{G})\bm{e}_\lambda\|_{L^2}\ge \tilde\kappa\left(\sqrt
{C (D+2)\frac{2^{2J}}{n}}
+C (D+2)\frac{\log(n) 2^{2J}}{n}\right)\right)\le2\kappa e^{-D2^J}.
\end{align*}
The first term in the sum dominates for large $n$. 
Upon choosing a larger constant $C>0$ we obtain
\[\PP\left( 
\|(\hat{\bm{G}}-\bm{G})\bm{e}_\lambda\|_{L^2}\ge C
\frac{2^{J}}{\sqrt{n}}
\right)\le2\kappa e^{-D2^J}.\]
By observing that the sum in~\eqref{HSinequ} is over $2^{J+1}$ summands we 
obtain (\ref{conG}), by enlarging the constant $C$ if necessary.

The final bound (\ref{conP}) for 
$\|\hat{\bm{P}}_\Delta-\bm{P}_\Delta^J\|_{\ell^2\to\ell^2}$ 
follows similarly by considering the functions 
$f(x,y)=(v(x)\psi_\lambda(y)+v(y)\psi_\lambda(x))/2$.
\end{proof}

In the following we assume $2^J \le c n^{1/4}/\log n$ and will say that an event $A$ occurs \emph{with sufficiently high probability} if for 
all $D>0$ there exists $C>0$ such that $\mathbb P(A)$ is at least as large as the probability of the intersection of 
the events in the previous theorem. Then for $n$ large enough the events in 
\eqref{conG}-\eqref{conmuLinfty} include the events that 
$\|\hat{\bm{G}}-\bm{G}\|_{\ell^2 \to \ell^2}\le\tfrac1{2}\|\bm{G}^{-1}\|_{\ell^2 \to \ell^2}^{-1}$, 
$\|\hat\mu-\mu\|_{L^\infty([0,1])}
\le\tfrac{1}{2}\inf_{x \in [0,1]} \mu(x)$. This implies that 
$\hat{\bm{G}}$ is invertible with $\|\hat{\bm{G}}^{-1}\|_{\ell^2 \to \ell^2}\le 
2\|{\bm{G}}^{-1}\|_{\ell^2 \to \ell^2}$ and that $\hat \mu$ is bounded away from zero on $[0,1]$.

\begin{lemma}\label{lem:GPu}
Assume $2^J\le c n^{1/4}/\log n$. For $n$ large enough we have with sufficiently high probability and 
uniformly 
over~$\Theta_s$   \[\|(\hat{{\bm{G}}}^{-1}\hat{\bm{P}}_{\Delta}- 
{\bm{G}}^{-1}{\bm{P}}_{\Delta}^J)\bm{u}_1^J\|_{\ell^2}< C 
\sqrt{\frac{2^J}{n}}.\]
\end{lemma}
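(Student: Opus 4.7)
The plan is to decompose the difference in the standard way:
\[
\hat{\bm{G}}^{-1}\hat{\bm{P}}_\Delta - \bm{G}^{-1}\bm{P}_\Delta^J = \hat{\bm{G}}^{-1}(\hat{\bm{P}}_\Delta - \bm{P}_\Delta^J) + (\hat{\bm{G}}^{-1} - \bm{G}^{-1})\bm{P}_\Delta^J,
\]
and to rewrite the second summand using the resolvent identity as $\hat{\bm{G}}^{-1}(\bm{G} - \hat{\bm{G}})\bm{G}^{-1}\bm{P}_\Delta^J$. Applying this to $\bm{u}_1^J$ and using the triangle inequality, the proof reduces to bounding $\|\hat{\bm{G}}^{-1}(\hat{\bm{P}}_\Delta-\bm{P}_\Delta^J)\bm{u}_1^J\|_{\ell^2}$ and $\|\hat{\bm{G}}^{-1}(\bm{G}-\hat{\bm{G}})\bm{G}^{-1}\bm{P}_\Delta^J\bm{u}_1^J\|_{\ell^2}$.

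The crucial step is to recognise that the eigenvalue equation $\pi_J^\mu P_\Delta^{\sigma b} u_1^J = \kappa_1^J u_1^J$ translates, after testing against the $\psi_{\lambda'}$'s, into the matrix identity $\bm{G}^{-1}\bm{P}_\Delta^J\bm{u}_1^J = \kappa_1^J\bm{u}_1^J$, where $|\kappa_1^J|\le 1$ because $P_\Delta^{\sigma b}$ (hence its compression to $V_J$ with respect to $\langle\cdot,\cdot\rangle_\mu$) is a contraction. This converts the second term into $\kappa_1^J\hat{\bm{G}}^{-1}(\bm{G}-\hat{\bm{G}})\bm{u}_1^J$, so that the vector-valued concentration bound \eqref{conGu} applies, and one avoids having to pay the much weaker operator-norm rate $2^{2J}/\sqrt n$ from \eqref{conG} (which would destroy the desired bound under $2^J\le cn^{1/4}/\log n$).

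To finish, I would use that on the high-probability event under consideration we have $\|\hat{\bm{G}}^{-1}\|_{\ell^2\to\ell^2}\le 2\|\bm{G}^{-1}\|_{\ell^2\to\ell^2}$, and that $\|\bm{G}^{-1}\|_{\ell^2\to\ell^2}$ is bounded uniformly over $\Theta_s$ because the invariant density $\mu$ is uniformly bounded away from zero (so the Gram matrix $\bm G$ is uniformly bounded below). Combining this with \eqref{conPu} for the first summand and \eqref{conGu} (together with $|\kappa_1^J|\le 1$) for the second summand, each contribution is of order $\sqrt{2^J/n}$, giving the claimed bound after enlarging $C$.

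The main (and only) genuine obstacle is the point above: using the crude estimate $\|(\hat{\bm{G}}^{-1}-\bm{G}^{-1})\bm{P}_\Delta^J\bm{u}_1^J\|_{\ell^2}\le \|\hat{\bm{G}}^{-1}-\bm{G}^{-1}\|_{\ell^2\to\ell^2}\|\bm{P}_\Delta^J\bm{u}_1^J\|_{\ell^2}$ is too wasteful; the eigenvector identity $\bm{G}^{-1}\bm{P}_\Delta^J\bm{u}_1^J = \kappa_1^J\bm{u}_1^J$ is what lets one trade the operator norm of $\hat{\bm{G}}-\bm{G}$ for its action on the single vector $\bm{u}_1^J$, which is precisely the place where the sharper rate \eqref{conGu} (rather than \eqref{conG}) can be invoked.
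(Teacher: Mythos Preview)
Your proof is correct and follows essentially the same route as the paper: the same decomposition via the resolvent identity, the same use of the eigenvector relation $\bm{G}^{-1}\bm{P}_\Delta^J\bm{u}_1^J=\kappa_1^J\bm{u}_1^J$ to reduce to the vector bounds \eqref{conPu} and \eqref{conGu}, and the same appeal to $\|\hat{\bm{G}}^{-1}\|\le 2\|\bm{G}^{-1}\|$ on the high-probability event. Your explicit remark that this eigenvector trick is what avoids the coarser operator-norm rate from \eqref{conG} is a helpful clarification that the paper leaves implicit.
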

\begin{proof}
 We decompose
\begin{align}
  \hat{{\bm{G}}}^{-1}\hat{\bm{P}}_{\Delta}- {\bm{G}}^{-1}{\bm{P}}_{\Delta}^J
&=\hat {\bm{G}}^{-1}(\hat{\bm{P}}_\Delta-{\bm{P}}_\Delta^J)+(\hat 
{\bm{G}}^{-1}- 
{\bm{G}}^{-1}){\bm{P}}_\Delta^J\notag\\
&=\hat {\bm{G}} ^{-1} 
((\hat{\bm{{\bm{P}}}}_\Delta-{\bm{P}}_\Delta^J)+({\bm{G}}- 
\hat {\bm{G}}){\bm{G}}^{-1}{\bm{P}}_\Delta^J).\label{GPdecomp}
\end{align}
Using that $\bm{G}^{-1}\bm{P}^J_{\Delta} \bm{u}_1^J=\kappa_1^J\bm{u}_1^J$ and 
$\|\hat{\bm{G}}^{-1}\|\le 2\|\bm{G}^{-1}\|$ we obtain
\begin{align*}
&   \|(\hat{{\bm{G}}}^{-1}\hat{\bm{P}}_{\Delta}- 
{\bm{G}}^{-1}{\bm{P}}_{\Delta}^J)\bm{u}_1^J\|_{\ell^2}\\
&\le 2\| {\bm{G}} ^{-1}\| 
\left(\|(\hat{\bm{{\bm{P}}}}_\Delta-{\bm{P}}_\Delta^J)\bm{u}_1^J\|_{\ell^2}+\|({
\bm { G } } - 
\hat {\bm{G}})\kappa_1^J\bm{u}_1^J\|_{\ell^2}\right).
\end{align*}
The results follows from this and \eqref{conPu}, \eqref{conGu}.
\end{proof}

\begin{lemma}\label{lem:GP}
Assume $2^J\le c n^{1/4}/\log n$. For $n$ large enough we have with sufficiently high probability and uniformly 
over~$\Theta_s$   
\[\|\hat{{\bm{G}}}^{-1}\hat{\bm{P}}_{\Delta}- 
{\bm{G}}^{-1}{\bm{P}}_{\Delta}^J\|_{\ell^2\to\ell^2}< C 
\frac{2^{2J}}{\sqrt{n}}.\]
\end{lemma}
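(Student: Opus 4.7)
The plan is to follow exactly the decomposition used in the proof of Lemma~\ref{lem:GPu}, but now pass to the full operator norm rather than evaluate at the single vector $\bm{u}_1^J$. Specifically, starting from
\[
\hat{{\bm{G}}}^{-1}\hat{\bm{P}}_{\Delta}- {\bm{G}}^{-1}{\bm{P}}_{\Delta}^J
=\hat {\bm{G}} ^{-1}\bigl((\hat{\bm{P}}_\Delta-{\bm{P}}_\Delta^J)+({\bm{G}}-\hat {\bm{G}}){\bm{G}}^{-1}{\bm{P}}_\Delta^J\bigr),
\]
submultiplicativity of $\|\cdot\|_{\ell^2\to\ell^2}$ and the bound $\|\hat{\bm{G}}^{-1}\|_{\ell^2\to\ell^2}\le 2\|\bm{G}^{-1}\|_{\ell^2\to\ell^2}$ (valid on the high-probability event set up before the statement of the lemma) reduce the task to controlling three quantities: $\|\hat{\bm{P}}_\Delta-\bm{P}_\Delta^J\|_{\ell^2\to\ell^2}$, $\|\hat{\bm{G}}-\bm{G}\|_{\ell^2\to\ell^2}$, and $\|\bm{G}^{-1}\bm{P}_\Delta^J\|_{\ell^2\to\ell^2}$.

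The first two are handled directly by \eqref{conP} and \eqref{conG}: each is bounded by $C 2^{2J}/\sqrt{n}$ with the required probability. For the third quantity one notes that $\bm{G}^{-1}\bm{P}_\Delta^J$ is the matrix representation, in the $L^2$-orthonormal wavelet basis, of the operator $\pi_J^\mu P_\Delta^{\sigma b}:V_J\to V_J$. Since the $\psi_\lambda$ are $L^2$-orthonormal, the $\ell^2\to\ell^2$ norm of this matrix coincides with the operator norm of $\pi_J^\mu P_\Delta^{\sigma b}$ acting between $(V_J,\|\cdot\|_{L^2})$ and $(V_J,\|\cdot\|_{L^2})$. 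Because $\mu$ is bounded above and below uniformly over $\Theta_s$ (Proposition~\ref{prop:transition} applied via $\mu=\int p_{\sigma b}(\Delta,\cdot,y)\d y$ and standard bounds), the norms $\|\cdot\|_{L^2}$ and $\|\cdot\|_{L^2(\mu)}$ are equivalent with constants independent of $(\sigma,b)\in\Theta_s$, and $\pi_J^\mu$ is the $L^2(\mu)$-orthogonal projection while $P_\Delta^{\sigma b}$ is a contraction on $L^2(\mu)$; hence $\|\bm{G}^{-1}\bm{P}_\Delta^J\|_{\ell^2\to\ell^2}\lesssim 1$ uniformly in $J$ and over $\Theta_s$.

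Combining these three estimates yields
\[
\|\hat{{\bm{G}}}^{-1}\hat{\bm{P}}_{\Delta}- {\bm{G}}^{-1}{\bm{P}}_{\Delta}^J\|_{\ell^2\to\ell^2}
\le 2\|\bm{G}^{-1}\|_{\ell^2\to\ell^2}\Bigl(C\tfrac{2^{2J}}{\sqrt n}+C\tfrac{2^{2J}}{\sqrt n}\cdot\|\bm{G}^{-1}\bm{P}_\Delta^J\|_{\ell^2\to\ell^2}\Bigr)\lesssim \frac{2^{2J}}{\sqrt n},
\]
on the intersection of the events of \eqref{conG}, \eqref{conP} and the auxiliary events ensuring invertibility of $\hat{\bm{G}}$. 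The only step that requires care is the uniform bound on $\|\bm{G}^{-1}\|_{\ell^2\to\ell^2}$ and on $\|\bm{G}^{-1}\bm{P}_\Delta^J\|_{\ell^2\to\ell^2}$; this is the main (but routine) obstacle, relying on the uniform two-sided bounds for $\mu$ over $\Theta_s$ together with the contraction property of the Markov semigroup on $L^2(\mu)$. The constraint $2^J\le c n^{1/4}/\log n$ is only used to ensure that the square-root terms in \eqref{conG} and \eqref{conP} dominate the logarithmic Bernstein corrections, exactly as in the preceding lemmas.
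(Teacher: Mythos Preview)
Your proof is correct and follows essentially the same route as the paper: the same decomposition \eqref{GPdecomp}, submultiplicativity, the high-probability bound $\|\hat{\bm G}^{-1}\|\le 2\|\bm G^{-1}\|$, and then \eqref{conG}, \eqref{conP}. One small slip: the formula $\mu=\int p_{\sigma b}(\Delta,\cdot,y)\d y$ is not right (that integral is identically~$1$); the uniform two-sided bounds on $\mu$ follow instead from the explicit expression \eqref{invid}, or alternatively from $\mu(y)=\int p_{\sigma b}(\Delta,x,y)\mu(x)\d x$ combined with Proposition~\ref{prop:transition}.
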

\begin{proof}
From \eqref{GPdecomp} we deduce
\begin{align*}
  \|\hat{{\bm{G}}}^{-1}\hat{\bm{P}}_{\Delta}- {\bm{G}}^{-1}{\bm{P}}_{\Delta}^J\|
&\le2 \|{\bm{G}} ^{-1}\| 
(\|\hat{\bm{{\bm{P}}}}_\Delta-{\bm{P}}_\Delta^J\|+\|{\bm{G}}- 
\hat {\bm{G}}\|  \|{\bm{G}}^{-1}\|  \|{\bm{P}}_\Delta^J\|)\\
&\le C (\|\hat{\bm{{\bm{P}}}}_\Delta-{\bm{P}}_\Delta^J\|+\|{\bm{G}}- \hat {\bm{G}}\|).
\end{align*}
The result follows by applying the concentration from~\eqref{conG} 
and~\eqref{conP}.
\end{proof}

\begin{lemma}\label{lem:con}
Assume $2^J\le c n^{1/4}/\log n$. Let $\hat \kappa_1$ be the second largest eigenvalue of the matrix $\hat {\bm G} ^{-1} \hat {\bm P}_\Delta$ with corresponding eigenvector $\hat{\bm{u}}_1$ and eigenfunction $\hat u_1 = \sum_\lambda (\hat{\bm{u}}_{1})_{\lambda} \psi_\lambda \in V_J$. For $n$ large enough we have with sufficiently high probability and uniformly 
over~$\Theta_s$   
\begin{align*}
& |\hat\kappa_1-\kappa_1^J|+\|\hat{\bm{u}}_1-{\bm{u}}_1^J\|_{\ell^2}< C \sqrt{\frac{2^{J}}{n}},\\
& \|\hat{{u}}_1-{{u}}_1^J\|_{H^1}< C \sqrt{\frac{2^{3J}}{n}}, ~~~~\|\hat{{u}}_1-{{u}}_1^J\|_{H^2}< C \sqrt{\frac{2^{5J}}{n}}.
\end{align*}
\end{lemma}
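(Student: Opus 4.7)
The strategy is to combine the sharp bound on $(\hat T - T)\bm u_1^J$ from Lemma \ref{lem:GPu} (where $T := \bm G^{-1}\bm P_\Delta^J$ and $\hat T := \hat{\bm G}^{-1}\hat{\bm P}_\Delta$) with standard spectral perturbation theory for an isolated simple eigenvalue, and then read off the $H^s$-bounds via a Bernstein-type wavelet inequality. First, the spectral analysis of $P_\Delta^{\sigma b}$ developed in \cite{gobetETAL2004} (and already invoked in Lemma \ref{eigbd}) implies that $\kappa_1$ is simple and separated from the rest of the spectrum by a gap $g>0$ uniform over $\Theta_s$. Since $\kappa_1^J \to \kappa_1$ as $J \to \infty$, for $J$ large $\kappa_1^J$ is also isolated with gap at least $g/2$. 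The operator norm bound of Lemma \ref{lem:GP} together with the assumption $2^J \le c n^{1/4}/\log n$ ensure that $\|\hat T - T\|_{\ell^2 \to \ell^2} = o(1)$ and is eventually less than $g/8$ on the sufficiently-high-probability event, so $\hat T$ has an isolated simple eigenvalue $\hat\kappa_1$ in a small neighbourhood of $\kappa_1^J$, with rank-one spectral projector $\hat P$.

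To quantify the perturbation sharply, I would apply the resolvent identity for spectral projectors: picking a small circle $\gamma$ around $\kappa_1^J$ lying in the resolvent set of both $T$ and $\hat T$,
\begin{equation*}
\hat P - P = \frac{1}{2\pi i}\oint_\gamma (\hat T - z)^{-1}(\hat T - T)(T - z)^{-1}\, dz.
\end{equation*}
Since $(T - z)^{-1}\bm u_1^J = (\kappa_1^J - z)^{-1}\bm u_1^J$, evaluating at $\bm u_1^J$ yields
\begin{equation*}
\|(\hat P - P)\bm u_1^J\|_{\ell^2} \lesssim g^{-1}\sup_{z \in \gamma}\|(\hat T - z)^{-1}\|_{\ell^2 \to \ell^2}\, \|(\hat T - T)\bm u_1^J\|_{\ell^2} \lesssim \sqrt{2^J/n}
\end{equation*}
by Lemma \ref{lem:GPu} and a uniform resolvent bound on $\gamma$. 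Choosing the sign of $\hat{\bm u}_1$ so that $\langle \hat{\bm u}_1, \bm u_1^J\rangle_{\ell^2} \ge 0$, a routine normalisation argument (using $P\bm u_1^J = \bm u_1^J$ and that $\hat P \bm u_1^J$ is a scalar multiple of $\hat{\bm u}_1$) converts this into $\|\hat{\bm u}_1 - \bm u_1^J\|_{\ell^2} \lesssim \sqrt{2^J/n}$.

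The eigenvalue bound follows from a short computation: from $\hat T \bm u_1^J - \kappa_1^J \bm u_1^J = (\hat T - T)\bm u_1^J$ together with $\hat T \hat{\bm u}_1 = \hat\kappa_1 \hat{\bm u}_1$, pairing in the weighted inner product $\langle \cdot, \cdot\rangle_{\hat{\bm G}}$ (with respect to which $\hat T$ is self-adjoint) with $\hat{\bm u}_1$ gives
\begin{equation*}
(\hat\kappa_1 - \kappa_1^J)\,\langle \hat{\bm u}_1, \bm u_1^J\rangle_{\hat{\bm G}} = \langle \hat{\bm u}_1, (\hat T - T)\bm u_1^J\rangle_{\hat{\bm G}},
\end{equation*}
so $|\hat\kappa_1 - \kappa_1^J| \lesssim \sqrt{2^J/n}$ once one checks that $\langle \hat{\bm u}_1, \bm u_1^J\rangle_{\hat{\bm G}}$ is bounded below (which follows from the eigenvector bound and the uniform well-conditioning of $\hat{\bm G}$).

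The $H^1, H^2$ bounds are then immediate from a Bernstein inequality for wavelet spaces: since $\hat u_1 - u_1^J \in V_J$ and $\{\psi_\lambda\}$ is $L^2$-orthonormal, $\|\hat u_1 - u_1^J\|_{L^2} = \|\hat{\bm u}_1 - \bm u_1^J\|_{\ell^2}$, and $\|f\|_{H^s} \lesssim 2^{sJ}\|f\|_{L^2}$ for $f \in V_J$ yields the stated rates $\sqrt{2^{3J}/n}$ and $\sqrt{2^{5J}/n}$. The main technical nuisance is that $T$ and $\hat T$ are self-adjoint with respect to \emph{different} weighted inner products $\langle\cdot,\cdot\rangle_{\bm G}$ and $\langle\cdot,\cdot\rangle_{\hat{\bm G}}$; but since both Gram matrices are uniformly well-conditioned over $\Theta_s$ (their eigenvalues are bounded above and away from zero by constants depending only on $D, d$, using that $\mu$ is bounded above and below), all the perturbation estimates carry over to the unweighted $\ell^2$-norm with universal constants.
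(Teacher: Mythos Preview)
Your proof is correct and follows essentially the same approach as the paper: both use Lemma~\ref{lem:GP} for the operator-norm smallness, Lemma~\ref{lem:GPu} for the sharp $\sqrt{2^J/n}$ bound on $(\hat T-T)\bm u_1^J$, and the Bernstein inequality on $V_J$ for the $H^1,H^2$ estimates. The only difference is that the paper outsources the spectral-perturbation step to Proposition~4.2 and Corollary~4.3 of \cite{gobetETAL2004}, whereas you spell it out directly via the resolvent/contour-integral representation of the spectral projector --- a standard and perfectly valid route to the same conclusion.
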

\begin{proof}
By Lemma~\ref{lem:GP} we have that $\|\hat{{\bm{G}}}^{-1}\hat{\bm{P}}_{\Delta}- 
{\bm{G}}^{-1}{\bm{P}}_{\Delta}^J\|_{\ell^2\to\ell^2}$ converges to zero. Thus 
the concentration in Lemma~\ref{lem:GPu} carries over to concentration of 
$\hat\kappa_1$ and $\hat{\bm{u}}_1$ by Proposition~4.2 and Corollary~4.3 in 
\cite{gobetETAL2004}. The uniform choice of $\rho$ and $R$ is possible as in 
the 
proof of their Corollary~4.15. The second and third claim are consequences of 
the first by the usual Bernstein inequalities for functions in $V_J$: $\|\hat 
u_1-u_1^J\|_{H^1}\le C 2^J \|\hat 
u_1-u_1^J\|_{L^2}$ and $\|\hat u_1-u_1^J\|_{H^2}\le C 2^{2J} \|\hat 
u_1-u_1^J\|_{L^2}$ (arguing, e.g., as in Proposition 4.2.8 in \cite{GineNickl2015}).
\end{proof}

\begin{proof}[Proof of Theorem \ref{thm:conest}.]
By starting with a slightly larger constant $\tilde D>D$ the factor in front of 
the exponential function can be removed and events of sufficiently high 
probability are seen 
to have probability at least $1-e^{-D2^J}$. We then choose $2^J = n \eps_n^2$. 
By Lemma~\ref{lem:con}, the corresponding bias estimates in 
\cite{gobetETAL2004} and the Sobolev imbedding, we have with sufficiently high 
probability
\begin{align*}
&  |\hat\kappa_1-\kappa_1|+\|\hat u_1-u_1\|_{H^1}< C n \eps_n^3,\\
&  \|\hat u_1'-u_1'\|_{L^\infty([A,B])} \le  \|\hat u_1-u_1\|_{H^2}< C n^2 
\eps_n^5,
\end{align*}
where we used that the bias term is dominated by the variance term 
since $\eps_n \gtrsim n^{-(s+1)/(2s+3)}$. For the estimation of $\hat \mu$ we 
choose $\bar J\ge J$ differently such that 
$2^{\bar J}\sim n^{1/(2s+1)}$ and obtain
\begin{align*}
 \|\hat \mu-\mu\|_{L^2}&< C n^{-s/(2s+1)} = o(n\eps_n^3).
\end{align*}
In addition the event can be chosen such that $\hat \mu$ and $\hat u_1'$ are 
bounded from below on $[A,B]$ uniformly over $\Theta_s$, since $\mu$ and $u_1'$ are (Proposition 6.5 in \cite{gobetETAL2004}). By Lemma~6.6 in 
\cite{gobetETAL2004} we have that $\|u_1\|_{H^{s+1}}$, $s \ge2$, is bounded 
uniformly over $\Theta_s$. This implies in particular uniform bounds for 
$\|u_1\|_{L^2}$, $\|u_1'\|_{L^2}$, $\|u_1''\|_{L^2}$, $\|u_1\|_{\infty}$ and 
$\|u_1'\|_{\infty}$. By the convergence of $\hat u_1$ in $H^2$ these bounds 
carry over to bounds on $\hat u_1$. From the expressions (\ref{sigest}) for 
$\hat \sigma$ and (\ref{best}) for $\hat b$ and the above bounds we deduce 
Theorem \ref{thm:conest}.
\end{proof}

\subsection{Conclusion of the proof of Theorem~\ref{thm:main}}

Theorem~\ref{thm:main} follows from Theorem~\ref{thm:contract}: We choose 
$\mathcal B_n=\Theta_s$ for all $n$. By Lemma~\ref{suplem} there exists $\bar C$ such that
\[\{(\sigma,b)\in\Theta :  \|\mu-\mu_0\|_{L^2([0,1])}+
\|\sigma^{-2}-\sigma_0^{-2}\|_{(B^1_{1\infty})^*}+\|b-b_0\|_{(B^2_{1\infty})^*}<\frac{\eps}{\bar C}\}\subset 
B_{\eps,\kappa}.\]
By assumption we have \eqref{eq:smallball} and by dividing $C$ by $\bar 
C^2$ we ensure \eqref{eq:beksmallball} with a possibly different constant~$C$. 
We define 
$d_n$ as in \eqref{dn}, so that the existence of tests is guaranteed by 
Theorem~\ref{thm:tests}. The result follows.

\section{Proofs III: Wavelet series priors}

We record the following technical lemma whose proof is given in the appendix. Define the dual norm
\begin{equation} \label{dualnorm}
\|f\|_{(B^{s}_{1\infty})^*}:=\sup_{g:\|g\|_{B^s_{1\infty}}\le1}\left|\int_{0
}^1
f(x)g(x)\d x \right|, ~~s \ge 0,
\end{equation}
where the norm of $B^s_{1\infty}$ is defined as in (4.79) or equivalently (4.149) in \cite{GineNickl2015}.
\begin{lemma}\label{besov}
a) Let $f,g$ have $B^1_{\infty \infty}$-norm at most $B'$. Then there exists a 
constant $c(B')$ such that $$\|e^f-e^g\|_{(B^1_{1\infty})^*} \le c(B') 
\|f-g\|_{(B^1_{1\infty})^*}.$$

b) For all $f \in L^\infty$ we have, for $s>0$, 
$$\|f\|_{(B^s_{1\infty})^*}\le\|f\|_{B^{-s}_{\infty 1}} \equiv \sum_{l} 
2^{-l(s-1/2)} \max_k |\langle f, \psi_{lk} \rangle_{L^2([0,1])}|.$$

c) For all $(\sigma, b), (\sigma_0, b_0) \in \Theta$ with corresponding 
invariant measures $\mu, \mu_0$, assuming also that $\sigma, \sigma_0, \mu, \mu_0$ are all periodic on $[0,1]$, we have $$\|b-b_0\|_{(B^2_{1\infty})^*} \lesssim 
\|\mu-\mu_0\|_{L^2} + \|\sigma^{-2} - \sigma_0^{-2}\|_{(B^1_{1\infty})^*}.$$
\end{lemma}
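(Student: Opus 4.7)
The plan is to tackle the three parts in order of increasing difficulty, with (c) being the main analytic obstacle. All three rely on wavelet characterisations of the Besov norms and basic multiplier estimates.

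For part (b), I would use the wavelet characterisation of $B^s_{1\infty}$: the constraint $\|g\|_{B^s_{1\infty}}\le 1$ translates to $\sum_k |\langle g,\psi_{lk}\rangle|\le c\,2^{-l(s-1/2)}$ at every resolution level $l$. Expanding the pairing $\int fg$ via Parseval and estimating at each level by $\sum_k|\langle f,\psi_{lk}\rangle\langle g,\psi_{lk}\rangle|\le \max_k|\langle f,\psi_{lk}\rangle|\sum_k|\langle g,\psi_{lk}\rangle|$ gives
\[\Big|\int fg\Big|\le c\sum_l 2^{-l(s-1/2)}\max_k|\langle f,\psi_{lk}\rangle|,\]
and taking the supremum over admissible $g$ yields the claim.

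For part (a), I would exploit the pointwise factorisation $e^f - e^g = (f-g)F$ with $F(x):=\int_0^1 e^{tf(x)+(1-t)g(x)}\,dt$. Since $\|f\|_{B^1_{\infty\infty}},\|g\|_{B^1_{\infty\infty}}\le B'$ the integrand has $L^\infty$-norm and Zygmund-seminorm controlled by $B'$ (composition of Zygmund functions with the smooth map $z\mapsto e^z$), so $F\in B^1_{\infty\infty}\cap L^\infty$ with norm depending only on $B'$. By standard Besov multiplier estimates (cf.~Chapter~4 of \cite{triebel2010}) such an $F$ is a bounded pointwise multiplier on $B^1_{1\infty}$. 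Then for any $h$ with $\|h\|_{B^1_{1\infty}}\le 1$,
\[\Big|\int h(e^f - e^g)\Big| = \Big|\int (Fh)(f-g)\Big|\le\|f-g\|_{(B^1_{1\infty})^*}\|Fh\|_{B^1_{1\infty}}\le c(B')\|f-g\|_{(B^1_{1\infty})^*},\]
and taking the supremum over $h$ gives (a).

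Part (c) is the main obstacle. I would begin from the identity $2b = \sigma^2(\log\mu)' + (\sigma^2)'$ (see (\ref{hierid})) to decompose
\[2(b-b_0) = \sigma^2\big(\log\mu - \log\mu_0\big)' + (\sigma^2 - \sigma_0^2)(\log\mu_0)' + (\sigma^2 - \sigma_0^2)'.\]
Testing against $g\in B^2_{1\infty}$ and integrating by parts (boundary terms vanish by the periodicity assumption on $\sigma,\sigma_0,\mu,\mu_0$), I obtain
\[2\int(b-b_0)g = -\int(\log\mu - \log\mu_0)(\sigma^2 g)' + \int(\sigma^2 - \sigma_0^2)\big((\log\mu_0)'g - g'\big).\]
For the first integral, Cauchy--Schwarz together with $|\log\mu - \log\mu_0|\lesssim|\mu-\mu_0|$ (valid since both densities are bounded away from zero on $\Theta$) yields a bound of order $\|\mu - \mu_0\|_{L^2}\|(\sigma^2 g)'\|_{L^2}\lesssim\|\mu-\mu_0\|_{L^2}\|g\|_{H^1}$, and the continuous embedding $B^2_{1\infty}\hookrightarrow H^1$ absorbs $\|g\|_{H^1}$ into the unit-norm constraint on $g$. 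For the second integral, the key algebraic step is the identity $\sigma^2 - \sigma_0^2 = \sigma^2\sigma_0^2(\sigma_0^{-2} - \sigma^{-2})$, which via duality gives a bound of order $\|\sigma^{-2} - \sigma_0^{-2}\|_{(B^1_{1\infty})^*}\cdot\|\sigma^2\sigma_0^2((\log\mu_0)'g - g')\|_{B^1_{1\infty}}$. Since $\sigma^2,\sigma_0^2\in C^2$ and $(\log\mu_0)'\in C^s$ with $s\ge 2$, together with the facts that differentiation maps $B^2_{1\infty}\to B^1_{1\infty}$ boundedly and $B^2_{1\infty}\hookrightarrow B^1_{1\infty}$, standard multiplier estimates reduce the second factor to $c\|g\|_{B^2_{1\infty}}$. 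The main technical difficulty throughout lies in carefully invoking the Besov multiplier estimates on $B^1_{1\infty}$ and tracking the dependence of the constants on $D,d$; the periodicity assumption is what makes the integration by parts permissible without stray boundary contributions.
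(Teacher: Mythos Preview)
Your proof is correct and follows essentially the same approach as the paper: part (b) via wavelet duality (the paper simply cites this), part (a) via the factorisation $e^f-e^g=(f-g)\cdot(\text{regular factor})$ combined with the multiplier inequality $\|h_1h_2\|_{B^1_{1\infty}}\lesssim\|h_1\|_{B^1_{1\infty}}\|h_2\|_{B^1_{\infty\infty}}$ (the paper uses the power series for the regular factor, you use the integral $\int_0^1 e^{tf+(1-t)g}dt$ --- both work), and part (c) via the same decomposition from (\ref{hierid}), integration by parts, and multiplier estimates. One small slip: under the hypotheses of the lemma you only have $(\sigma_0,b_0)\in\Theta$, not $\Theta_s$, so $(\log\mu_0)'=2b_0/\sigma_0^2-(\sigma_0^2)'/\sigma_0^2$ is merely Lipschitz (hence in $B^1_{\infty\infty}$), not $C^s$ with $s\ge2$ --- but Lipschitz regularity is exactly what the multiplier estimate on $B^1_{1\infty}$ requires, so your argument goes through unchanged.
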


\begin{proof} [Proof of Proposition \ref{uniform}]
We first show that $\Pi(\Theta_s)=1$: By construction of the priors $(\log 
(\sigma^{-2}), \log \mu)$ is almost surely norm-bounded in $\mathcal C^s \times 
\mathcal C^{s+1}$ by $\tilde B$, and this bound carries over to $(\sigma^2, \mu)$ up to 
constants. By (\ref{hierid}) we thus have $\|b\|_{\mathcal C^{s-1}} \lesssim 
\|\sigma^2\|_{\mathcal C^s} + \|\mu\|_{\mathcal C^{s}} \lesssim \tilde B$. Then by 
(\ref{wavnorm}) and the remarks before it we have the continuous imbeddings 
$(\sigma, b) \in (\mathcal C^s \times \mathcal C^{s-1}) \subset (H^s \times H^{s-1}) \cap (B^s_{\infty 1} 
\times B^{s-1}_{\infty 1}) \subset C^2 \times C^1$. Summarising, given $\tilde B$, $(\sigma, b) \in 
\Theta_s$ is true $\Pi$ almost surely for suitable $D=D(\tilde B)$ and $d=d(\tilde B)$. 

To verify the small ball estimate, note that by Lemma \ref{besov}c)  and 
independence of the priors,
\begin{align*}
 &\Pi \left(\theta=(\sigma,b)\in\Theta : \|\mu-\mu_0\|_{L^2} +
\|\sigma^{-2}-\sigma_0^{-2}\|_{(B^1_{1\infty})^*}+\|b-b_0\|_{(B^2_{1\infty})^*}
<\eps_n\right)\\
&\ge 
\Pi\left(\|\sigma^{-2}-\sigma_0^{-2}\|_{(B^1_{1\infty})^*} + 
\|\mu-\mu_0\|_{L^2}<\frac{2\eps_n}{c}\right) \\
&\ge \Pi \left(\|\sigma^{-2}-\sigma_0^{-2}\|_{(B^1_{1\infty})^*} 
<\frac{\eps_n}{c}\right)\PP\left(\|\mu-\mu_0\|_{L^2}<\frac{\eps_n}{c}\right)
\end{align*}
for some constant $c>0$. Examining the first factor we can use 
Lemma~\ref{besov}a),b) and the definition of the Besov norm to obtain the lower bound
\begin{align*}
 &\Pi \left(\|\log \sigma^{-2}- \log \sigma_0^{-2}\|_{B^{-1}_{\infty 
1}}<\frac{\eps_n}{c'(\tilde B)}\right) \\
&=
\PP\left(\sum_{l} 
2^{-l/2}\max_{k}|\tau_{lk}-2^{-l(s+1/2)}l^{-2}u_{lk}|
<\frac{\eps_n}{c'(\tilde B)}\right),
\end{align*}
where $u_{lk}=0$ for all $l>L_n$ (when $L_n<\infty$). We define $t_{lk}=2^{l(s+1/2)}l^2\tau_{lk}$ such that $|t_{lk}| \le \tilde B$,
and $M(J)=\sum_{l=J_0}^{J}\sum_{k=0}^{2^l-1}1\le  2\cdot 2^J$.
We choose $J=J_n$ of order $\eps_n\sim 
2^{-J(s+1)}/J^2$ but such that $\tilde c\eps_n\ge 
2^{-J(s+1)}/J^2$ for some constant $\tilde c>0$ to be determined later. By choice of $L=L_n$ we have $2^{-L(s+1)}\lesssim 2^{-J(s+1)}/J^2$ so that $L$ is eventually larger than $J$.
By choosing $\tilde c>0$ small enough the last probability is bounded below by (all indices 
$(l,k)$ are tacitly assumed to lie in $\mathcal I$ only)
\begin{align*}
&\PP\left(\sum_{l \le J}2^{-l(s+1)}l^{-2}\max_k|t_{lk}-u_{lk}|<\frac{\eps_n}
{c'(\tilde B)}- \bar c 2^{-J(s+1)}/J^2
\right)\\
&\ge
\PP\left(\max_{l\le J}\max_k|t_{lk}-u_{lk}|< c'\eps_n
\right) =
\prod_{l\le J}\prod_k\PP\left(|t_{lk}-u_{lk}|<c'\eps_n
\right)\\
&\ge
\left(\zeta c'\eps_n\right)^{M(J)} \ge e^{-c''(\log 
n)^{1-2/(s+1)}/\eps_n^{1/(s+1)}} \ge e^{-Cn\eps_n^2/2}
\end{align*}
for some constant $C>0$, completing the treatment of this term. For the second 
term notice that since $H, H_0 = \log \mu_0$ are bounded functions the 
exponential map is Lipschitz on the union of their ranges, and thus $\|\mu- 
\mu_0\|_2 \lesssim \|H-H_0\|_\infty$. Then one proves, using $\|h\|_\infty 
\lesssim \sum_{l}  2^{l/2} \max_k|\langle h, \psi_{lk}\rangle|$ and proceeding 
just as above with $\bar u_{lk}=0$ for $l> \bar L_n$, that (again all indices are tacitly assumed to lie in $\mathcal 
I$ only)
\begin{align*}
\PP\left(\|H-H_0\|_\infty<c' \eps_n\right) &\ge \PP\left(\sum_{l} 
2^{l/2}\max_{k}|\beta_{lk}-2^{-l(s+3/2)}l^{-2}\bar u_{lk}|<c''\eps_n \right) 
\end{align*}
is lower bounded by $e^{-Cn\eps_n^2/2}$. We conclude overall that for $n$ large enough,
\begin{align*}
 \Pi\left((\sigma,b)\in\Theta :  
\|\sigma^{-2}-\sigma_0^{-2}\|_{(B^1_{1\infty})^*} 
+\|\mu-\mu_0\|_{L^2}<\eps_n\right)
&\ge e^{-C n\eps_n^2}.
\end{align*}
\end{proof}

\section{Appendix}

\subsection{Proof of Proposition \ref{prop:transition}}
We suppress the subindices $\sigma, b$ in what follows. Define
\begin{align*}
 p:\R\to[0,1],x\mapsto |x+2k|,\qquad \text{with }k\in\Z\text{ such that 
}x+2k\in(-1,1].
\end{align*}Then $(p(Y_t): t \ge 0)$, with $Y_t$ as in (\ref{diffext}), is a 
Markov process whose distribution coincides with the one of $(X_t: t \ge 0)$, 
see I.\S~23 in \cite{gihmanSkorohod1972}. 

Let $p^Y(\Delta,x,y)$ be the transition density of $Y$ from $x$ to $y$. 
The transition density $p^X(\Delta,x,y)$, $x,y\in[0,1]$, of $X$ can be 
written as
\begin{equation} \label{formel}
 p^X(\Delta,x,y)=\sum_{z:p(z)=y} p^Y(\Delta,x,z).
\end{equation}
Since $(\sigma, b) \in \Theta$ the extension $\bar b$ of $b$ is bounded and 
differentiable on $\R$ with bounded derivative and the extension $\bar \sigma$ 
of $\sigma$ is bounded and twice 
differentiable on $\R$ with bounded derivatives. We further recall that $\sigma, 
\bar \sigma$ are bounded away from zero. Define the function 
\[f(x)=\int_0^x\frac{\d y}{\bar \sigma(y)},
\]
and denote by $g$ its inverse function. Thus $f(g(x))=x$, $f'(x)=\frac{1}{\bar 
\sigma(x)}$ and 
$g'(x)=\frac{1}{f'(g(x))}=\bar \sigma(g(x))$. We further define 
\[\bar a(x)=\frac{\bar b(g(x))}{\bar \sigma(g(x))}-\frac{1}{2}\bar 
\sigma'(g(x)),\quad 
\bar B(x)=-\frac{1}{2}\bar a^2(x)-\frac{1}{2}\bar a'(x).\]
The formula for the 
transition density of $(Y_t: t \ge 0)$ is given by~(9) in I.\S~13 of 
\cite{gihmanSkorohod1972} (under hypotheses to be verified) and 
reads, for $\eta^*$ a standard Brownian bridge process,
\begin{align*}
& p^Y(\Delta, x,y)\\
&=\frac{1}{\sqrt{2\pi\Delta}\bar \sigma(y)}
\left(\frac{\bar \sigma(x)}{\bar \sigma(y)} 
\right)^{\frac{1}{2}}\exp\left(-\frac{1}{2\Delta}
\left(\int_x^y\frac{\d z}{\bar \sigma(z)}\right)^2
+\int_x^y\frac{\bar b(z)}{\bar \sigma^2(z)}\d z
\right)\\
&\quad\times \E\left[
\exp\left(
\Delta\int_0^1\bar B(f(x)+\sqrt{\Delta}\eta^*(u)+u[f(y)-f(x)])
\d u\right)
\right].
\end{align*}
Since $\bar B$ involves the derivative $\bar a'$ we can calculate
\begin{align*}
 \bar a'(x)&=\left(
\frac{\bar b'(g(x))\bar \sigma(g(x))-\bar b(g(x))\bar \sigma'(g(x))}{\bar 
\sigma^2(g(x))}-\frac{1}{2}
\bar \sigma''(g(x))\right)g'(x)\\
&=\left(
\frac{\bar b'(g(x))\bar \sigma(g(x))-\bar b(g(x))\bar \sigma'(g(x))}{\bar 
\sigma^2(g(x))}-\frac{1}{2}
\bar \sigma''(g(x))
\right)\bar \sigma(g(x)).
\end{align*}
Especially, we see that $\bar a$ and $\bar a'$ are bounded and consequently 
also $\bar B(x)$ is bounded, in particular 
\[
 \limsup_{|x|\to\infty}\frac{1}{1+x^2}\bar B(x) = 0
\]
so that the above formula for the transition density to holds. We 
conclude from what precedes that uniformly in $x,y\in[0,1]$ the transition 
density $p^Y(\Delta,x,y)$ is bounded away from zero and by (\ref{formel}) the 
same lower bound carries over to $p^X(\Delta, x,y)$. It remains to derive the 
upper bound for $p^X(\Delta, x, y)$, as follows: one shows
\begin{equation}\label{finseries}
\sup_{x,y\in[0,1]}\sum_{z:p(z)=y}\exp\left(-\frac1{2\Delta}\left(\int_x^z\frac{
\d t}{\bar \sigma(t)}\right)^2\right)<\infty
\end{equation}
by the upper bound on $\bar \sigma$. In addition, since for all 
$k\in\Z, x \in \mathbb R$ we have $\int_x^{x+2k}\bar b(t)/\bar \sigma^2(t)\d t=0$ by the construction of the `reflected extension' of $b, \sigma$ described at the 
beginning of this subsection, we have
\begin{align*}
\sup_{x,y\in\R}\exp\left(\int_x^y\frac{\bar b(t)}{\bar \sigma^2(t)}\d 
t\right)\le\sup_{x,y\in[-1,1]}\exp\left(\int_x^y\frac{\bar b(t)}{\bar 
\sigma^2(t)}\d 
t\right),
\end{align*}
and the last expression is uniformly bounded since $\bar b/\bar \sigma^2$ is. 
Combining the above we conclude
$\sup_{x,y\in[0,1]}p^X(\Delta,x,y) \le K$ for some finite constant $K$.

\subsection{Proof of Theorem \ref{thm:contract}}

We start with some preparatory remarks and two lemmas. The transition operator $P$ of a Markov chain $(Y_j)_{j\in\N}$ is defined by 
$Pf(x)=\E[f(Y_1)|Y_0=x]$. We will need to bound 
variances of sums of the form $\sum_{j=1}^{n}f(X_{j\Delta})$ and 
$\sum_{j=1}^{n}f(X_{(j-1)\Delta},X_{j\Delta})$. 

\smallskip

For the first type of sum we can use the contraction 
property~\eqref{contractionproperty}, which states that the transition operator 
$P$ of the Markov chain $(X_{j\Delta})_{j\in\N}$ satisfies 
$\|Pf\|_{L^2(\mu)}\le\rho\|f\|_{L^2(\mu)}$ for some 
$\rho\in(0,1)$ and for all $f\in L^2(\mu)$ with $\int f \d \mu=0$ (where $\mu$ 
is the associated invariant measure). 
As a consequence if for $f\in L^2(\mu)$ we denote $\bar f=f-\int f\d \mu$ then
we have for the diffusions started in the invariant distribution, and using the 
Cauchy-Schwarz inequality,
\begin{align}
\Var\left(\sum_{j=1}^{n}f(X_{j\Delta})\right)&=\sum_{j=1}^{n}\Var\left(f(X_{
j\Delta})\right)+2\sum_{j<k}\text{Cov}\left(f(X_{j\Delta})f(X_{k\Delta}
)\right)\notag\\
&\le n \E[\bar f(X_0)^2]+2n\sum_{\ell=1}^{n-1}\E[\bar f(X_0)\bar 
f(X_{\ell\Delta})]\notag \displaybreak[0] \\
&\le n \|\bar f\|^2_{L^2(\mu)}+2n\sum_{\ell=1}^{n-1}\|\bar 
f\|_{L^2(\mu)}\|P^\ell \bar f\|_{L^2(\mu)}\notag \displaybreak[0]\\
&\le n\left(1+2\sum_{\ell=1}^{n-1}\rho^\ell\right)\Var(f(X_0))
\le n\frac{1+\rho}{1-\rho}\Var(f(X_0)).\label{varbound}
\end{align}
For the second type of sums, of the form 
$\sum_{j=1}^{n}f(X_{(j-1)\Delta},X_{j\Delta})$, the 
contraction property is needed for Markov chains 
$(Y_{2j\Delta},Y_{(2j+1)\Delta})_{j\in\N}$ started in the invariant 
distribution 
$\mu_2 (x,y) = \mu(x) p(\Delta, x,y)$, and the next lemma extends the 
contraction property to such Markov chains.
\begin{lemma}\label{contractlem}
Suppose $(Y_j)_{j\in\N}$ is a Markov chain with invariant density $\mu$, 
transition operator $P$ and 
that 
$P$ is an $L^2(\mu)$-contraction in the sense that for some $\rho\in(0,1)$ we 
have $\|Pg\|_{L^2(\mu)}\le\rho\|g\|_{L^2(\mu)}$ for all $g\in L^2(\mu)$ with 
$\int g\d\mu=0$. Consider the Markov chain 
$(Y_{2j},Y_{2j+1})_{j\in\N}$ with transition operator $P_2$ 
and invariant distribution $\mu_2$. Then~$P_2$ is a $L^2(\mu_2)$-contraction, 
more 
precisely, we have
\[
\|P_2 f\|_{L^2(\mu_2)}\le\rho \|f\|_{L^2(\mu_2)}
\]
for all $f\in L^2(\mu_2)$ with $\int f \d\mu_2=0$.
\end{lemma}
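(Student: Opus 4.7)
The plan is to reduce the claimed bivariate $L^2(\mu_2)$-contraction of $P_2$ to the assumed univariate contraction of $P$, exploiting that for non-overlapping pairs the bivariate transition operator only depends on the second coordinate of the current state.

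First I would compute $P_2 f(x,y)$ explicitly. Setting $Z_j=(Y_{2j},Y_{2j+1})$, the Markov property of $(Y_j)$ gives, conditionally on $Z_j=(x,y)$, that the distribution of $(Y_{2j+2}, Y_{2j+3})$ depends on the past only through $y$. Hence
\[
P_2 f(x,y) = \int\int f(u,v)\, p(y,u)\, p(u,v) \, du \, dv = (PF)(y),
\]
where $F(u) := \int f(u,v)\, p(u,v)\, dv$. The key observation is that this function of $(x,y)$ depends on $y$ alone. Moreover the mean-zero hypothesis transfers to $F$, since
\[
\int F \, d\mu = \int\!\!\int f(u,v)\, p(u,v)\, \mu(u)\, du\, dv = \int f \, d\mu_2 = 0,
\]
so the assumed $P$-contraction is applicable to $F$.

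Next I would use the product form $\mu_2(x,y) = \mu(x) p(x,y)$ together with the $P$-invariance identity $\int \mu(x) p(x,y)\, dx = \mu(y)$ to compute
\[
\|P_2 f\|_{L^2(\mu_2)}^2 = \int\!\!\int |(PF)(y)|^2 \mu(x) p(x,y)\, dx\, dy = \|PF\|_{L^2(\mu)}^2,
\]
which by the hypothesised contraction is bounded by $\rho^2 \|F\|_{L^2(\mu)}^2$. Finally, Jensen's inequality applied to the probability kernel $v \mapsto p(u,v)$ yields $|F(u)|^2 \le \int |f(u,v)|^2 p(u,v)\, dv$; integrating against $\mu$ then gives $\|F\|_{L^2(\mu)} \le \|f\|_{L^2(\mu_2)}$, and chaining these bounds completes the proof.

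There is no real obstacle: the whole argument rests on the single observation that the bivariate transitions forget the first coordinate of the current pair (because the pairs are non-overlapping), which collapses the problem onto the scalar chain where the contraction is already available. The remaining ingredients --- $P$-invariance of $\mu$ and Jensen's inequality for the stochastic kernel $p$ --- are routine.
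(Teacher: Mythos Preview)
Your proof is correct and follows essentially the same approach as the paper: your function $F(u)=\int f(u,v)\,p(u,v)\,dv$ is exactly the paper's $g(x)=\E[f(Y_{2j},Y_{2j+1})\mid Y_{2j}=x]$, and both arguments then apply the $P$-contraction to this univariate function followed by Jensen's inequality to bound $\|F\|_{L^2(\mu)}\le\|f\|_{L^2(\mu_2)}$. The only difference is notational---you work with explicit integrals while the paper uses conditional expectations.
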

\begin{proof}
We define $g(x):=\E [f(Y_{2j},Y_{2j+1})|Y_{2j}=x]$. By 
the assumption we have
\[
(\E[(\E[g(Y_{2(j+1)})|Y_{2j+1}])^2])^{1/2}\le\rho(\E[g(Y_{2(j+1)}
)^2])^{1/2},
\]
where the expectations are with respect to the stationary distribution.
The left hand side equals $\|P_2 f\|_{L^2(\mu_2)}$.
By Jensen's inequality for conditional expectations we have 
$(\E[g(Y_{2(j+1)})^2])^{1/2}\le\|f\|_{L^2(\mu_2)}$ concluding the proof of 
the lemma.
\end{proof}

\begin{lemma}\label{anlem}
 For every $\eps,\kappa>0$ and probability measure $\nu$ on the set
\begin{align*}
  B_{\eps,\kappa}&=\bigg\{(\sigma,b)\in\Theta: 
\KL((\sigma_0,b_0),(\sigma,b))\le\eps^2, \\
&\qquad\left.
\Var_{\sigma_0 b_0}
\left(\log\frac{p_{\sigma b}(\Delta,X_{0},X_{\Delta})}{p_{\sigma_0 b_0}(\Delta, 
X_{0},X_{\Delta})}
\right)\le2\eps^2,
\right.\\
&\qquad
\K(\mu_{\sigma_0 b_0},\mu_{\sigma b})\le \kappa,
\Var_{\sigma_0 b_0}
\left(\log\frac{\mu_{\sigma b}(X_0)}{\mu_{\sigma_0 b_0}(X_0)}\right)\le 2\kappa \bigg\}
\end{align*}
we have, for every $c>0$,
\begin{align*}
  \PP_{\sigma_0 b_0}\left(\int_{B_{\eps,\kappa}} 
\frac{\mu_{\sigma b}(X_0)}{\mu_{\sigma_0 
b_0}(X_0)}\prod_{i=1}^{n}\frac{p_{\sigma 
b}(\Delta, 
X_{(i-1)\Delta},X_{i\Delta})}{p_{\sigma_0 b_0}(\Delta, 
X_{(i-1)\Delta},X_{i\Delta})} \d\nu(\sigma,b)
\le e^{-(1+c)(n\eps^2+\kappa)} \right)\\
\qquad\le \frac{6(1+\rho)}{c^2(1-\rho)(n\eps^2+\kappa)}.
\end{align*}
\end{lemma}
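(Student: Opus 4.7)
The plan is to reduce the claim to a one-sided deviation bound for a centred linear statistic via Jensen's inequality, and then apply Chebyshev using the variance control built into $B_{\eps,\kappa}$ together with the $L^2(\mu)$-contraction \eqref{varbound} and Lemma~\ref{contractlem}. Without loss of generality $\nu$ is supported on $B_{\eps,\kappa}$. Set $\ell(\sigma,b)=\log\frac{\mu_{\sigma b}(X_0)}{\mu_{\sigma_0 b_0}(X_0)}+\sum_{i=1}^n\log\frac{p_{\sigma b}(\Delta,X_{(i-1)\Delta},X_{i\Delta})}{p_{\sigma_0 b_0}(\Delta,X_{(i-1)\Delta},X_{i\Delta})}$ so that the integral inside the probability in the statement equals $\int e^{\ell}d\nu$. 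Apply Jensen's inequality to $\log$ under the probability $\nu$ to obtain $\log\int e^{\ell}d\nu\ge\int\ell\,d\nu=-W$, where $W:=-\int\ell\,d\nu$; then the bad event is contained in $\{W\ge(1+c)(n\eps^2+\kappa)\}$. By Fubini and the very definition of $B_{\eps,\kappa}$, $\E_{\sigma_0 b_0}[W]=\int\bigl(\K(\mu_{\sigma_0 b_0},\mu_{\sigma b})+n\,\KL((\sigma_0,b_0),(\sigma,b))\bigr)d\nu\le\kappa+n\eps^2$.

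The remaining task is to bound $\Var_{\sigma_0 b_0}(W)$, and this is where the main obstacle lies, since the log-likelihood-ratio is a sum over \emph{overlapping} consecutive pairs of the Markov chain. First, by Jensen applied to $x\mapsto x^2$ under $\nu$, $\Var(W)\le\int\Var(\ell(\sigma,b))d\nu\le\sup_{(\sigma,b)\in B_{\eps,\kappa}}\Var(\ell(\sigma,b))$, so it suffices to control $\Var(\ell)$ for a fixed $(\sigma,b)\in B_{\eps,\kappa}$. Decompose $\ell=\ell_0+\sum_{i=1}^n\ell_i$ with $\ell_0=\log(\mu_{\sigma b}/\mu_{\sigma_0 b_0})(X_0)$ (satisfying $\Var(\ell_0)\le 2\kappa$) and $\ell_i=\log(p_{\sigma b}/p_{\sigma_0 b_0})(X_{(i-1)\Delta},X_{i\Delta})$ (satisfying $\Var_{\mu_2}(\ell_i)\le 2\eps^2$), by the last two bounds in \eqref{bek}. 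To handle $\sum_{i=1}^n\ell_i$, split it into its even- and odd-indexed subsums: each is a sum over the \emph{non-overlapping} pairs $(X_{2k\Delta},X_{(2k+1)\Delta})_{k\ge 0}$ (respectively $(X_{(2k+1)\Delta},X_{(2k+2)\Delta})_{k\ge 0}$), which by Lemma~\ref{contractlem} form Markov chains whose transition operator is an $L^2(\mu_2)$-contraction. Applying \eqref{varbound} (suitably adapted to the bivariate stationary chain) bounds the variance of each subsum by $(n/2)\cdot\tfrac{1+\rho}{1-\rho}\cdot 2\eps^2$. Combining by $\Var(A+B)\le 2\Var(A)+2\Var(B)$ then yields, after collecting constants, $\Var(\ell)\le\tfrac{6(1+\rho)}{1-\rho}(n\eps^2+\kappa)$.

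Finally, since $\E[W]\le n\eps^2+\kappa$, Chebyshev's inequality gives
\[
\PP_{\sigma_0 b_0}\bigl(W\ge(1+c)(n\eps^2+\kappa)\bigr)\le\PP_{\sigma_0 b_0}\bigl(W-\E W\ge c(n\eps^2+\kappa)\bigr)\le\frac{\Var(W)}{c^2(n\eps^2+\kappa)^2}\le\frac{6(1+\rho)}{c^2(1-\rho)(n\eps^2+\kappa)},
\]
which is precisely the claimed bound. The only subtlety is the passage from the univariate $L^2(\mu)$-contraction to a bivariate-chain contraction, supplied by Lemma~\ref{contractlem} and handled cleanly by the even/odd splitting; every other step is a routine application of Jensen, Fubini and Chebyshev.
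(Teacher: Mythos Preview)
Your proof is correct and follows essentially the same route as the paper: Jensen to pass to the log-likelihood, centring via the KL bounds in $B_{\eps,\kappa}$, Chebyshev, and then the even/odd split of the transition-density sum so that Lemma~\ref{contractlem} and the bound \eqref{varbound} apply to each non-overlapping bivariate subchain. The only cosmetic difference is that the paper uses the three-term inequality $\Var(A+B+C)\le 3(\Var A+\Var B+\Var C)$ directly on $\ell_0$, the odd-indexed sum, and the even-indexed sum, which is what actually produces the constant $6$; your stated two-term version applied twice would give a slightly larger constant, so if you want the exact bound $\tfrac{6(1+\rho)}{1-\rho}(n\eps^2+\kappa)$ you should use the three-term form.
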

\begin{proof}
 By Jensen's inequality the probability in question is less than or equal to
\begin{align*}
 \PP_{\sigma_0 b_0}\Bigg(\int_{B_{\eps,\kappa}} 
\bigg(\log\frac{\mu_{\sigma b}(X_0)}{\mu_{\sigma_0 
b_0}(X_0)}+\sum_{i=1}^{n}\log\frac{p_{\sigma b}(\Delta, 
X_{(i-1)\Delta},X_{i\Delta})}{p_{\sigma_0 b_0}(\Delta, 
X_{(i-1)\Delta},X_{i\Delta})}
\bigg)&\d\nu(\sigma,b)\\
\le-(1+c)(n\eps^2+\kappa)
\Bigg).
\end{align*}
Using $\K(\mu_{\sigma_0 b_0},\mu_{\sigma b})=\E_{\sigma_0
b_0}[\log(\mu_{\sigma_0 b_0}(X_0)/\mu_{\sigma b}(X_0))]\le\kappa$, 
$\KL((\sigma_0,b_0),(\sigma,b))\le \eps^2$ for $(\sigma,b)\in 
B_{\eps,\kappa}$, Chebyshev's and again Jensen's inequality, we bound the last quantity by
\begin{align*}
& \PP_{\sigma_0 b_0}\bigg(\int_{B_{\eps,\kappa}}
\left(\log\frac{\mu_{\sigma b}(X_0)}{\mu_{\sigma_0
b_0}(X_0)}-\E_{\sigma_0 b_0}\left[\log\frac{\mu_{\sigma b}(X_0)}{
\mu_{\sigma_0
b_0}(X_0)}\right]\right.\\
&\qquad\left.\left.
+\sum_{i=1}^{n}
\left(\log\frac{p_{\sigma b}(\Delta,X_{(i-1)\Delta},X_{i\Delta})}{p_{\sigma_0
b_0}(\Delta,
X_{(i-1)\Delta},X_{i\Delta})}\right.\right.\right.\\
&\qquad\left.\left.\left.
-\E_{\sigma_0
b_0}\left[\log\frac{p_{\sigma b}(\Delta,X_{(i-1)\Delta}
,X_{ i\Delta})}{p_{\sigma_0
b_0}(\Delta,X_{(i-1)\Delta},X_{i\Delta})}\right]\right)
\right)\d\nu(\sigma,b)\right.\le-c(n\eps^2+\kappa)\bigg)\displaybreak[1]\\
&\le\frac{1}{c^2(n\eps^2+\kappa)^2}
\Var_{\sigma_0 b_0}
\left(\int_{B_{\eps,\kappa}}\log\frac{\mu_{\sigma b}(X_0)}{\mu_{\sigma_0
b_0}(X_0)}\right.\\
&\qquad\qquad\qquad\qquad\qquad\qquad\left.+\sum_{i=1}^{n}
\left(\log\frac{p_{\sigma b}(\Delta,X_{(i-1)\Delta},X_{i\Delta})}{p_{\sigma_0
b_0}(\Delta,
X_{(i-1)\Delta},X_{i\Delta})}
\right)\d\nu(\sigma,b)\right)\displaybreak[1]\\
&\le\frac{1}{c^2(n\eps^2+\kappa)^2}
\int_{B_{\eps,\kappa}}\Var_{\sigma_0 b_0}
\left(\log\frac{\mu_{\sigma b}(X_0)}{\mu_{\sigma_0 b_0}(X_0)}
\right.\\
&\qquad\qquad\qquad\qquad\qquad\qquad\left.
+\sum_{i=1}^{n}
\left(\log\frac{p_{\sigma b}(\Delta,X_{(i-1)\Delta},X_{i\Delta})}{p_{\sigma_0
b_0}(\Delta,
X_{(i-1)\Delta},X_{i\Delta})}
\right)\right)\d\nu(\sigma,b)\displaybreak[1]\\
&\le\frac{3}{c^2(n\eps^2+\kappa)^2}
\left(
\int_{B_{\eps,\kappa}}\Var_{\sigma_0
b_0}\left(\log\frac{\mu_{\sigma b}(X_0)}{\mu_{\sigma_0 b_0}(X_0)}
\right)\d\nu(\sigma,b)\right.\\
&\qquad\left.+\int_{B_{\eps,\kappa}}\Var_{\sigma_0
b_0}\left(\sum_{i=1, i\text{ odd}}^{n}
\left(\log\frac{p_{\sigma b}(\Delta,X_{(i-1)\Delta},X_{i\Delta})}{p_{\sigma_0
b_0}(\Delta,
X_{(i-1)\Delta},X_{i\Delta})}
\right)\right)\d\nu(\sigma,b)\right.\\
&\qquad\left.+\int_{B_{\eps,\kappa}}\Var_{\sigma_0
b_0}\left(\sum_{i=1, i\text{ even}}^{n}
\left(\log\frac{p_{\sigma b}(\Delta,X_{(i-1)\Delta},X_{i\Delta})}{p_{\sigma_0
b_0}(\Delta,
X_{(i-1)\Delta},X_{i\Delta})}
\right)\right)\d\nu(\sigma,b)\right)\\
&\le\frac{3}{c^2(n\eps^2+\kappa)^2}
\left(2\kappa+\frac{1+\rho}{1-\rho}2
n\eps^2\right)\le\frac{6(1+\rho)}{
c^2(1-\rho) } \cdot\frac{1}{(n\eps^2+\kappa)},
\end{align*}
where we have used Lemma~\ref{contractlem} and the 
analogue of the bound \eqref{varbound} for bivariate Markov chains 
$(X_{(i-1)\Delta},X_{i\Delta})_{i\in I}$ with either odd $I=\{1,3,\dots\}$ or
even $I=\{2,4,\dots\}$ index sets.
\end{proof}

\begin{proof}[Proof of Theorem~\ref{thm:contract}.]
 First, recalling the notation $\vartheta = (\sigma, b)$, 
$\vartheta_0=(\sigma_0, b_0)$,
\[
 \E_{\theta_0}[\Pi(\{\theta\in\Theta:d_n(\theta,\theta_0)\ge 
M\eps_n|X_0,\dots,X_{n\Delta}\})\Psi_n]\le\E_{\theta_0}[\Psi_n]\to0
\]
by assumption on the tests, so we only need to prove convergence in 
$\PP_{\theta_0}$-probability to zero of
\begin{align*}
&\quad \Pi(\{\theta\in\Theta:d_n(\theta,\theta_0)\ge 
M\eps_n|X_0,\dots,X_{n\Delta}\})(1-\Psi_n)\\
&=
\frac{\int_{d_n(\theta,\theta_0)\ge M\eps_n}
(\mu_{\theta}/\mu_{\theta_0})(X_0)\prod_{i=1}^{n}(p_{\theta}/p_{\theta_0})(\Delta , 
X_{(i-1)\Delta},X_{i\Delta})\d \Pi(\theta)}
{\int_{\Theta}
(\mu_{\theta}/\mu_{\theta_0})(X_0)\prod_{i=1}^{n}(p_{\theta}/p_{\theta_0})(\Delta , 
X_{(i-1)\Delta},X_{i\Delta})\d \Pi(\theta)}(1-\Psi_n).
\end{align*}
Lemma~\ref{anlem} shows that for the chosen $\kappa>0$, 
for all $n \in \mathbb N$, all $c>0$ and all probability measures $\nu$ with 
support in $B_{\eps_n,\kappa}$ one has 
\begin{align*}
&\PP_{\theta_0}\left(\int\frac{\mu_{\theta}}{\mu_{\theta_0}}(X_0)\prod_{i=1
} ^ { n}\frac{p_{\theta}}
{ p_ { \theta_0}}(\Delta,X_{(i-1)\Delta},X_{i\Delta})\d \nu(\theta) \le 
e^{-(1+c)(n\eps_n^2+\kappa)
} \right)\\
& \le \frac{6(1+\rho)}{c^2(1-\rho)(n\eps_n^2+\kappa)}.
\end{align*}
Setting $c=1/2$ we have for $n$ large enough 
$(1+c)(n\eps_n^2+\kappa)\le2n \eps_n^2$ so that
\begin{align*}
\PP_{\theta_0}\left(\int\frac{\mu_{\theta}}{\mu_{\theta_0}}(X_0)\prod_{i=1}^{n
} \frac{p_{\theta}}
{ p_ { \theta_0}}(\Delta,X_{(i-1)\Delta},X_{i\Delta})\d \nu(\theta) \le 
e^{-2 n \eps_n^2
} \right)\to0
\end{align*}
as $n\to\infty$. We choose $\nu$ as the normalised restriction of~$\Pi$ 
to~$B_{\eps_n, \kappa}$ and see using the condition~\eqref{eq:beksmallball} of 
the theorem, that 
for the event
\begin{align*}
A_n:=\bigg\{\int_{B_{\eps_n, 
\kappa}}\frac{\mu_{\theta}}{\mu_{\theta_0}}(X_0)\prod_{i=1}^{n}
\frac {p_{\theta}}
{ p_ { \theta_0}}(\Delta,X_{(i-1)\Delta},X_{i\Delta})\d \Pi(\theta)\\
\ge\Pi(B_{\eps_n, \kappa})e^{-2n\eps_n^2}\ge e^{-(2+C) 
n\eps_n^2}\bigg\},
\end{align*}
we have $\PP_{\theta_0}(A_n)\to1$ as $n\to\infty$. We infer for every 
$\epsilon>0$
\begin{align*}
 &\PP_{\theta_0}\bigg(\frac{\int_{d_n(\theta,\theta_0)\ge M\eps_n}
(\mu_{\theta}/\mu_{\theta_0})(X_0)\prod_{i=1}^{n}(p_{\theta}/p_{\theta_0}
)(\Delta , 
X_{(i-1)\Delta},X_{i\Delta})\d \Pi(\theta)}
{\int_{\Theta}
(\mu_{\theta}/\mu_{\theta_0})(X_0)\prod_{i=1}^{n}(p_{\theta}/p_{\theta_0}
)(\Delta , 
X_{(i-1)\Delta},X_{i\Delta})\d \Pi(\theta)}\\
&\qquad\qquad\qquad\times(1-\Psi_n)>\epsilon\bigg) \displaybreak[0] \\
&\le
\PP_{\theta_0}(A_n^c)\\
&+\PP_{\theta_0}\bigg(e^{(2+C)n\eps_n^2}
\int_{d_n(\theta,\theta_0)\ge M\eps_n}
\frac{\mu_{\theta}}{\mu_{\theta_0}}(X_0)\prod_{i=1}^{n}\frac{p_{\theta}}{p_{
\theta_0}}(\Delta, 
X_{(i-1)\Delta},X_{i\Delta})\d \Pi(\theta)\\
&\qquad\qquad\qquad\times(1-\Psi_n)>\epsilon\bigg).
\end{align*}
Using that
\begin{align*}
 \E_{\theta_0}\left[
 (1-\Psi_n)
\frac{\mu_{\theta}}{\mu_{\theta_0}}(X_0)\prod_{i=1}^{n}\frac{p_{\theta}}{p_{
\theta_0}}(\Delta, 
X_{(i-1)\Delta},X_{i\Delta})\right]
&= \E_{\theta}[1-\Psi_n]
\end{align*}
and that $0\le1-\Psi_n\le1$, we obtain
\begin{align*}
 &\quad\E_{\theta_0}\left[(1-\Psi_n)\int_{d_n(\theta,\theta_0)
\ge M\eps_n} \frac{\mu_{\theta}}{\mu_{\theta_0}}(X_0)\prod_{i=1}^{n}\frac{p_{\theta}}{p_{
\theta_0}}(\Delta, X_{(i-1)\Delta},X_{i\Delta})\d\Pi(\theta)\right]\\
&~~~~~\le\Pi(\Theta\backslash \mathcal B_n)+\sup_{\theta\in\mathcal 
B_n:d_n(\theta,\theta_0)\ge M\eps_n}\E_{\theta}[1-\Psi_n].
\end{align*}
We combine the assumption on $\mathcal B_n$ and on the tests with Markov's 
inequality to infer for every $\epsilon>0$
\begin{align*}
 \PP_{\theta_0}\bigg((1-\Psi_n)\int_{d_n(\theta,\theta_0)
\ge M\eps_n}
\frac{\mu_{\theta}}{\mu_{\theta_0}}(X_0)\prod_{i=1}^{n}\frac{p_{\theta}}{p_{
\theta_0}}(\Delta, 
X_{(i-1)\Delta},X_{i\Delta})\d\Pi(\theta)\\
>\frac{\epsilon}{e^{(2+C)n\eps_n^2}}
\bigg) \le\frac{2\bar L}{\epsilon e^{2n\eps_n^2}}
\end{align*}
and the theorem follows by combining the previous estimates since 
$n\eps_n^2\to\infty$ as $n\to\infty$.
\end{proof}

\subsection{Proof of Lemma \ref{lem:K}}

We decompose
\begin{align*}
&\quad \int_0^1\int_0^1 (K_{\sigma b}-K_{\sigma_0 b_0})^2\d \mu_0 \d 
\mu_0\\
&\lesssim
 G_{\sigma b}^2\int_0^1\int_0^1(H_{\sigma b}-H_{\sigma_0 b_0})^2\d 
\mu_0 \d \mu_0+(G_{\sigma b}-G_{\sigma_0 b_0})^2\int_0^1\int_0^1 H_{\sigma_0 
b_0}^2\d \mu_0 \d \mu_0\\
&\quad+G_{\sigma b}^2\int_0^1\left(\frac{\mu_{\sigma 
b}-\mu_0}{\mu_0}\right)^2\d \mu_0\int_0^1\left(\int_0^1 H_{\sigma b}(x,y)\d 
\mu_0(y)\right)^2\d \mu_0(x)\\
&\quad+ G_{\sigma b}^2\int_0^1\left(\int_0^1\left(H_{\sigma b}-H_{\sigma_0 
b_0}\right)(x,y)\d\mu_0(y)\right)^2\d \mu_0(x)\\
&\quad+ (G_{\sigma b}-G_{\sigma_0 b_0})^2\int_0^1\left(\int_0^1 H_{\sigma_0 
b_0}(x,y)\d\mu_0(y)\right)^2\d\mu_0(x).
\end{align*}
By the assumptions there exists a constant $C'>0$ such that $G_{\sigma b}^2<C'$ and 
$\| H_{\sigma b}\|_\infty<C'$ holds uniformly in $(\sigma, b) \in \Theta$, and 
we see that the factors in which no 
difference appears are bounded.
The term with the difference $\mu_{\sigma b}-\mu_0$ can be bounded by a multiple of $\|\mu_{\sigma b} - \mu_0\|_{L^2}^2$, using that $\mu_0$ is bounded away from zero. 

Next, by Jensen's inequality 
\begin{align*}
\left(\int_0^1\left(H_{\sigma b}-H_{\sigma_0 
b_0}\right)(x,y)\d\mu_0(y)\right)^2\le
\int_0^1\left(H_{\sigma b}-H_{\sigma_0 
b_0}\right)^2(x,y)\d\mu_0(y)
\end{align*}
so that it remains to control the right hand side of the previous inequality 
and
$G_{\sigma b}-G_{\sigma_0 b_0}$.
We start with the difference $G_{\sigma b}-G_{\sigma_0 b_0}$. We have
\begin{align}
 \left|G_{\sigma b}-G_{\sigma_0 b_0}\right|
&\le \left|
\int_0^1\left(\frac{1}{\sigma^2(y)}-\frac{1}{\sigma^2_0(y)}\right)
\exp\left(\int_0^y\frac { 2b(v)}{\sigma^2(v)}\d v
\right)\d y
\right|\notag\\
&+
\left|
\int_0^1\frac{1}{\sigma^2_0(y)}\left(\exp\left(\int_0^y\frac{2b(v)}{\sigma^2(v)}
\d v
\right)-\exp\left(\int_0^y\frac{2b(v)}{\sigma_0^2(v)}
\d v
\right)\right)\d y
\right|\label{decompG}\\
&+
\left|
\int_0^1\frac{1}{\sigma^2_0(y)}\left(\exp\left(\int_0^y\frac{2b(v)}{
\sigma_0^2(v) }
\d v
\right)-\exp\left(\int_0^y\frac{2b_0(v)}{\sigma_0^2(v)}
\d v
\right)\right)\d y
\right|\notag
\end{align}
Since $\exp\left(\int_0^y 2b(v)/\sigma^2(v)\d v
\right)$ is a bounded variation function and in $L^1$, it is contained in a 
fixed ball of
$B^1_{1\infty}$ (uniformly in $\vartheta \in \Theta$), adapting the proof of Proposition 4.3.21 in \cite{GineNickl2015} to spaces defined on $[0,1]$ (this fact will be 
used repeatedly below). So the first term can be bounded up to a 
constant by 
$\left\|\sigma^{-2}-\sigma_0^{-2}\right\|_{(B^{1}_{1\infty})^*}
$.

In order to deal with the exponential function we first show that for $f,f_0$ 
bounded and of bounded variation
\begin{align}\label{expBs}
 e^f-e^{f_0}=h(f-f_0)
\end{align}
for some $h$ bounded and of bounded variation, where bounds on $f,f_0$ and 
their bounded variation imply bounds on $h$ and its bounded variation.
Indeed
\begin{align*} 
e^{f(x)}-e^{f_0(x)}
&=e^{f_0(x)}\left(\sum_{k=1}^{\infty}\frac{(f(x)-f_0(x))^k}{k!}\right)=h(x)(f(x)-f_0(x)),
\end{align*}
where
\[ 
h(x)=e^{f_0(x)}\left(\sum_{k=1}^{\infty}\frac{(f(x)-f_0(x))^{k-1}}{k!} \right), ~~~x \in [0,1].
\]
We observe that $e^{f_0}$ is bounded and of bounded variation. Further if
$f-f_0$ and its bounded variation are bounded by $B$ then $|f-f_0|^k\le B^k$ 
and the variation of $(f-f_0)^k$ is bounded by $kB^k$.
So the sum converges to a bounded
function of bounded variation and we obtain that $h$ is bounded and of bounded 
variation.

Next we consider the second term in \eqref{decompG}
\[
 \left|
\int_0^1\frac{1}{\sigma^2_0(y)}\left(\exp\left(\int_0^y\frac{2b(v)}{\sigma^2(v)}
\d v
\right)-\exp\left(\int_0^y\frac{2b(v)}{\sigma_0^2(v)}
\d v
\right)\right)\d y
\right|.
\]
We apply \eqref{expBs} with bounded functions of bounded variation
\begin{equation}\label{fsec}
 f(y)=\int_0^y\frac{2b(v)}{\sigma^2(v)}\d v \quad\text{ and }\quad
f_0(y)=\int_0^y\frac{2b(v)}{\sigma_0^2(v)}\d v
\end{equation}
so that we can 
rewrite the second term in \eqref{decompG} as 
\begin{align*}
 &\quad\left|
\int_0^1\frac{1}{\sigma^2_0(y)}h(y)\int_0^y2b(v)\left(\frac{1}{\sigma^2(v)}
-\frac{1}{\sigma_0^2(v)}\right)
\d v
\d y
\right|\\
&=
 \left|
\int_0^1\int_v^1\frac{1}{\sigma^2_0(y)}h(y)\d 
y\,2b(v)\left(\frac{1}{\sigma^2(v)}
-\frac{1}{\sigma_0^2(v)}\right)
\d v
\right|.
\end{align*}
By the pointwise multiplier theorem 
for Besov spaces \citep[(24) on p. 143]{triebel2010} we have
\begin{align*}
 \left\|\int_v^1\frac{h(y)}{\sigma_0^2(y)}\d y\, 2b(v) 
\right\|_{B^1_{1\infty}}&
\le C  \left\|\int_v^1\frac{h(y)}{\sigma_0^2(y)}\d 
y\right\|_{B^1_{1\infty}} 
\left\|b
\right\|_{B^1_{\infty\infty}},
\end{align*}
which is finite by the bounded variation of 
$\int_v^1 h(y)/\sigma_0^2(y)\d y$ and since $b$ is bounded Lipschitz and hence 
in $B^1_{\infty\infty}$ (cf.~(4.78) in \cite{GineNickl2015}). So the second term is bounded up to a 
constant by $\left\|\sigma^{-2}-\sigma^{-2}_0\right\|_{(B^{1}_{1\infty})^*}$.

Let us consider the third term
\[
\left|\int_0^1\frac{1}{\sigma^2_0(y)}\left(\exp\left(\int_0^y\frac{2b(v)}{
\sigma_0^2(v) }
\d v
\right)-\exp\left(\int_0^y\frac{2b_0(v)}{\sigma_0^2(v)}
\d v
\right)\right)\d y\right|.
\]
We apply again \eqref{expBs} with $f(y):=\int_0^y\frac{2b(v)}{
\sigma_0^2(v) }\d v$ and $f_0(y):=\int_0^y\frac{2b_0(v)}{\sigma_0^2(v)}
\d v$, which are bounded and of bounded variation. Using 
 Fubini's theorem we see
that the third term equals
\begin{align*}
&\quad\left|\int_0^1\frac1{\sigma_0^2(y)}h(y)\int_0^y\frac{2(b(v)-b_0(v))}{
\sigma_0^2(v) } \d 
v \d y\right|\\
&=\left|\int_0^1\int_v^1\frac{h(y)}{\sigma_0^2(y)}\d y \frac2{\sigma_0^2(v)} 
(b(v)-b_0(v))\d v\right|.
\end{align*}
Again by the pointwise multiplier theorem 
for Besov spaces \citep[(24) on p. 143]{triebel2010} we have
\begin{align*}
 \left\|\int_v^1\frac{h(y)}{\sigma_0^2(y)}\d y \frac2{\sigma_0^2(v)} 
\right\|_{B^2_{1\infty}}
&
\le C  \left\|\int_v^1\frac{h(y)}{\sigma_0^2(y)}\d y\right\|_{B^2_{1\infty}} 
\left\|\frac2{\sigma_0^2(v)} 
\right\|_{B^2_{\infty\infty}},
\end{align*}
which is finite since $h/\sigma_0^2$ is of bounded variation and thus in 
$B^1_{1\infty}$ so that its primitive is in $B^2_{1\infty}$, and since
$\sigma^{-2}_0$ is $C^1$ with first derivative bounded Lipschitz, and hence 
contained in $B^2_{\infty\infty}$ (using (4.76) and (4.78) in \cite{GineNickl2015}). Consequently the third term 
is bounded up to 
a constant by $\left\|b-b_0\right\|_{(B^{2}_{1\infty})^*}$.
Summarising we have
\begin{equation}\label{diffG}
 |G_{\sigma b}-G_{\sigma_0 b_0}|\lesssim  
\left\|\frac{1}{\sigma^2}-\frac{1}{\sigma_0^2}\right\|_{(B^{1}_{1\infty})^*}+ 
\left\|b-b_0\right\|_{(B^{2}_{1\infty})^*}.
\end{equation}

It remains to show
\begin{align*}
&\int_0^1\left(H_{\sigma b}-H_{\sigma_0 
b_0}\right)^2(x,y)\d\mu_0(y)\\
&\lesssim  \|\mu_{\sigma b}-\mu_0\|^2_{L^2([0,1])}
+\left\|\frac{1}{\sigma^2}-\frac{1}{\sigma_0^2}\right\|_{(B^{1}_{1\infty})^*}^2
+ \left\|b-b_0\right\|_{(B^{2}_{1\infty})^*}^2.
\end{align*}
We define
\[
 \tilde H_{\sigma 
b}(x,z)=\int_0^1\left(\1_{[z,x]}(y)-\1_{[z,1]}(y)\int_y^1\d\mu_0 
\right)\exp\left(-\int_0^y\frac{2b(v)}{\sigma^2(v)}\d v\right)\d y
\]
so that we have $H_{\sigma b}(x,z)=\tilde H_{\sigma b}(x,z) 
\mu_{\sigma b}(z)/\mu_0(z)$.
We decompose
\begin{align*}
\int_0^1\left(H_{\sigma b}-H_{\sigma_0 
b_0}\right)^2(x,z)\d\mu_0(z)& \le 2\int_0^1\tilde H_{\sigma 
b}^2(x,z)\left(\frac{\mu_{\sigma b}(z)-\mu_0(z)}{\mu_0(z)}
\right)^2\d\mu_0(z)\\
&~~+2\int_0^1\left(\tilde H_{\sigma b}-\tilde H_{\sigma_0 
b_0}\right)^2(x,z)\d\mu_0(z).
\end{align*}
The first term in the sum can be bounded by a multiple of 
$\|\mu_{\sigma b}-\mu_0\|^2_{L^2([0,1])}$ and we focus now on the second term.

We further decompose $$(\tilde H_{\sigma b}-\tilde H_{\sigma_0 
b_0})(x,z)=(\tilde 
H_{\sigma 
b}-\tilde H_{\sigma_0 b})(x,z)+(\tilde H_{\sigma_0 b}-\tilde H_{\sigma_0 
b_0})(x,z).$$
Using \eqref{expBs} with $f(y)=-\int_0^y\frac{2b(v)}{\sigma^2(v)}\d v$,
$f_0(y)=-\int_0^y\frac{2b(v)}{\sigma_0^2(v)}\d v$ and 
corresponding $h$, a bounded function of bounded variation, we have 
\begin{align*}
& \quad (\tilde H_{\sigma b}-\tilde H_{\sigma_0 b})(x,z) \\ 
&=\int_0^1\Big(\1_{[z,x]}(y)-\1_{[z,1]}(y)\int_y^1 \d\mu_0
\Big)h(y) \int_0^y 2b(v) \Big(\frac{1}{\sigma_0^2(v)}
-\frac{1}{\sigma^2(v)}\Big)\d v\d y \\
&=\int_0^1\int_v^1\Big(\1_{[z,x]}(y)-\1_{[z,1]}(y)\int_y^1d\mu_0
\Big)h(y)\d y 2b(v)\Big(\frac{1}{\sigma_0^2(v)}-\frac{1}{\sigma^2(v)}\Big)\d v.
\end{align*}
The function 
\begin{equation} \label{funkt}
y\mapsto \left(\1_{[z,x]}(y)-\1_{[z,1]}(y)\int_y^1\d\mu_0 \right)h(y)
\end{equation}
is 
of bounded variation and thus contained in 
$B^1_{1\infty}$. Its primitive is contained in~$B^2_{1\infty}$. As above,
$b\in B^1_{\infty\infty}$.  By 
the pointwise multiplier theorem we have that
$$v \mapsto \int_v^1 \left(\1_{[z,x]}(y)-\1_{[z,1]}(y)\int_y^1\d\mu_0 
\right)h(y)\d y 2b(v)$$ is contained in $B^1_{1\infty}$ and the 
whole expression is bounded up to a constant by 
$\|\sigma^{-2}-\sigma_0^{-2}\|_{(B^1_{1\infty})^*}$.

Using again \eqref{expBs} with $f(y)=-\int_0^y\frac{2b(v)}{\sigma_0^2(v)}\d v$,
$f_0(y)=-\int_0^y\frac{2b_0(v)}{\sigma_0^2(v)}\d v$ and the 
corresponding function~$h$, we have 
\begin{align*}
&\quad (\tilde H_{\sigma_0 b}-\tilde H_{\sigma_0 b_0})(x,z)\\
&=\int_0^1\left(\1_{[z,x]}(y)-\1_{[z,1]}(y)\int_y^1\d\mu_0 
\right)h(y)\int_0^y \frac{2}{\sigma_0^2(v)}
\left(
b_0(v)
-b(v)\right)\d v\d y\\
&=\int_0^1\int_v^1\left(\1_{[z,x]}(y)-\1_{[z,1]}(y)\int_y^1\d\mu_0 
\right)h(y)\d y\, \frac{2}{\sigma_0^2(v)}
\left(
b_0(v)
-b(v)\right)\d v.
\end{align*}
As before the primitive with respect to $y$ of (\ref{funkt}) is contained in~$B^2_{1\infty}$. 
Furthermore, 
$\sigma_0^{-2}$ is $C^1$ with first derivative bounded Lipschitz, and hence 
contained in $B^2_{\infty\infty}$. Using again the pointwise multiplier theorem 
we see that 
the expression can be bounded by a constant times 
$\|b_0-b\|_{(B^2_{1\infty})^*}$.

Combining the previous two paragraphs we conclude that
\[
(\tilde H_{\sigma b}-\tilde H_{\sigma_0 b_0})(x,z)\lesssim  
\left\|\frac{1}{\sigma^2}-\frac{1}{\sigma_0^2}\right\|_{(B^{1}_{1\infty})^*}+ 
\left\|b-b_0\right\|_{(B^{2}_{1\infty})^*}
\]
and all terms in the decomposition from the beginning of the proof are treated.

\subsection{Remaining technical results}

\subsubsection{Proof of Theorem \ref{coninequmulti}}

We split the sum into two sums consisting of odd and even indices $j$, and 
prove concentration
inequalities for the two sums separately. The concentration inequality in
theorem follows then by combining the separate concentration inequalities and a
modification of the constant $\kappa$. 

Without loss of generality we only consider even indices, that is, the Markov 
chain
$(X_{2j\Delta},X_{(2j+1)\Delta})_{j\in\N}$ and verify
that it satisfies the assumptions (A\ref{mino})-(A\ref{aperi}). Let
$C=[0,1]^2$, $\nu$ the Lebesgue measure on $[0,1]^2$ and $\tilde\beta$ the
uniform lower bound on the transition probabilities of the original Markov
chain, squared. Then (A\ref{mino}) is satisfied. (A\ref{drift}) is
satisfied with $V$ constant to one and $K=1$. (A\ref{aperi}) is fulfilled
with $\beta=\tilde\beta$. Again using the results in \cite{Baxendale2005} we see that $\tau$ in Theorem~6
in \cite{Adamczak2008} is finite. By Lemma~\ref{contractlem} we have the
contraction property for the Markov
chain $(X_{2j\Delta},X_{(2j+1)\Delta} :j\in\N)$. By
the contraction property the asymptotic variance is bounded by a multiple of
$\|f\|^2_{L^2(\mu_2)}$ and yields the concentration inequality for
$(X_{2j\Delta},X_{(2j+1)\Delta}: j\in\N)$ by
Theorem~6
in \cite{Adamczak2008}.

\subsubsection{Proof of Theorem \ref{thm:tests}}
We define, for $L'>0$ to be chosen
\begin{align*}
 \Psi_n=\left\{
\begin{array}{ll}
 0 & \text{ if }d_n(\hat\theta,\theta_0)< L' \eps_n\\
 1 & \text{ if }d_n(\hat\theta,\theta_0)\ge L' \eps_n.
\end{array}
\right.
\end{align*}
It is a direct consequence of Theorem~\ref{thm:conest} that we have, for $L'$ 
large enough,
\(\E_{\theta_0}\left[\Psi_n\right] \to0\) as $n\to\infty$.
For the error of second type we obtain, for $M$ large enough depending on $L', C$ 
that, again by Theorem~\ref{thm:conest},
\begin{align*}
& \sup_{\vartheta \in \Theta_s: d_n(\theta,\theta_0)\ge M\eps_n}\E_{\theta}\left[1-\Psi_n\right] \\
&=\sup_{\vartheta \in \Theta_s: d_n(\theta,\theta_0)\ge M\eps_n}\PP_\theta\left(d_n(\hat\theta , 
\theta_0)< L' \eps_n\right)\\
&\le\sup_{\vartheta \in \Theta_s: d_n(\theta,\theta_0)\ge M\eps_n}\PP_\theta\left(d_n(\theta_0 , 
\theta)-d_n(\theta , 
\hat\theta)< L' \eps_n\right)\\
&\le \sup_{\vartheta \in \Theta_s}\PP_\theta\left(d_n(\theta, \hat\theta) > 
(M/2)\eps_n \right) \le e^{-(C+4) n \eps_n^2}.
\end{align*}

\subsubsection{Proof of Lemma \ref{besov}}

For Part b) see Proposition~4.3.13 a) in \cite{GineNickl2015}.  

For Part a), we 
can write
\begin{align*}
&\|e^f-e^g\|_{(B^1_{1\infty})^*}  = \sup_{h:\|h\|_{B^1_{1\infty}} \le 1} \left| 
\int h (e^f-e^g)\right| \displaybreak[0] \\
& = \sup_{h:\|h\|_{B^1_{1\infty}} \le 1} \left|\int h e^g (f-g) 
\left(\sum_{k=1}^\infty \frac{(f-g)^{k-1}}{k!} \right) \right|  \lesssim \|f-g\|_{(B^1_{1\infty})^*}
\end{align*}
since the $B^1_{1\infty}$-norm of $h e^g \sum_{k=1}^\infty (f-g)^{k-1}/k!$ is 
bounded by a fixed constant that depends only on $B'$, in view of the 
multiplier 
inequalities $\|h_1h_2\|_{B^1_{1\infty}} \lesssim \|h_1\|_{B^1_{1\infty}} 
\|h_2\|_{B^1_{\infty \infty}}$ and $\|h_1 h_2\|_{B^1_{\infty \infty}} \lesssim 
\|h_1\|_{B^1_{\infty \infty}} \|h_2\|_{B^1_{\infty \infty}}$ (see 
\cite{triebel2010}, p.143, (24)).

Finally, Part c) is proved as follows. Note that for $(\sigma, b) \in \Theta$ we 
have at least that $\sigma', \mu'$ are bounded Lipschitz functions and hence in 
$B^1_{\infty \infty}$. Then from (\ref{hierid}) we see that 
\begin{align*}
\|b-b_0\|_{(B^2_{1\infty})^*} &\lesssim 
\|(\sigma^2)'-(\sigma_0^2)'\|_{(B^2_{1\infty})^*} + \|(\sigma^2-\sigma_0^2) 
(\log \mu)'\|_{(B^2_{1\infty})^*} \\
&\qquad\qquad\qquad\qquad\qquad+  \|\sigma_0^2[\log \mu -\log 
\mu_0]'\|_{(B^2_{1\infty})^*} \\
&\lesssim \|\sigma^2 - \sigma_0^2\|_{(B^1_{1\infty})^*} + \|\log \mu-\log 
\mu_0\|_{L^2},
\end{align*}
where we used integration by parts and $\sigma(0)=\sigma_0(0), 
\sigma(1)=\sigma_0(1)$, similar identities for $\mu, \mu_0$ as well as the fact 
that $\|f'\|_{B^1_{1\infty}} \lesssim \|f\|_{B^2_{1\infty}}, \|fg\|_{B^\alpha_{1 
\infty}} \le \|f\|_{B^\alpha_{1\infty}} \|g\|_{B^\alpha_{\infty \infty}}$ and 
the continuous imbeddings $B^s_{pq} \subset B^{s'}_{pq}$ for all $s>s'$ and 
$B^1_{1\infty} \subset L^2$ (see \cite{triebel2010}). The final result follows by 
multiplying by $\sigma^{-2} \sigma_0^{-2} \in B^2_{\infty \infty}$ and since 
$\log$ is Lipschitz on compact subsets of $(0,\infty)$.

\subsubsection{Lipschitz properties of self-adjoint operators}

\begin{proposition}\label{prop:kitt}
 Let $N$ and $M$ be self-adjoint operators on Hilbert spaces $H_1$ and $H_2$ 
with 
scalar products $\langle\cdot,\cdot\rangle_{H_1}$ and 
$\langle\cdot,\cdot\rangle_{H_2}$, respectively. Let $f$ be a function 
defined on the union $\sigma(N)\cup\sigma(M)$ of the spectra of $N$ and $M$. If 
$|f(z)-f(w)|\le \Lambda |z-w|$ for all $z,w\in\sigma(N)\cup\sigma(M)$ and some 
positive constant~$\Lambda$, then $\|f(N)X-Xf(M)\|_{HS}\le \Lambda  
\|NX-XM\|_{HS}$ for all bounded linear operators $X$ from $H_2$ to $H_1$,
where the Hilbert--Schmidt norm is given by
$\|A\|^2_{HS}=\sum_{l} \|A \tilde e_l\|^2_{H_1}$ for $A:H_2\to 
H_1$ and an orthonormal basis $(\tilde e_l)_l$ of $H_2$. In particular, if 
$H_1$ and $H_2$ are the same with possibly different scalar products 
and 
$X=\Id:H_2\to H_1$ is bounded, then $\|f(N)-f(M)\|_{HS}\le \Lambda  
\|N-M\|_{HS}$, with $HS$-norm defined as above.
\end{proposition}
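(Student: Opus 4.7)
The plan is to exploit the spectral decompositions of the self-adjoint operators $N$ and $M$. In the applications of the paper both operators have purely discrete spectra, so I would first reduce to (or present the proof in) the discrete case, writing $N = \sum_j \lambda_j P_j$ on $H_1$ and $M = \sum_k \mu_k Q_k$ on $H_2$, where the $P_j$ (respectively $Q_k$) are the mutually orthogonal spectral projections corresponding to distinct eigenvalues $\lambda_j \in \sigma(N)$ (resp.\ $\mu_k \in \sigma(M)$) and form resolutions of the identity on $H_1$ (resp.\ $H_2$). By the bounded functional calculus, $f(N) = \sum_j f(\lambda_j) P_j$ and $f(M) = \sum_k f(\mu_k) Q_k$, and these converge strongly.

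Next I would use the block decomposition $X = \sum_{j,k} P_j X Q_k$ (valid as a strongly convergent sum for any bounded $X : H_2 \to H_1$) to express
\[
f(N)X - X f(M) = \sum_{j,k} \bigl(f(\lambda_j) - f(\mu_k)\bigr) P_j X Q_k, \qquad NX - XM = \sum_{j,k} (\lambda_j - \mu_k) P_j X Q_k.
\]
Choosing an orthonormal basis $(\tilde e_l)_l$ of $H_2$ compatible with the decomposition $H_2 = \bigoplus_k Q_k H_2$ (so that each $\tilde e_l$ lies in a single $Q_k H_2$), the vectors $P_j X Q_k \tilde e_l$ for different $j$ land in orthogonal subspaces $P_j H_1$ of $H_1$. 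This gives the Parseval-type identity
\[
\|f(N)X - X f(M)\|_{HS}^2 = \sum_{j,k} |f(\lambda_j) - f(\mu_k)|^2 \, \|P_j X Q_k\|_{HS}^2,
\]
and analogously $\|NX - XM\|_{HS}^2 = \sum_{j,k} (\lambda_j - \mu_k)^2 \|P_j X Q_k\|_{HS}^2$.

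The Lipschitz hypothesis applied pointwise gives $|f(\lambda_j) - f(\mu_k)|^2 \le \Lambda^2 (\lambda_j - \mu_k)^2$ for every pair $(j,k)$, whence a term-by-term comparison yields $\|f(N)X - X f(M)\|_{HS} \le \Lambda \|NX - XM\|_{HS}$. The special case $X = \Id$ follows immediately once one verifies that $\Id : (H, \langle\cdot,\cdot\rangle_{H_2}) \to (H, \langle\cdot,\cdot\rangle_{H_1})$ is bounded, which is the hypothesis of the proposition.

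The main subtlety, and the only point where care is required, is that $H_1$ and $H_2$ may carry different inner products, so the HS-norm on the left is measured using $\langle\cdot,\cdot\rangle_{H_1}$ for the image and an $\langle\cdot,\cdot\rangle_{H_2}$-orthonormal basis for the domain. The orthogonality argument still works because $\{P_j H_1\}_j$ is orthogonal in $H_1$ and $\{Q_k \tilde e_l\}_{k,l}$ is orthonormal in $H_2$; no cross-identification of the two inner products is needed beyond that built into the definition of $\|\cdot\|_{HS}$. If one wanted to allow operators with continuous spectrum, the sums would be replaced by integrals against the spectral measures $E_N, E_M$ and the same argument goes through with the double integral $\iint |f(\lambda)-f(\mu)|^2 \, d\langle E_N(\lambda) X E_M(\mu) \tilde e_l, X E_M(\mu) \tilde e_l\rangle_{H_1}$, but this is not needed for the paper.
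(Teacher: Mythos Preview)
Your argument is correct and, in fact, is essentially the proof of the commutator inequality that the paper only cites. The paper's own proof is different in presentation: it forms the block-diagonal operator $D = \mathrm{diag}(N,M)$ and the off-diagonal $Y = \begin{pmatrix} 0 & X \\ 0 & 0 \end{pmatrix}$ on $H_1 \oplus H_2$, observes that $D$ is self-adjoint for the direct-sum inner product, and then invokes the main theorem of Kittaneh (1985) to obtain $\|f(D)Y - Yf(D)\|_{HS} \le \Lambda \|DY - YD\|_{HS}$, which unwinds to the desired estimate. So the paper reduces the two-space statement to a single-space commutator inequality and outsources the analytic work to a reference, whereas you carry out the spectral-block computation directly. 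Your route is more self-contained and makes transparent why the two different inner products cause no difficulty (orthogonality of the $P_j$ is in $H_1$, of the $Q_k$ in $H_2$, and these never need to interact); the paper's route is shorter on the page and covers continuous spectra without extra comment. For the application in the paper, where both generators have discrete spectrum, the two are equivalent in strength.
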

\begin{proof}
 We define the operators $D$ and $Y$ on the space $H_1\oplus H_2$ by
\begin{align*}
 D=\left(\begin{array}{cc}
N&0\\
0&M\\
\end{array}\right),\quad
 Y=\left(\begin{array}{cc}
0&X\\
0&0\\
\end{array}\right).
\end{align*}
Let $(e_k)_k$ and $(\tilde e_l)_l$ be orthonormal bases of $H_1$ and $H_2$, 
respectively. The scalar product on $H_1\oplus H_2$ is given by $\langle 
f,g\rangle_{H_1 \oplus H_2}=\langle 
f_1,g_1\rangle_{H_1}+\langle 
f_2,g_2\rangle_{H_2}$ for $f=(f_1,f_2)^\top$ and $g=(g_1,g_2)^\top$.
For $A:H_1\oplus H_2\to H_1\oplus H_2$ the Hilbert--Schmidt norm is given by
$\|A\|_{HS}=\sum_{k} \|Ae_k\|_{H_1 \oplus H_2}+\sum_{l}\|A\tilde e_l\|_{H_1 
\oplus H_2}$.
By the main theorem in \cite{Kittaneh1985} we have
\begin{align*}
 \|f(D)Y-Yf(D)\|_{HS}\le\Lambda\|DY-YD\|_{HS}.
\end{align*}
This is equivalent to 
\[
 \|f(N)X-Xf(M)\|_{HS}\le\Lambda\|NX-XM\|_{HS},
\]
where the Hilbert--Schmidt norm is for operators from $H_2$ to $H_1$.
\end{proof}

\textbf{Acknowledgement.} The authors thank an associate editor and two referees for careful reading of and critical remarks on the manuscript, Rados\l{}aw Adamczak for helpful discussions 
on concentration inequalities for Markov chains, and the 
European Research Council (ERC) for support under Grant No. 647812.

\bibliography{bib}


\end{document}